\newcommand{\CC}{\mathcal{C}}
\newcommand{\LL}{\mathcal{L}}
\newcommand{\QQ}{\mathcal{Q}}
\newcommand{\End}{{\rm End}}
\newcommand{\Rep}{{\rm Rep}}
\newcommand{\ot}{\otimes}
\newcommand{\B}{\mathcal{B}}
\newcommand{\iv}{^{-1}}
\newcommand{\unit}{\mathbf{1}}
\newcommand{\Hom}{{\rm Hom}}
\newcommand{\one}{{\rm Id}}
\newcommand{\Z}{\mathbb{Z}}
\newcommand{\Q}{\mathbb{Q}}
\newcommand{\C}{\mathbb{C}}
\newcommand{\R}{\mathbb{R}}
\newcommand{\Aut}{{\rm Aut}}
\newcommand{\ppm}[1]{{\textcolor{green}{#1}}}
\newcommand{\er}[1]{{\textcolor{blue}{#1}}}
\newcommand{\amk}[1]{{\textcolor{purple}{#1}}}
\newcommand{\NB}{\mathcal{NB}}  
\newcommand{\LB}{\mathcal{LB}}
\newcommand{\CB}{\mathcal{CB}} 
\newcommand{\ignore}[1]{}  
\newcommand{\beq}{\begin{equation}}
\newcommand{\eq}{\end{equation}}
\newcommand{\mat}[1]{\left( \begin{array}{#1} }
\newcommand{\tam}{\end{array} \right)}
\newtheorem{theorem}{Theorem}[section]
\newtheorem*{theorem*}{Theorem}
\newtheorem{lemma}[theorem]{Lemma}
\newtheorem{prop}[theorem]{Proposition}
\theoremstyle{definition}
\newtheorem{example}[theorem]{Example}
\newtheorem{question}[theorem]{Question}
\newtheorem{conj}[theorem]{Conjecture}
\newtheorem{cor}[theorem]{Corollary}
\theoremstyle{remark}
\newtheorem{remark}[theorem]{Remark}
\newtheorem{remarks}[theorem]{Remarks}
\numberwithin{equation}{section}
\def\hrad{0.5}
\def\vrad{0.1}
\def\rsep{0.25}
\def\miny{0}
\def\maxy{4}
\def\tension{0.5}
\begin{document}

\title[Representations of the necklace braid group]{Representations of the necklace braid group: topological and combinatorial approaches}
\author{Alex Bullivant$^{1}$, Andrew Kimball$^2$, Paul Martin$^1$, Eric C. Rowell$^2$}

\address{$^1$Department of Pure Mathematics\\
University of Leeds\\Leeds, LS2 9JT\\UK}
\email{A.L.Bullivant@leeds.ac.uk, ppmartin@maths.leeds.ac.uk }
\address{$^2$Department of Mathematics\\
    Texas A\&M University\\
    College Station, TX 77843-3368\\
    U.S.A.}
\email{amkimball1@math.tamu.edu, rowell@math.tamu.edu}

\thanks{ER and AK gratefully acknowledge support under USA NSF grant DMS-1664359, and AB and PM thank EPSRC for support under Grant EP/I038683/1. PM and ER thank Celeste Damiani for useful conversations. }

\keywords{Necklace braid group, braided vector space, TQFT}

\begin{abstract}
The necklace braid group $\NB_n$ is the motion group of the $n+1$ component necklace link $\LL_n$ in Euclidean $\R^3$.  Here $\LL_n$ consists of $n$ pairwise unlinked Euclidean circles each linked to an auxiliary circle.
 Partially motivated by physical considerations, we study representations of the necklace braid group $\NB_n$, especially those obtained as extensions of representations of the braid group $\B_n$ and the loop braid group $\LB_n$.  We show that any irreducible $\B_n$ representation extends to $\NB_n$ in a standard way.  We also find some non-standard extensions of several well-known $\B_n$-representations such as the Burau and LKB representations. Moreover, we prove that any local representation of $\B_n$ (i.e. coming from a braided vector space) can be extended to $\NB_n$, in contrast to the situation with $\LB_n$.  We also discuss some directions for future study from categorical and physical perspectives.
\end{abstract}

\maketitle

\section{Introduction}
Topology, like many fields of mathematics, owes some of its early development to questions arising in physics.  A classic example of this is the development of knot theory: Lord Kelvin and Tait, inspired by experiments of Helmholtz, theorized that atoms were knotted tubes of \ae ther, distinguished by their knot type \cite{Silver}.  This theory was quickly dismissed, but Tait's tabulation of knot projections with few crossings is arguably the dawn of modern knot theory.  

Non-abelian statistics of anyons in two spatial dimensions has attracted considerable attention largely due to topological quantum computation \cite{nayaktqc,wang10}.  Exchanging non-abelian anyons induces unitary representations of the braid group $\B_n$, which can yield braiding-only universal quantum computation models.  Mathematically, this is a rich theory because the braid group acts faithfully on the fundamental group of the punctured plane.  The well-studied framework of $(2+1)$-TQFTs can be used to systematically study these representations and their vast generalization to mapping class groups of punctured surfaces of any genus.

Naturally we would like to extend these ideas to $3$-dimensional topological materials.  Unfortunately $(3+1)$-TQFTs are not as well studied so we cannot obtain as explicit descriptions as in the $2$-dimensional case.  
Instead, here we will study the relevant 
{\em motion groups} \cite{Gold}
and their representations from a more elementary algebraic perspective.

 An extension of non-abelian statistics of point-like excitations to three spatial dimensions is not possible due to the spin-statistics theorem in its exchange statistics formulation: exchanging the positions of two indistinguishable particles changes their state vector by at most a sign.  Mathematically, the motion group of $n$ identical points in $\R^3$ is the symmetric group $\mathfrak{S}_n$ which leads to the possibility of parastatistics.  Notice also that the fundamental group of $\R^3$ with $n$ points deleted is trivial, so that motions of points cannot be detected in this way -- 
 one must consider the framing of paths to explain the spin-statistics theorem topologically.  But in any case points in $3$-dimensions are much less interesting than in $2$-dimensions.  
 
 On the other hand, 
 {\em loop} or closed string excitations occur naturally in condensed matter physics and string theory. 
 The mathematical manifestation of this idea
 was considered in \cite{kmrw,bruetal} with a study of local and low dimension representations of the loop braid group $\LB_n$: the group of motions of $n$ oriented circles in $\R^3$.  
 In this article we consider the related group 
$\NB_n$ 
of motions (up to isotopy)
of a necklace $\mathcal{L}_n$: $n$ unlinked oriented circles that are linked to another auxiliary oriented circle, see Figure \ref{fig1}.  One compelling reason to undertake this study is that such a configuration may be more feasible physically than the free loop picture.  Indeed, a number of proposals in this direction have appeared recently, see \cite{WL,CTW,LinLevin,JMR}.  
 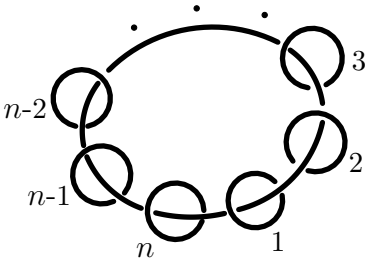
\begin{figure}[h!] 
\begin{tikzpicture}[line cap=round,line join=round,>=triangle 45,x=1.0cm,y=1.0cm,scale=.4]
\clip(3.8,5.3) rectangle (17.22,15.04);
\draw [shift={(7.143409691629958,8.741409691629958)},line width=2.pt]  plot[domain=-0.6715805404685691:5.135259531759732,variable=\t]({1.*0.9665484155614196*cos(\t r)+0.*0.9665484155614196*sin(\t r)},{0.*0.9665484155614196*cos(\t r)+1.*0.9665484155614196*sin(\t r)});
\draw [shift={(9.600193548387093,7.531634408602146)},line width=2.pt]  plot[domain=0.030173215557652316:5.92271473679269,variable=\t]({1.*0.9402344246349372*cos(\t r)+0.*0.9402344246349372*sin(\t r)},{0.*0.9402344246349372*cos(\t r)+1.*0.9402344246349372*sin(\t r)});
\draw [shift={(6.48,11.290319148936172)},line width=2.pt]  plot[domain=-1.3590392941302882:4.500631947720082,variable=\t]({1.*0.9515743370211937*cos(\t r)+0.*0.9515743370211937*sin(\t r)},{0.*0.9515743370211937*cos(\t r)+1.*0.9515743370211937*sin(\t r)});
\draw [shift={(12.250573813249874,7.8885602503912375)},line width=2.pt]  plot[domain=-5.511828625039063:0.19041787825014977,variable=\t]({1.*0.9057982829652452*cos(\t r)+0.*0.9057982829652452*sin(\t r)},{0.*0.9057982829652452*cos(\t r)+1.*0.9057982829652452*sin(\t r)});
\draw [shift={(14.252784810126585,9.825601265822785)},line width=2.pt]  plot[domain=-1.8320151706568284:3.834981442045294,variable=\t]({1.*0.9788063721474181*cos(\t r)+0.*0.9788063721474181*sin(\t r)},{0.*0.9788063721474181*cos(\t r)+1.*0.9788063721474181*sin(\t r)});
\draw [shift={(14.144442413162706,12.623345521023767)},line width=2.pt]  plot[domain=-1.094653172979898:4.566543181398944,variable=\t]({1.*0.9938973913024262*cos(\t r)+0.*0.9938973913024262*sin(\t r)},{0.*0.9938973913024262*cos(\t r)+1.*0.9938973913024262*sin(\t r)});
\draw [shift={(9.12252737006432,10.210945147042423)},line width=2.pt]  plot[domain=2.4985387663157033:3.3309167550133902,variable=\t]({1.*2.6262574945073136*cos(\t r)+0.*2.6262574945073136*sin(\t r)},{0.*2.6262574945073136*cos(\t r)+1.*2.6262574945073136*sin(\t r)});
\draw [shift={(9.549538420223378,10.59818426683103)},line width=2.pt]  plot[domain=3.549895026455385:4.359443518328843,variable=\t]({1.*3.1611172091857687*cos(\t r)+0.*3.1611172091857687*sin(\t r)},{0.*3.1611172091857687*cos(\t r)+1.*3.1611172091857687*sin(\t r)});
\draw [shift={(10.122447072586986,12.345681792593489)},line width=2.pt]  plot[domain=4.473986800724279:4.934235135071265,variable=\t]({1.*5.009811604272178*cos(\t r)+0.*5.009811604272178*sin(\t r)},{0.*5.009811604272178*cos(\t r)+1.*5.009811604272178*sin(\t r)});
\draw [shift={(10.730264302656016,11.302785479095485)},line width=2.pt]  plot[domain=4.967087241105105:6.06853738985942,variable=\t]({1.*3.8388867891986287*cos(\t r)+0.*3.8388867891986287*sin(\t r)},{0.*3.8388867891986287*cos(\t r)+1.*3.8388867891986287*sin(\t r)});
\draw [shift={(11.87801177868133,10.750642452885886)},line width=2.pt] plot[domain=0.14099215481533797:0.9500949355865034,variable=\t]({1.*2.631204653369704*cos(\t r)+0.*2.631204653369704*sin(\t r)},{0.*2.631204653369704*cos(\t r)+1.*2.631204653369704*sin(\t r)});
\draw [shift={(10.836581200432741,8.76693850897955)},line width=2.pt]  plot[domain=1.113237616533615:2.3565726019025384,variable=\t]({1.*4.892498560257611*cos(\t r)+0.*4.892498560257611*sin(\t r)},{0.*4.892498560257611*cos(\t r)+1.*4.892498560257611*sin(\t r)});
\begin{scriptsize}
\draw [fill=black] (8.22,13.64) circle (2.5pt);
\draw [fill=black] (10.3,14.28) circle (2.5pt);
\draw [fill=black] (12.56,14.06) circle (2.5pt);
\end{scriptsize}
\node (a1) at (8.6,5.3) [label=$n$]{};
\node (a2) at (5.4,7) [label=$n$-{\small 1}]{};
\node (a3) at (4.6,9.9) [label=$n$-{\small 2}]{};
\node (a4) at (13,5.5) [label={\small 1}]{};
\node (a5) at (15.6,8.1) [label={\small 2}]{};
\node (a6) at (15.7,11.5) [label={\small 3}]{};
\end{tikzpicture}
  \caption{The necklace $\LL_n$
  as seen from a generic observation point}
 \end{figure}\label{fig1}

 The motion group $\NB_n$,
 the  \emph{necklace braid group}, 
 is described in \cite{BB16}, where it is
 identified with the fundamental group of the configuration space of
 $\mathcal{L}_n$.
 In $\mathcal{L}_n$
 we fix a circle labelled $1$, and 
 order the $n$ circles $1,\ldots,n$ 
 in a counterclockwise fashion;
 and orient the auxiliary circle in the same counterclockwise way.
 The 
 group $\NB_n$ 
 includes elements $\sigma_1,\ldots,\sigma_n,\tau$ 
 where
  $\sigma_i$ is the motion,
  up to homotopy, of passing the $i$th circle through the $i+1$st, while $\tau$ corresponds to shifting each circle one position in the counterclockwise direction.  We use the function convention when composing elements of the motion group: $fg$ means apply $g$ then $f$.
In fact $\NB_n$  is generated by these elements. 

\medskip

Although the realisation of $\NB_n$ as a motion group is fundamental to its utility in physical modelling, 
from a representation theory perspective, manipulating  $\NB_n$ at the `geometric topological' level of its definition as a motion group is relatively hard. Fortunately \cite[Theorem 2.3]{BB16} gives a presentation by abstract generators and relations that facilitates such manipulations. Next we discuss this `combinatorial' realisation. 


 
\begin{theorem}[\cite{BB16}]\label{thm:defining relations}
We have a presentation of a group isomorphic to $\NB_n$ 
by abstract generators 
 $\sigma_1, \ldots, \sigma_n$, $\tau$
satisfying: 
\begin{enumerate}
  \item[(B1)] $\sigma_i\sigma_{i+1}\sigma_i=\sigma_{i+1}\sigma_i\sigma_{i+1}$ 
 \item[(B2)] $\sigma_i\sigma_j=\sigma_j\sigma_i$ for $|i-j|\neq 1\pmod{n}$,
 \item[(N1)] $\tau\sigma_i\tau^{-1}=\sigma_{i+1}$ for $1\leq i\leq n$ 
 \item[(N2)] $\tau^{2n}=1$
 \end{enumerate}
 Here indices are taken modulo $n$,  with $\sigma_{n+1}:=\sigma_1$ and $\sigma_0:=\sigma_n$.
 \qed
 \end{theorem}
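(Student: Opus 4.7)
The strategy is to produce a surjection from the abstractly presented group $\Gamma$ (with the listed generators and relations) onto $\NB_n$, and then show that no further relations are needed. The surjection is straightforward: for the geometric motions, draw pictures. Interpret $\sigma_i$ as the motion that lifts the $i$th ring over the $(i+1)$st ring (threading it once), and $\tau$ as the rigid rotation of the necklace by $2\pi/n$ about the axis of the auxiliary circle. Relation (B1) is the standard Reidemeister-III-type motion between three adjacent beads, (B2) is clear because far-separated rings can be moved independently, and (N1) is just the observation that conjugating a local exchange by a global rotation shifts the index of the exchange. Relation (N2) requires a small computation: $\tau^n$ carries each ring back to its starting position, but because each small ring is linked through the auxiliary circle, it picks up a full $2\pi$ framing rotation; a second application of $\tau^n$ (so $\tau^{2n}$) undoes this framing, giving the trivial motion. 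This establishes a well-defined map $\Gamma \to \NB_n$ that hits the known geometric generators.

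The main task, and the main obstacle, is showing this map is injective. The plan is to exhibit $\NB_n$ as the fundamental group of the configuration space $\mathcal{C}_n$ of ordered, pairwise-disjoint $n$-tuples of unknotted circles each linked once with a fixed auxiliary circle $C_0 \subset \R^3$, quotiented by the full symmetric group on the $n$ beads (cf.\ the configuration-space description in \cite{BB16}). I would deformation-retract $\mathcal{C}_n$ onto the subspace in which each bead is a round circle orthogonal to $C_0$ and centred on $C_0$; this collapses the configuration space onto something homotopy equivalent to an $n$-stranded configuration space on an annulus (or, equivalently, the configuration space of $n$ points on the Klein-bottle–like bundle obtained from rotating a meridian disk around $C_0$, accounting for the framing). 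That reduction would show $\NB_n$ is naturally a quotient of the affine type-$A$ braid group $\widetilde{\B}_n$ of the annulus, which is well known to have the presentation by $\sigma_1,\ldots,\sigma_n,\tau$ with exactly (B1), (B2), (N1), and then identify the single extra relation that kills $\tau^{2n}$ as coming from the framing ambiguity introduced by the Hopf linking with $C_0$.

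Concretely, I would carry out step two by factoring the argument through the comparison map $\widetilde{\B}_n \twoheadrightarrow \NB_n$: use the known presentation of $\widetilde{\B}_n$ (due to tom Dieck / Kent--Peifer) to conclude that the kernel is normally generated by a single element, and then identify that element with $\tau^{2n}$ by computing the image of $\tau^{2n}$ under a faithful representation of $\widetilde{\B}_n$ (for instance a suitable enlargement of the Burau representation, or the action on the first homology of the appropriate cover of the punctured annulus) and showing that under the extra homotopies available when one closes the annulus up along $C_0$ in $\R^3$, precisely $\tau^{2n}$ becomes trivial.

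The hard part is this last identification: ruling out any additional hidden relation. A clean way to do it is to exhibit a faithful representation of the abstract group $\Gamma$ and match its image with an independently computable invariant of $\NB_n$ (for example, writing $\Gamma$ as an explicit semidirect product $\ker(\Gamma \to \Z/2n\Z) \rtimes \Z/2n\Z$ where the kernel is a pure necklace group whose presentation can be read off from the configuration-space fibration over $C_0 \setminus \{\text{bead centres}\}$). Verifying that the geometric pure necklace group matches this kernel, generator for generator, completes the proof.
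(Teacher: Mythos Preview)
The paper does not prove this theorem: the statement is immediately followed by a \qed\ and is attributed to \cite[Theorem~2.3]{BB16}. So there is no in-paper argument to compare your proposal against; the authors simply import the result.

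As for your proposal on its own terms: it is an outline rather than a proof, and you flag this yourself. The surjection $\Gamma\to\NB_n$ is fine. The injectivity half, however, remains a sketch at the two points that matter. First, the deformation retraction of the configuration space of linked circles onto a configuration space of points on an annulus (or on $C_0$) is asserted but not constructed; this is exactly the step that does the work, and it is where one must be careful that no extra $\pi_1$ is created or destroyed. Second, even granting that $\NB_n$ is a quotient of the annular braid group $\CB_n$ (which the paper also records, see the map $\zeta$ in~\eqref{eq:C2N}), you still have to prove that the kernel is \emph{exactly} the normal closure of $\tau^{2n}$. Your proposed mechanisms for this---a faithful representation of $\Gamma$, or a direct match of the pure necklace group against the kernel of $\Gamma\to\Z/2n\Z$---are each substantial projects, not afterthoughts, and neither is carried out. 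In short: the architecture is reasonable and is in the spirit of the configuration-space argument in \cite{BB16}, but the two load-bearing steps (the retraction and the kernel identification) are precisely the content of the theorem and are not supplied here.
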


\newcommand{\BA}{{\mathcal B}\tilde{A}} 

Observe from the presentation 
in Theorem \ref{thm:defining relations} that 
the subgroup generated by the $\sigma_i$ for $1\leq i\leq n-1$ is a quotient of Artin's braid group $\B_n$.  It is not hard to verify that (N1) and (N2) do not induce further relations among $\sigma_1,\ldots,\sigma_{n-1}$, so that, in fact, we have $\B_n<\NB_n$.  

It also follows from the presentation that $\NB_n$ contains a normal subgroup isomorphic to the \textit{affine braid group} (of type $A$) on $n$ strands: 
$
\BA_n
\; \cong \; 
\langle\sigma_1,\ldots,\sigma_n\rangle
\; \lhd \;  \NB_n$.  
Notice that 
$[\NB_n:\BA_{n}]=2n$ and 
$\NB_n=\BA_n\rtimes \langle \tau\rangle$. 
In particular, every element of $\NB_n$ may be written as $\tau^k\beta$ with $\beta\in \BA_n$. 


The \emph{annular} or \emph{circular braid group} $\CB_n$ 
(the fundamental group of the configuration space of $n$ points in an annulus) may be presented
by generators as for $\NB_n$ 
in Theorem~\ref{thm:defining relations}
but omitting relation (N2) (see e.g. \cite{BB16}).
Thus 
$\NB_n$ is isomorphic to a quotient of $\CB_n$. 
\begin{equation} \label{eq:C2N}
\zeta: \CB_n \rightarrow \NB_n
\end{equation}

\medskip

Some of the  relations 
(B1-N2)
for $\NB_n$ are redundant: the following reduces the number of defining relations from $\frac{1}{2}n(n+1)+1$  to $2n-1$.
\begin{lemma}\label{relationslemma}
The relations  (N2), (N1), (B1) for $i=1$ (i.e. $\sigma_1\sigma_2\sigma_1=\sigma_2\sigma_1\sigma_2$), and (B2) for $i=1$ and $3\leq j\leq n-1$ (i.e. $\sigma_1\sigma_j=\sigma_j\sigma_1$ for $3\leq j\leq n-1$), imply all relations of Theorem \ref{thm:defining relations}.
\end{lemma}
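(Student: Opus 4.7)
The plan is to exploit (N1) as a cyclic shift on the indices $1,\ldots,n$ of the generators $\sigma_i$, and generate every missing instance of (B1) and (B2) by conjugating the given base relations by appropriate powers of $\tau$.

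First I would record the following computational consequence of (N1): since $\tau\sigma_i\tau^{-1}=\sigma_{i+1}$ for every $i$ (with indices mod $n$), for any word $w$ in the $\sigma_j$'s the conjugate $\tau^k w\tau^{-k}$ is obtained from $w$ by replacing each $\sigma_j$ by $\sigma_{j+k}$. In particular, any equation among such words yields, after conjugation by $\tau^k$, the same equation with all indices cyclically shifted by $k$. So (N2) and (N1) together give me a free cyclic shift on any relation I already possess.

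Next, to derive (B1) in full: starting from the given $\sigma_1\sigma_2\sigma_1=\sigma_2\sigma_1\sigma_2$ and conjugating by $\tau^{i-1}$ for $i=1,\ldots,n$, I obtain $\sigma_i\sigma_{i+1}\sigma_i=\sigma_{i+1}\sigma_i\sigma_{i+1}$ for every $i$, including the ``cyclic'' boundary instance $\sigma_n\sigma_1\sigma_n=\sigma_1\sigma_n\sigma_1$. For (B2), I begin with the base commutations $\sigma_1\sigma_j=\sigma_j\sigma_1$ for $j\in\{3,\ldots,n-1\}$ and conjugate by $\tau^{i-1}$ to get $\sigma_i\sigma_{i+j-1}=\sigma_{i+j-1}\sigma_i$. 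As $(i,j)$ varies, the offset $j-1$ ranges over $\{2,\ldots,n-2\}$ while $i$ shifts the base position, so the resulting pairs $(a,b)\pmod n$ sweep out exactly the pairs with cyclic distance at least $2$, which is precisely the condition $|a-b|\not\equiv\pm 1\pmod n$ needed for (B2).

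I do not foresee a serious obstacle: the only real content is the indexing check that the cyclic shifts in the last step actually cover the full set of required commutation pairs, and this follows from $\{j-1:3\leq j\leq n-1\}=\{2,\ldots,n-2\}$ together with the surjectivity of $i\mapsto i+k\pmod n$. Note that (N2) plays no role in the derivations of (B1) and (B2); it is simply one of the relations kept in the minimal presentation.
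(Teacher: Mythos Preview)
Your proposal is correct and follows essentially the same approach as the paper: both arguments use (N1) to conjugate the base instances $\sigma_1\sigma_2\sigma_1=\sigma_2\sigma_1\sigma_2$ and $\sigma_1\sigma_j=\sigma_j\sigma_1$ by powers of $\tau$, thereby cyclically shifting the indices to obtain all instances of (B1) and (B2). The paper writes this out as explicit strings of equalities (substituting $\sigma_i=\tau^{i-1}\sigma_1\tau^{-(i-1)}$), whereas you invoke the general principle that conjugation by $\tau^k$ shifts all indices by $k$; the content is identical, and your index check that the offsets $j-1\in\{2,\ldots,n-2\}$ cover exactly the required cyclic distances matches the paper's verification.
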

\begin{proof}
Assuming (N1) gives us $\tau^{i-1}\sigma_1\tau^{-i+1}=\sigma_i$ for all $i$ where indices are taken modulo $n$. Thus $\sigma_1\sigma_2\sigma_1=\sigma_2\sigma_1\sigma_2$ implies that for any $i$:
\begin{eqnarray*}
\sigma_i\sigma_{i+1}\sigma_i & = & (\tau^{i-1}\sigma_1\tau^{-i+1})(\tau^{i}\sigma_1\tau^{-i})(\tau^{i-1}\sigma_1\tau^{-i+1})\\ & = & \tau^{i-1}\sigma_1\tau^{1}\sigma_1\tau^{-1}\sigma_1\tau^{-i+1}\\
  & = & \tau^{i-1}\sigma_1\sigma_2\sigma_1\tau^{-i+1}\\ & = & \tau^{i-1}\sigma_2\sigma_1\sigma_2\tau^{-i+1}\\
& = & \tau^{i}\sigma_1\tau^{-1}\sigma_1\tau^{1}\sigma_1\tau^{-i}\\ & = & (\tau^{i}\sigma_1\tau^{-i})(\tau^{i-1}\sigma_1\tau^{-i+1})(\tau^{i}\sigma_1\tau^{-i})\\
& = & \sigma_{i+1}\sigma_i\sigma_{i+1}. 
\end{eqnarray*}

 Next we verify (B2) assuming $\sigma_1$ commutes with $\sigma_k$ with $3\leq k\leq n-1$.
We may assume $n\geq j>i>1$ and $|j-i|\not\equiv1\pmod{n}$.
\begin{eqnarray*}
  \sigma_i\sigma_j & = & \tau^{i-1}\sigma_1\tau^{-i+1}\tau^{i-1}\sigma_{j-i+1}\tau^{-i+1}\\
 & = & \tau^{i-1}\sigma_1\sigma_{j-i+1}\tau^{-i+1}\\ & = & \tau^{i-1}\sigma_{j-i+1}\sigma_1\tau^{-i+1}\\
 & = & \tau^{i-1}\sigma_{j-i+1}\tau^{-i+1}\tau^{i-1}\sigma_1\tau^{-i+1}  = \sigma_j\sigma_i
\end{eqnarray*}
\end{proof}

Another easy observation is that $\sigma_i\mapsto (i\;i+1)$ (modulo $n$, so that $\sigma_n\mapsto(n\;1)$) and $\tau\rightarrow (1\;2\;\cdots\;n)$ gives a surjection $\varphi: \NB_n\rightarrow S_n$.  The kernel of $\varphi$ is the normal subgroup of motions that carry each circle back to their original positions: the \emph{pure necklace braids}.

The motion group of $n$ unlinked oriented circles in $\R^3$, i.e. the Loop braid group $\LB_n$ studied in \cite{BaezCransWise} does not contain the necklace braid group, but it does contain a quotient of $\NB_n$ by a central subgroup of order 2 as we will see.  This relationship is explored in section 2.

\subsection{Overview of paper}

In \S\ref{ss:fromB} we lay down some basic facts about the relationships between $\NB_n$ and various other topologically constructed groups.  In particular we develop a number of ways to construct representations of $\NB_n$ 
both by extending from $\B_n$ representations or by factoring through representations of the loop braid group.  These ideas are applied in \S\ref{s:extensions} to many well-known examples of $\B_n$ representations, while in \S\ref{ss:LR} we focus on extending \emph{local} $\B_n$ representations.  It turns out to be much easier to construct local representations of $\NB_n$ than $\LB_n$.  
In \S\ref{ss:low-d} we address the `brute' algebraic 
analysis of 
low-dimensional representations
(in the spirit of \cite{bruetal,LRagt,Formanek}).
In \S\ref{ss:BFC} we 
return closer to physical considerations. We
discuss the construction of representations from braided fusion categories; and  
in \S\ref{ss:XXZ} we explore more direct physical manifestations of $\NB_n$ using `categorified' quantum spin chains.

\section{$\NB_n$ representations from $\B_n$ and $\LB_n$ representations}
\label{ss:fromB}
The necklace braid group is closely related to both the ordinary braid group $\B_n$ and the loop braid group $\LB_n$, and both provide a rich source of representations.  In this section we explore these relationships.

\subsection{Relationship with $\B_n$} \label{ss:B2N}
As noted, the braid group $\B_n$ on $n$ strands 
is isomorphic to the subgroup of
$\NB_n$ generated by $\sigma_1,\ldots, \sigma_{n-1}$.  Notice that the
absence of $\sigma_n$ and $\tau$ obviates the consideration of indices
modulo $n$ in relations (B1) and (B2).  In particular, any
representation of $\NB_n$ restricts to a representation of $\B_n$. 

In $\B_n$ define the {\em single twist}
\beq \label{eq:twist}
\gamma= \sigma_1\cdots\sigma_{n-1}. 
\eq
The center of $\B_n$ is generated by the the full twist of the $n$
strands: $\gamma^n=(\sigma_1\cdots\sigma_{n-1})^n$.  

$\B_n$ can be generated by 
$
\gamma 
$
and $\sigma_1$, as can be seen from the useful:
\begin{equation}\label{conjugator}
 \gamma^{k}\sigma_1\gamma^{-k}=\sigma_{k+1},\quad 1\leq k\leq n-2.
\end{equation}
Defining
\beq \label{eq:sigman}
\sigma_{n}^\prime:=\gamma^{n-1}\sigma_1\gamma^{1-n}=\gamma^{-1}\sigma_1\gamma=\gamma\sigma_{n-1}\gamma^{-1}
\eq
\ignore{{
\ppm{this is perhaps notationally questionable - we essentially identified 
$\B_n$ with the subgroup in $\NB_n$; but we certainly don't identify $\sigma_n$. perhaps call it something slightly different? $\sigma'_n$?} \er{Although $\sigma_n$ doesn't exist in $\B_n$ so there is no confusion I agree it is worth emphasizing.}
}}%
we find that $\sigma_1,\ldots,\sigma_{n-1},\sigma^\prime_n$ 
satisfy the (modulo $n$)
relations (B1) and (B2) above.  Setting $\tau=\gamma$ we also verify
(N1), but not (N2) in general.  However, for certain representations
we can take advantage of this close relationship to produce
representations of $\NB_n$: 

\begin{lemma} \label{lem:1}
  Let $\rho:\B_n\rightarrow GL(V)$ be any indecomposable
  finite-dimensional  representation of $\B_n$ such that $\rho(\gamma^{2n} ) = c_\rho \one_V$  for some scalar $c_\rho$ (for example any irreducible $\rho$). Then $\rho$ extends to an indecomposable representation of $\NB_n$ by
  $
  \rho(\sigma_n) = \rho(\gamma \sigma_{n-1} \gamma^{-1} )
  $
  and 
 \[
 \rho (\tau ) = 
                 {(c_\rho)^{-1/2n}} \rho(\gamma)  .
 \]

\end{lemma}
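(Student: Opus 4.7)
The plan is to verify that the assignments
$\rho(\sigma_n) := \rho(\gamma\sigma_{n-1}\gamma^{-1})$ and $\rho(\tau) := c_\rho^{-1/(2n)}\rho(\gamma)$ satisfy the defining relations of $\NB_n$, and then to observe that indecomposability passes automatically from $\B_n$ to $\NB_n$. By Lemma \ref{relationslemma}, I only need to check (N2), (N1), the single braid relation (B1) for $i=1$, and the commuting relations (B2) for $i=1$, $3\le j\le n-1$. The latter two families involve only $\sigma_1,\ldots,\sigma_{n-1}$, so they hold automatically because $\rho$ is already a representation of $\B_n$.

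For (N1), first note that the scalar in the definition of $\rho(\tau)$ cancels in any conjugation, so checking (N1) reduces to verifying $\gamma\sigma_i\gamma^{-1}=\sigma_{i+1}$ inside (the image of) $\B_n$. For $1\le i\le n-1$ I would use (\ref{conjugator}), which gives $\sigma_i = \gamma^{i-1}\sigma_1\gamma^{-(i-1)}$, so that $\gamma\sigma_i\gamma^{-1}=\gamma^i\sigma_1\gamma^{-i}=\sigma_{i+1}$ (the case $i=n-1$ is exactly the definition of $\sigma_n$). The remaining case $i=n$ (requiring $\tau\sigma_n\tau^{-1}=\sigma_1$ under the modular convention) is the one step that really uses extra structure of $\B_n$: writing $\sigma_n=\gamma^{n-1}\sigma_1\gamma^{-(n-1)}$ gives
\[
\gamma\,\sigma_n\,\gamma^{-1}=\gamma^n\sigma_1\gamma^{-n}=\sigma_1,
\]
since the full twist $\gamma^n$ is central in $\B_n$. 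This is the only place where a nontrivial fact about $\B_n$ (beyond the braid relations) is invoked, and I expect it to be the main conceptual obstacle.

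For (N2), a direct computation gives
\[
\rho(\tau)^{2n}=c_\rho^{-1}\rho(\gamma)^{2n}=c_\rho^{-1}\rho(\gamma^{2n})=c_\rho^{-1}\cdot c_\rho\,\one_V=\one_V,
\]
using the hypothesis $\rho(\gamma^{2n})=c_\rho\one_V$ (and choosing any $2n$-th root of $c_\rho$ in $\C$). When $\rho$ is irreducible, the hypothesis itself is automatic: since $\gamma^n$ is central in $\B_n$, so is $\gamma^{2n}$, and Schur's lemma forces $\rho(\gamma^{2n})$ to be a scalar.

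Finally, indecomposability of the extension is immediate: any $\NB_n$-invariant direct-sum decomposition of $V$ is in particular $\B_n$-invariant, contradicting the assumed indecomposability of $\rho|_{\B_n}$. Thus the four ingredients (reduction via Lemma \ref{relationslemma}, the conjugation identity (\ref{conjugator}) together with centrality of $\gamma^n$, the scalar computation for (N2), and the trivial observation on indecomposability) fit together to complete the proof.
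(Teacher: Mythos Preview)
Your proof is correct and follows essentially the same approach as the paper: the paper simply declares (B1), (B2), (N1) ``immediate'' (having established the conjugation identities (\ref{conjugator}), (\ref{eq:sigman}) and the centrality of $\gamma^n$ just before the lemma) and then computes (N2) exactly as you do. Your version is more explicit---organizing the verification via Lemma~\ref{relationslemma} and noting the indecomposability argument---but the substance is identical.
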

\begin{proof}
Since $\gamma^n$ is central, $\rho(\gamma^n ) \propto 1$ by Schur's
Lemma. Relations (B1), (B2) and (N1) are immediate.
For (N2):
$$
\rho(\tau^{2n} ) = \left(  {(c_\rho)^{-1/2n}} \rho(\gamma)   \right)^{2n}
                = \frac{1}{c_\rho} \rho(\gamma^{2n}) = Id_V  .
$$
\end{proof}
 In fact, since the braid relations are homogeneous
 we can rescale the $\rho(\sigma_i)$s by $\kappa\neq 0$ to obtain a new representation
 $\rho'(\sigma_i) = \kappa\rho(\sigma_i)$ of $\B_n$. 
 Setting $\rho'(\tau)=\rho'(\gamma)$ the rescaling
 will not affect (B1), (B2), (N1), but can be chosen to cancel the scalar in
 $\rho(\gamma^{2n})$, and then we may define $\rho(\sigma_n)$ as above.

Now let $\rho$ be any finite-dimensional representation of $\B_n$. It
is a direct sum of indecomposable representations, $\rho_i$ say, and there is a
projection to each of these in $\End_{\B_n}(V)$.
The sum of the projections each individually rescaled according to
Lemma~\ref{lem:1} gives a $\rho(\tau)$ and hence a  representation of $\NB_n$.
In particular:

\begin{theorem}\label{stdexn}
Let $\rho:\B_n\rightarrow GL(V)$ be any completely reducible 
complex representation of $\B_n$ (for example any unitary or irreducible representation).  Then there exists a $D\in\End_{\B_n}(V)$ such that defining $$\rho(\tau)=D\rho(\gamma), \quad \rho(\sigma_n)=\rho(\gamma)\rho(\sigma_{n-1})\rho(\gamma^{-1})$$ is a representation of $\NB_n$.
 
\end{theorem}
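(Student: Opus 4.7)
The plan is to apply Lemma~\ref{lem:1} summand-wise, using complete reducibility to guarantee that every indecomposable constituent is irreducible and therefore meets the scalar hypothesis on $\gamma^{2n}$.

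\textbf{Step 1: decompose.} Since $\rho$ is completely reducible, write $V=\bigoplus_i V_i$ as an internal direct sum of $\B_n$-irreducible subrepresentations, with the corresponding idempotent projections $P_i\in\End_{\B_n}(V)$. Since $\gamma^n$ is central in $\B_n$, Schur's lemma gives $\rho(\gamma^n)|_{V_i}=d_i\one_{V_i}$ for some $d_i\in\C^{\times}$, hence $\rho(\gamma^{2n})|_{V_i}=c_i\one_{V_i}$ with $c_i=d_i^2$. Choose, for each $i$, a $2n$-th root $c_i^{1/2n}$.

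\textbf{Step 2: build $D$ and extend.} Define
\[
D \;=\; \sum_i c_i^{-1/2n}\,P_i \;\in\; \End_{\B_n}(V),
\]
and set $\rho(\tau):=D\rho(\gamma)$ together with $\rho(\sigma_n):=\rho(\gamma)\rho(\sigma_{n-1})\rho(\gamma^{-1})$. Because $D$ is built from $\B_n$-equivariant projections, $D$ commutes with every $\rho(\gamma^k)$ and with every $\rho(\sigma_i)$; in particular $D$ and $\rho(\gamma)$ commute.

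\textbf{Step 3: verify the relations.} For (B1) and (B2) among $\sigma_1,\ldots,\sigma_{n-1}$ there is nothing to do. The mixed relations involving $\sigma_n$ follow from the standard conjugation identity~\eqref{conjugator}: combined with centrality of $\gamma^n$ one obtains $\rho(\gamma)\rho(\sigma_i)\rho(\gamma^{-1})=\rho(\sigma_{i+1})$ for every $i\in\{1,\ldots,n\}$ (with indices mod~$n$, using $\sigma_n=\gamma\sigma_{n-1}\gamma^{-1}$ and $\gamma^n$ central to handle the wrap-around $\gamma\sigma_n\gamma^{-1}=\sigma_1$). Hence conjugation by $\rho(\gamma)$ cyclically permutes $\rho(\sigma_1),\ldots,\rho(\sigma_n)$, which immediately yields (B1) and (B2) in full. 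For (N1),
\[
\rho(\tau)\rho(\sigma_i)\rho(\tau)^{-1}
 = D\rho(\gamma)\rho(\sigma_i)\rho(\gamma)^{-1}D^{-1}
 = \rho(\gamma)\rho(\sigma_i)\rho(\gamma)^{-1}
 = \rho(\sigma_{i+1}),
\]
since $D$ commutes with everything. For (N2), commutativity of $D$ and $\rho(\gamma)$ gives $\rho(\tau)^{2n}=D^{2n}\rho(\gamma)^{2n}$, and on each $V_i$ this equals $c_i^{-1}\cdot c_i\,\one_{V_i}=\one_{V_i}$, so $\rho(\tau)^{2n}=\one_V$.

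\textbf{Where the real content sits.} The only place anything substantive happens is the passage from an arbitrary completely reducible $\rho$ to indecomposable blocks where $\gamma^{2n}$ is a scalar; everything else is bookkeeping transcribed from Lemma~\ref{lem:1}. The one mild subtlety is making sure the rescaling on the different isotypic components is assembled into a single $\B_n$-endomorphism $D$ rather than merely a family of scalars, which is exactly what the projections $P_i$ accomplish. Note also that $D$ is not unique: any choice of $2n$-th roots $c_i^{1/2n}$ (or more generally any $D\in\End_{\B_n}(V)$ whose $2n$-th power inverts $\rho(\gamma^{2n})$ blockwise) works, reflecting the freedom in how the extension is made.
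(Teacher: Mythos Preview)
Your proof is correct and follows essentially the same approach as the paper: decompose $V$ into irreducibles, use Schur's lemma on the central element $\gamma^{2n}$ to obtain scalars $c_i$, and assemble the blockwise rescalings $c_i^{-1/2n}$ into a single $D\in\End_{\B_n}(V)$. The paper is terser, deferring the verification of the $\NB_n$ relations to ``the above discussion'' (i.e.\ Lemma~\ref{lem:1} and \eqref{conjugator}--\eqref{eq:sigman}), whereas you spell out (B1), (B2), (N1), (N2) explicitly; but the content is the same.
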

\begin{proof}
By complete reducibility, for some set of irreducibles 
$\{ W_i \}_i$ we have $V\cong\bigoplus_i W_i$.
Since $\gamma^{2n}$ is central in $\B_n$, we have 
$\rho\mod_{W_i}(\gamma^{2n})=c_i\one_{W_i}$ for some $c_i\in\C$.  
Define 
$D=\bigoplus_i(c_i)^{\frac{-1}{2n}} \one_{W_i}
\in\End_{\B_n}(V)$ 
so that $(D\rho(\gamma))^{2n}=\one_V$. 
Since $D$ commutes with the operators $\rho(\sigma_i)$ for $1\leq i\leq n-1$, it also commutes with $\rho(\gamma)$. 
Hence defining $\rho(\tau)=D\rho(\gamma)$ and $\rho(\sigma_n)=\rho(\gamma^{-1}\sigma_1\gamma)$ we see by the above discussion that all $\NB_n$ relations are satisfied and hence these assignments extend $\rho$ to $\NB_n$.
\end{proof}

This approach bears some similarity with the notion of a {standard extension} found in \cite{bruetal}, so we adopt this nomenclature and refer to any representation $\rho$ of $\NB_n$ with $\rho(\tau)=A\rho(\gamma)$ and $[A,\rho(\sigma_i)]=1$ for all $i$, a \textbf{standard extension}.  Since $A=\rho(\tau\gamma^{-1})$, this operator is already in the image of $\rho$, so that $\rho(\NB_n)$ is generated by $\rho(\B_n)$ and $A$.  In particular, $\rho(\NB_n)$ is a central product of the cyclic group generated by $\rho(\B_n)$ and the central operator $A$.  From a topological perspective standard extensions do not fully exploit the $(3+1)$-dimensional nature of $\NB_n$, rather the interesting information they carry is already present in the braid group $\B_n$, which is related to $(2+1)$-dimensional topology.

\begin{remark}
By Schur's lemma a standard extension of an irreducible $\B_n$ representation has the form $\rho(\tau)=\lambda\rho(\gamma)$, where $\lambda$ is a scalar.  On the other hand, the standard extensions fit into a more general construction, observing that a representation $\varphi$ of $\B_n$ lifts trivially to $\CB_n$ by letting $\tau\in\CB_n$ act by $\varphi(\gamma)$.  Indeed, if $G=F\ltimes \langle y\rangle$ is a semi-direct product in which $y$ acts by conjugation and $y^n$ is central for some $n$ then any representation of $G$ for which the image of $y^n$ is semisimple (diagonalizable) factors over the quotient $G/\langle y^n\rangle$.  It is conceivable that more interesting representations of $\NB_n$ can be obtained from $\CB_n$ in this way, cf. \S\ref{ss:XXZ} 
\end{remark}

\begin{remark} \label{rem:indec}
Failure of complete reducibility does not preclude the existence of standard extensions: the representation of $\B_n$ defined by $\rho(\sigma_i)=J=\begin{pmatrix}1&1\\0&1
\end{pmatrix}$ for all $i$ is not completely reducible, yet $\rho(\gamma)=J^{n-1}$ commutes with $\rho(\sigma_i)$ so that $\rho(\tau)=Id=(J^{1-n})\rho(\gamma)$ is a standard extension.
\end{remark}

Of course our existence Theorem 
does not tell us how to {\em  construct} standard extensions. In this setting there is 
a computational distinction to be made between individual values of $n$ (see \S\ref{ss:low-d}), and a construction that 
starts with a braid group representation for all $n$ and 
determines a $D$ for each $n$. 
We will touch on the latter problem in \S\ref{ss:XXZ},
where we use integrable spin-chain methods.

Modular categories are a rich source of representations of $\B_n$, where motions of points in a disk lead to $\B_n$ representations on the Hilbert spaces obtained from the corresponding $(2+1)$TQFT.  Categorical constructions of $(3+1)$TQFTs suggest that there should also be a way to obtain representations of $\NB_n$ and other motion groups of 1-dimensional submanifolds of 3-manifolds, by acting on appropriate morphism spaces.

\subsection{Relationship between $\NB_n$ and $\LB_n$} $\;$ 
\label{ss:LB}



The motion class $\sigma_i$ in $\NB_n$ 
can be implemented if the auxiliary circle is absent
as can the motion $\tau$.   This suggests a relationship between $\NB_n$ and the loop braid group $\LB_n$ associated with the motions of an array of $n$ \emph{unlinked} loops in $\R^3$ studied in \cite{BaezCransWise}.  

\medskip

It will be useful to consider for a moment a groupoid 
$\Gamma$ where $\NB_n$ and $\LB_n$ both belong (cf. e.g. \cite[Ch.2]{CrowellFox}). 
A {\em link} is an  embedding of some number of copies of the circle $S^1$ in $\R^3$
(hence a certain 1d submanifold of $\R^3$).
For example consider the link $\LL_n$ in Fig.\ref{fig1}.
Another example is the $n$-component unlink.
Let $a,b$ be two such embeddings. 
A motion of a link (in the motion set $\hom(a,b)$) 
is a smooth variation of one such embedding
$a$ into another $b$ over an interval of time
--- a 2d submanifold of $\R^3 \times [0,1] \subset \R^4$
`starting' at $a \subset \R^3 \times \{ 0 \}$ and 
`ending' at $b  \subset \R^3 \times \{ 1 \}$.
We may combine compatible motions in the obvious way \cite[Ch.2 \S1]{CrowellFox}.

Two motions $\gamma,\gamma'$ from $a$ to $b$ 
are {\em equivalent} if
there is a continuous family
\cite[Ch.2 \S2]{CrowellFox}
in $\hom(a,b)$ starting in $\gamma$ and
ending in $\gamma'$.
Under this equivalence the classes of $\hom(a,a)$ 
(denoted $\Gamma_a = \hom_{\sim}(a,a)$) form a group.
The classes of all motions form a groupoid $\Gamma$ with object set the set of
links.
Note that 
\begin{lemma} \label{lem:isofd}
Two groups  
$\Gamma_a$, $\Gamma_b$ are isomorphic if the object
links are the same topological link.
\end{lemma}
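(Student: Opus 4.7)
The plan is to exploit the fact that $\Gamma$ is a groupoid: in any groupoid, a morphism $[\phi]\in\hom_{\sim}(a,b)$ induces a group isomorphism $\Gamma_a\to\Gamma_b$ by conjugation, $[\alpha]\mapsto [\phi][\alpha][\phi]^{-1}$. So the lemma reduces to producing a single morphism in $\hom_{\sim}(a,b)$ whenever $a$ and $b$ represent the same topological link.

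First I would verify that $\Gamma$ is indeed a groupoid. Every motion $\gamma$ from $a$ to $b$ has a time-reversal $\bar{\gamma}$ from $b$ to $a$, and the concatenations of $\gamma$ with $\bar{\gamma}$ (in either order) are equivalent to constant motions via the standard back-and-forth reparametrisation homotopy. One also needs that composition of motions respects the equivalence relation, so that composition descends to $\hom_{\sim}$; this follows by concatenating continuous families of motions in $\hom(a,b)$ and $\hom(b,c)$ to obtain a continuous family in $\hom(a,c)$.

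Next comes the key geometric input: that $a$ and $b$ are ``the same topological link'' means there is an ambient isotopy, i.e.\ a smooth map $H\colon \R^3\times[0,1]\to\R^3$ with each $H_t$ a diffeomorphism, $H_0=\mathrm{id}$, and $H_1(a)=b$. The family $t\mapsto H_t(a)$ sweeps out a $2$-dimensional submanifold of $\R^3\times[0,1]$ starting at $a$ and ending at $b$, which is precisely a motion in $\hom(a,b)$ in the paper's sense. Passing to its equivalence class and then applying the conjugation recipe above delivers the desired isomorphism $\Gamma_a\cong\Gamma_b$.

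The only real obstacle is unpacking what ``same topological link'' is taken to mean. In the usual oriented smooth category the required ambient isotopy exists by definition, and the argument goes through without modification; if orientation-reversing identifications were permitted, some additional care would be needed, since not every self-homeomorphism of $\R^3$ is realisable as a motion. For the applications in the sequel (where $\LL_n$ and the $n$-component unlink are the only links in sight) the oriented version suffices.
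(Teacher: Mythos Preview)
Your argument is correct and is precisely the standard groupoid argument: any arrow in $\hom_\sim(a,b)$ conjugates $\Gamma_a$ isomorphically onto $\Gamma_b$, and an ambient isotopy realising the topological equivalence of $a$ and $b$ supplies such an arrow.

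As for comparison with the paper: there is nothing to compare against, since the paper states Lemma~\ref{lem:isofd} without proof. The surrounding discussion (with its references to \cite{CrowellFox}) sets up exactly the groupoid framework you use, so the authors evidently regard the lemma as an immediate consequence of that setup --- which is what your proposal spells out. Your remark about orientation is a reasonable caveat, though in the paper's context ``same topological link'' is understood in the oriented sense and the issue does not arise.
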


The loop braid group $\LB_n$ is the motion group 
$\hom_{\sim}(a,a)$ for any $a$ that is topologically the $n$-unlink.
We may visualize $\LB_n$ as follows: arrange the $n$ loops as circles in the $xy-$plane along the $x$-axis and label them $1,\ldots, n$:  
\[
a \;\;\; = \;\;\;\;\; 
\includegraphics[width=3in]{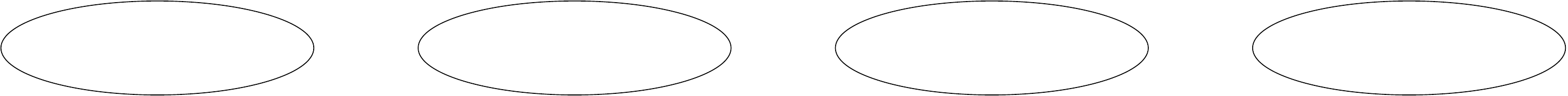}
\]
\begin{remark}
Consider for comparison
the $n$-loop arrangement
obtained as follows. Starting from a single circle, add further circles that are rotations of it about an axis in the same plane but exterior to the circle:
\begin{equation} \label{eq:b mode}
b \;\;\; = \;\;\;\;\; 
\raisebox{-.1in}{\includegraphics[width=1.03in]{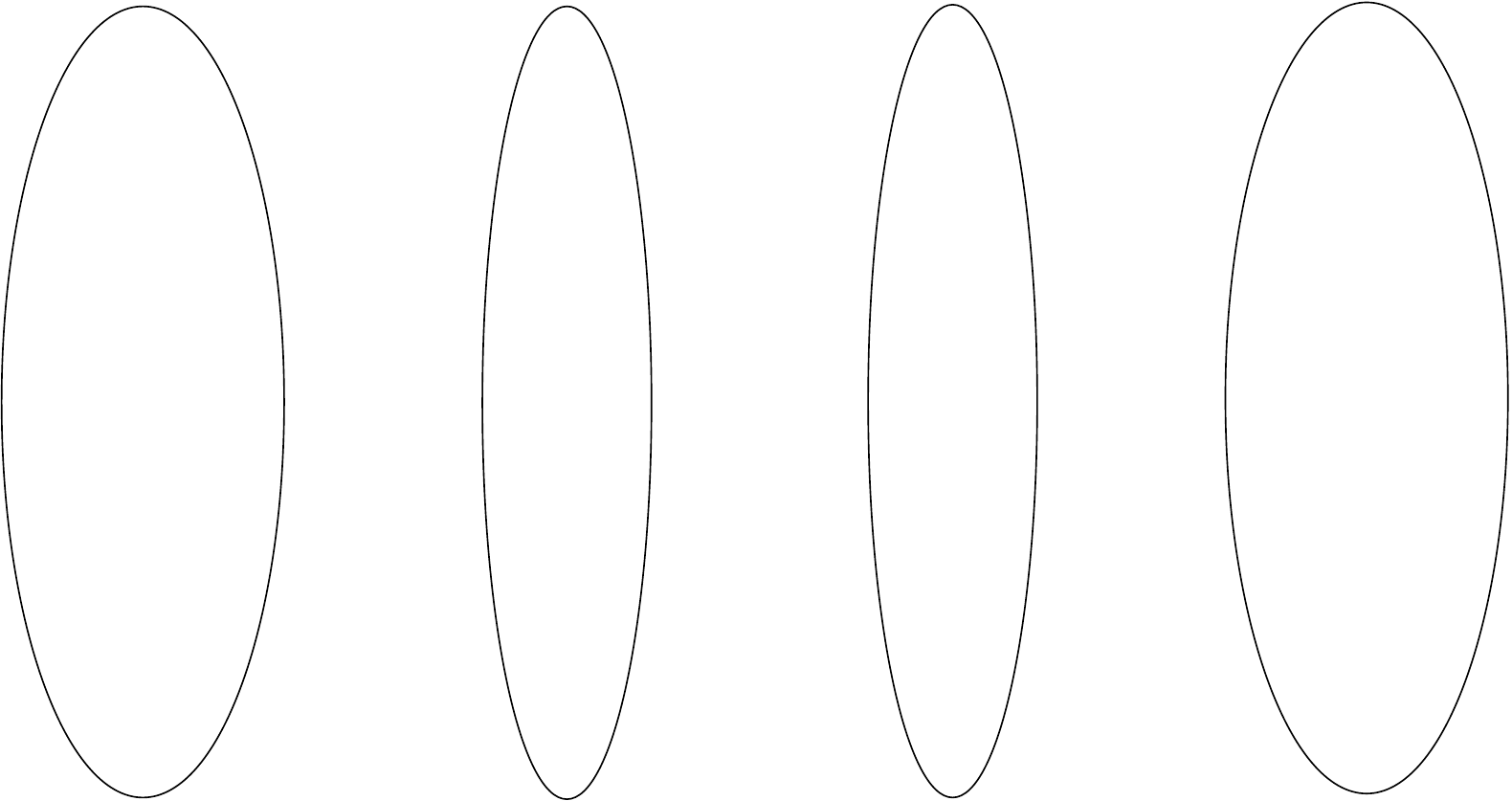}}
\end{equation}
This is then precisely as in $\LL_n$, but with the linking loop omitted.
It will be clear, cf. e.g. \cite{CrowellFox}, that
this leads to a group $\Gamma_b$ isomorphic  to $\Gamma_a$.
\end{remark}

Keeping with arrangement $a$ above, 
let $s_i$ denote the interchange of loops $i$ and $i+1$ via representative  motions like this: 
\[
\includegraphics[width=4.5cm]{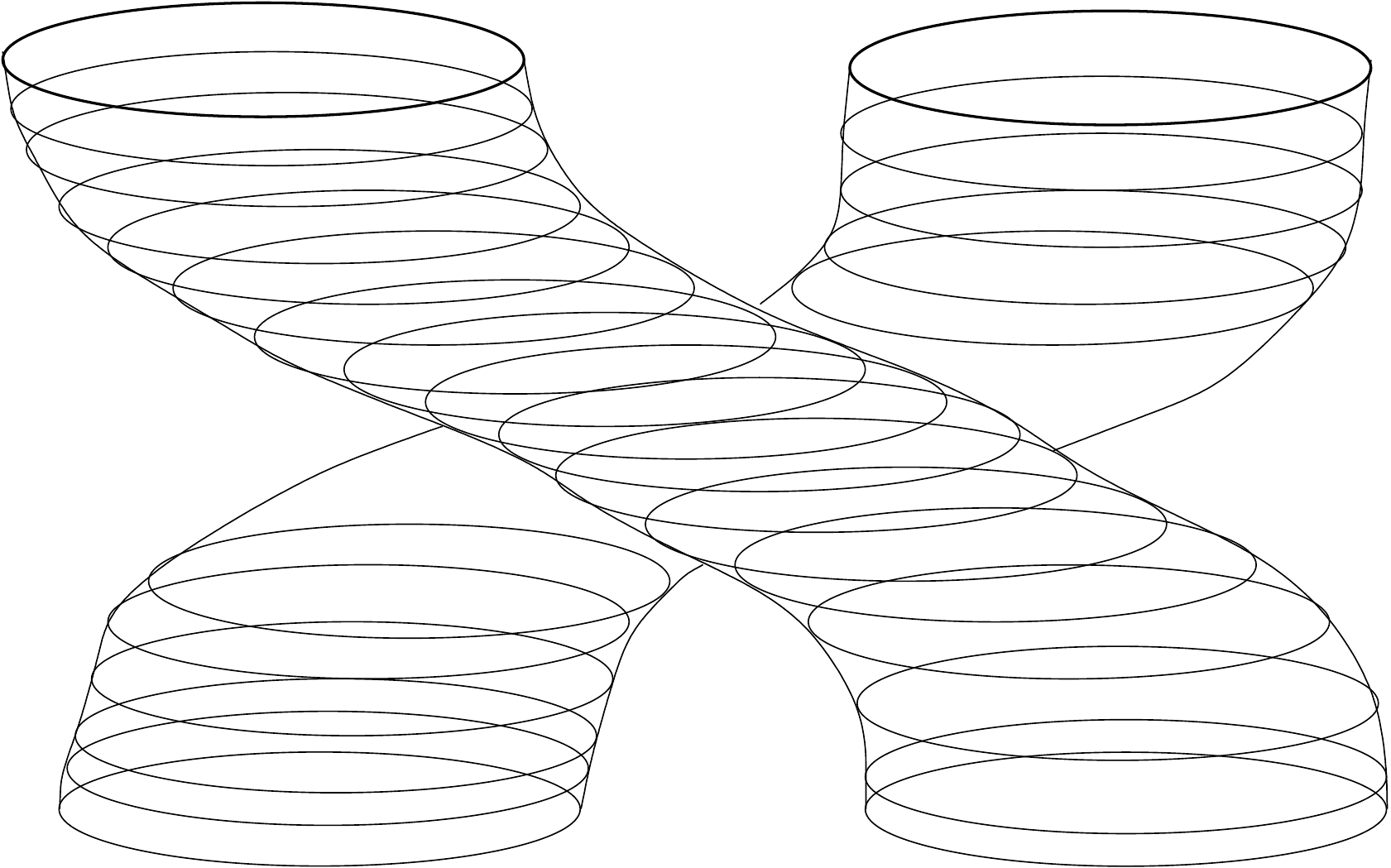}
\]
(NB. this is an overlaid `movie' view; each still is a picture of two loops in a 3d space; time goes vertically).
$\;$ 
Denote by $g_i$ the ``leapfrog" motion of passing the $i$th loop under and through the $i+1$st loop followed by sliding the $i+1$st loop into the position previously occupied by the $i$th loop, thus:

\begin{center}
    \begin{tikzpicture}[scale=0.8,every node/.style={scale=0.8}]
      \begin{scope}[xshift = 3cm]
        \draw plot [smooth,tension={\tension}] coordinates{
        ({\hrad-\hrad},0)
        ({\hrad-0.8*\hrad},{\maxy/5})
        ({2*\hrad+\rsep-0.8*\hrad},{2*\maxy/5})
        ({2*\hrad+\rsep-0.7*\hrad},{3*\maxy/5})
        ({3*\hrad+2*\rsep-1.15*\hrad},{4*\maxy/5})
        ({3*\hrad+2*\rsep-\hrad},{5*\maxy/5})};
        \draw plot [smooth,tension={\tension}] coordinates{
        ({\hrad+\hrad},0)
        ({\hrad+1.15*\hrad},{\maxy/5})
        ({2*\hrad+\rsep+0.7*\hrad},{2*\maxy/5})
        ({2*\hrad+\rsep+0.8*\hrad},{3*\maxy/5})
        ({3*\hrad+2*\rsep+0.8*\hrad},{4*\maxy/5})
        ({3*\hrad+2*\rsep+\hrad},{5*\maxy/5})};
        \draw[line width={60*0.8*\hrad},color=white] plot [smooth,tension={\tension}] coordinates{
        ({3*\hrad+2*\rsep},0)
        ({3*\hrad+2*\rsep},{\maxy/5})
        ({2*\hrad+\rsep},{2*\maxy/5})
        ({2*\hrad+\rsep},{3*\maxy/5})
        ({\hrad},{4*\maxy/5})
        ({\hrad},{5*\maxy/5})};
        \draw plot [smooth,tension={\tension}] coordinates{
        ({3*\hrad+2*\rsep-\hrad},0)
        ({3*\hrad+2*\rsep-1.1*\hrad},{\maxy/5})
        ({2*\hrad+\rsep-\hrad},{2*\maxy/5})
        ({2*\hrad+\rsep-1.1*\hrad},{3*\maxy/5})
        ({\hrad-0.9*\hrad},{4*\maxy/5})
        ({\hrad-\hrad},{5*\maxy/5})};
        \draw plot [smooth,tension={\tension}] coordinates{
        ({3*\hrad+2*\rsep+\hrad},0)
        ({3*\hrad+2*\rsep+0.9*\hrad},{\maxy/5})
        ({2*\hrad+\rsep+1.1*\hrad},{2*\maxy/5})
        ({2*\hrad+\rsep+\hrad},{3*\maxy/5})
        ({\hrad+1.1*\hrad},{4*\maxy/5})
        ({\hrad+\hrad},{5*\maxy/5})};
        \draw[dashed,color=black!80!white] plot [smooth,tension={\tension}] coordinates{
        ({\hrad-\hrad},0)
        ({\hrad-0.8*\hrad},{\maxy/5})
        ({2*\hrad+\rsep-0.8*\hrad},{2*\maxy/5})
        ({2*\hrad+\rsep-0.7*\hrad},{3*\maxy/5})
        ({3*\hrad+2*\rsep-1.15*\hrad},{4*\maxy/5})
        ({3*\hrad+2*\rsep-\hrad},{5*\maxy/5})};
        \draw[dashed, color=black!80!white] plot [smooth,tension={\tension}] coordinates{
        ({\hrad+\hrad},0)
        ({\hrad+1.15*\hrad},{\maxy/5})
        ({2*\hrad+\rsep+0.7*\hrad},{2*\maxy/5})
        ({2*\hrad+\rsep+0.8*\hrad},{3*\maxy/5})
        ({3*\hrad+2*\rsep+0.8*\hrad},{4*\maxy/5})
        ({3*\hrad+2*\rsep+\hrad},{5*\maxy/5})};
        \draw ({0.95*\hrad+\rsep},{2.5*\maxy/5}) arc (180:360:{1.05*\hrad} and {1.1*\vrad});
        \draw[dashed,color=black!20!white] ({0.95*\hrad+\rsep},{2.5*\maxy/5}) arc (180:0:{1.05*\hrad} and {1.1*\vrad});
        \draw[color=white] ({0.95*\hrad+\rsep},{2.5*\maxy/5}) arc (180:130:{1.05*\hrad} and {1.1*\vrad});
        \draw[dashed,color=black!80!white] ({0.95*\hrad+\rsep},{2.5*\maxy/5}) arc (180:130:{1.05*\hrad} and {1.1*\vrad});
        \draw[color=white] ({0.95*\hrad+\rsep+2.1*\hrad},{2.5*\maxy/5}) arc (0:50:{1.05*\hrad} and {1.1*\vrad});
        \draw[dashed,color=black!80!white] ({0.95*\hrad+\rsep+2.1*\hrad},{2.5*\maxy/5}) arc (0:50:{1.05*\hrad} and {1.1*\vrad});
        \draw[dashed,color=black!80!white] ({2*\hrad+\rsep-0.7*\hrad},{2.5*\maxy/5}) arc (180:360:{0.7*\hrad} and {0.5*\vrad});
        \draw[dashed,color=black!50!white] ({2*\hrad+\rsep-0.7*\hrad},{2.5*\maxy/5}) arc (180:0:{0.7*\hrad} and {0.5*\vrad});
        \draw (0,0) arc (180:360:{\hrad} and {\vrad});
        \draw[dashed,color=black!80!white] (0,0) arc (180:0:{\hrad} and {\vrad});
        \draw (0,{\maxy}) arc(180:360:{\hrad} and {\vrad});
        \draw (0,{\maxy}) arc(180:0:{\hrad} and {\vrad});
        \draw ({2*\hrad+2*\rsep},0) arc (180:360:{\hrad} and {\vrad});
        \draw[dashed,color=black!80!white] ({2*\hrad+2*\rsep},0) arc (180:0:{\hrad} and {\vrad});
        \draw ({2*\hrad+2*\rsep},{\maxy}) arc(180:360:{\hrad} and {\vrad});
        \draw ({2*\hrad+2*\rsep},{\maxy}) arc(180:0:{\hrad} and {\vrad});
        \node[] at ({\hrad},{\miny-2*\rsep})  {};
        \node[] at ({2*\hrad+2*\rsep+\hrad},{\miny-2*\rsep}) {} ;
      \end{scope}
    \end{tikzpicture}\end{center}
\medskip

A presentation of $\LB_n$ by abstract generators $g_i, s_i$
(using the same symbols, to indicate how the homomorphism works) is
(see e.g. \cite{BaezCransWise}):
\begin{equation} \label{eq:sss1}
g_i g_{i+1} g_i =  g_{i+1} g_i g_{i+1} ,
\qquad
g_i g_j = g_j g_i \quad |i-j|\neq 1
\end{equation}
\begin{equation}\label{eq:symmetric}
s_i^2 = 1 , 
\qquad
s_i s_{i+1} s_i = s_{i+1} s_i s_{i+1} , 
\qquad
s_i s_j = s_j s_i \qquad |i-j|\neq 1
\end{equation}
\begin{equation}\label{mixed}
s_i s_{i+1} g_i  =  g_{i+1} s_i s_{i+1}  , 
\qquad 
g_i g_{i+1} s_i  =  s_{i+1} g_i g_{i+1}  , 
\qquad 
g_i s_j  = s_j g_i \mbox{ for $ |i-j|>1$} . 
\end{equation}

\subsection{A group homomorphism}

Note that Lemma~\ref{lem:isofd} does not give an isomorphism between $\LB_n$ and $\NB_n$, and indeed they are not  isomorphic.  One way to see this is by taking their quotients by their commutator subgroups, i.e their abelianizations:  $\NB_n^{\textrm{ab}}\cong \Z\times\Z_{2n}$ whereas $\LB_n^{\textrm{ab}}\cong \Z\times \Z_2$ for $n>1$.  Note that in the degenerate case $n=1$ the group $\NB_1\cong \Z_2$ and $\LB_1=1$.
However our remark on the relationship between these constructions does lead to a beautiful homomorphism.

The precise relationship is the following:

\begin{lemma} \label{lem:N2L}
\ There is a group homomorphism 
$\zeta : \NB_n \rightarrow \LB_n$ 
given by 
$$
\tau \mapsto p := s_1 s_2\cdots s_{n-1} ;
$$
$\sigma_i \mapsto g_i$ for $i<n$; and
$\sigma_n \mapsto p g_{n-1} p^{-1}$.

\end{lemma}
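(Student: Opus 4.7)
The plan is to verify that the defining relations of $\NB_n$ from Theorem~\ref{thm:defining relations} hold in $\LB_n$ under the assignment $\zeta$. By Lemma~\ref{relationslemma}, it suffices to check (B1) for $i=1$, (B2) for $i=1$ and $3\le j\le n-1$, (N1) for all $i$, and (N2). The two braid-type cases are immediate, as they translate to $g_1g_2g_1=g_2g_1g_2$ and $g_1g_j=g_jg_1$ for $3\le j\le n-1$, both among the relations \eqref{eq:sss1}.

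The heart of the proof is (N1), i.e.\ $p\,\zeta(\sigma_i)\,p^{-1}=\zeta(\sigma_{i+1})$ for each $i$. For $1\le i\le n-2$ I would show $pg_ip^{-1}=g_{i+1}$ by writing $p=s_1s_2\cdots s_{n-1}$ and conjugating from the inside out: the innermost factors $s_j$ with $j\ge i+2$ commute past $g_i$ via $g_is_j=s_jg_i$ (third mixed relation in \eqref{mixed}); the central pair $s_is_{i+1}$ converts $g_i$ to $g_{i+1}$ via the first mixed relation $s_is_{i+1}g_i=g_{i+1}s_is_{i+1}$; and the outer factors $s_j$ with $j\le i-1$ commute with $g_{i+1}$ for the same reason as the first step. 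The case $i=n-1$ holds by the definition $\zeta(\sigma_n):=pg_{n-1}p^{-1}$. For $i=n$, the required equality $p^2g_{n-1}p^{-2}=g_1$ follows, once $p^n=1$ is known, by iterating the previous case to get $p^{n-2}g_1p^{-(n-2)}=g_{n-1}$. The remaining relation (N2), i.e.\ $p^{2n}=1$, likewise reduces to $p^n=1$.

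It remains to establish $p^n=1$ in $\LB_n$. The argument I have in mind is that the subgroup $\langle s_1,\ldots,s_{n-1}\rangle\le\LB_n$ is isomorphic to $\Sn_n$. Indeed, relations \eqref{eq:symmetric} form exactly the Coxeter presentation of $\Sn_n$, so this subgroup is a quotient of $\Sn_n$; on the other hand, the canonical surjection $\LB_n\twoheadrightarrow \Sn_n$ sending $g_i,s_i\mapsto(i\;i+1)$ (which respects every relation of $\LB_n$) restricts to a surjection of this subgroup onto $\Sn_n$ whose composition with the preceding quotient is the identity, forcing both maps to be isomorphisms. Since $p$ corresponds to the $n$-cycle $(1\,2\,\cdots\,n)\in\Sn_n$, we have $p^n=1$.

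The only mildly delicate step is the commutation juggling in computing $pg_ip^{-1}$; everything else is a direct consequence of the presentations of $\NB_n$ and $\LB_n$.
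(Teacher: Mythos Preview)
Your proof is correct and follows essentially the same route as the paper's: reduce via Lemma~\ref{relationslemma}, verify (B1), (B2) from \eqref{eq:sss1}, establish (N1) for $1\le i\le n-2$ using the mixed relations \eqref{mixed}, take $i=n-1$ by definition, and close the cycle at $i=n$ using $p^n=1$. The paper phrases the key computation as $pg_i=g_{i+1}p$ (pushing $g_i$ leftward through $p$) rather than as an inside-out conjugation, and simply asserts that $p^{2n}=1$ ``follows from the symmetric group relations \eqref{eq:symmetric}'' without your more careful argument that $\langle s_1,\ldots,s_{n-1}\rangle\cong\Sn_n$; but these are cosmetic differences, not substantive ones.
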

\begin{proof}

It is enough to check that the relations given in Lemma \ref{relationslemma} are satisfied. Notice (B1) (for $\zeta(\sigma_1)$ and $\zeta(\sigma_2)$) as well as (B2) (for $\zeta(\sigma_1)$ and $\zeta(\sigma_j)$ with $2<j<n$) are verified by (\ref{eq:sss1}) and $\zeta(\tau)^{2n}=1$ follows from the symmetric group relations (\ref{eq:symmetric}).  We  use  (\ref{mixed}) to see that 
$$
pg_i=s_1\cdots (s_is_{i+1}g_i) s_{i+2}\cdots s_{n-1}=s_1\cdots s_{i-1}(g_{i+1}s_is_{i+1})\cdots s_{n-1}=g_{i+1}p
$$ 
proving (N1) for $i\leq n-2$.  For $i=n-1$ (N1) is true by definition: $\zeta(\sigma_n)=pg_{n-1}p^{-1}=\zeta(\tau\sigma_{n-1}\tau^{-1})$.  Moreover, $g_1=p^{-n+1}g_np^{n-1}=pg_np^{-1}$ (since $p^n=1$) which proves (N1) for $i=n$, completing the verification.

\end{proof}

\begin{theorem}
The kernel of $\zeta$ is the order $2$ central subgroup $\langle\tau^n\rangle$.
\end{theorem}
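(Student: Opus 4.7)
The proof divides into the two containments. For $\langle\tau^n\rangle\subseteq\ker\zeta$ with $\tau^n$ central of order~$2$: centrality is immediate from iterating (N1), $\tau^n\sigma_i\tau^{-n}=\sigma_{i+n}=\sigma_i$; the order divides $2$ by (N2), and to rule out $\tau^n=1$ I would construct the homomorphism $\psi:\NB_n\to\Z_{2n}$ defined by $\sigma_i\mapsto 0$, $\tau\mapsto 1$ (every defining relation is visibly preserved) and note $\psi(\tau^n)=n\neq 0$; finally $\zeta(\tau^n)=(s_1s_2\cdots s_{n-1})^n=1$ because by~\eqref{eq:symmetric} the subgroup of $\LB_n$ generated by the $s_i$ is a quotient of $S_n$, in which $s_1s_2\cdots s_{n-1}$ is an $n$-cycle.

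For the reverse containment, the plan is to show that the induced map $\bar\zeta:\NB_n/\langle\tau^n\rangle\to\LB_n$ is injective. Using the semidirect decomposition $\NB_n=\BA_n\rtimes\langle\tau\rangle$, every element factors uniquely as $w=\beta\tau^k$ with $\beta\in\BA_n$ and $0\le k<2n$, and $\zeta(w)=1$ becomes $\zeta(\beta)=p^{-k}$. Since $|p|=n$ in $\LB_n$ (by the preceding paragraph), $p^{-k}$ runs over $\{1,p,\ldots,p^{n-1}\}$. Hence $w\in\langle\tau^n\rangle$ reduces to the two statements
\begin{enumerate}
\item[(a)] $\zeta|_{\BA_n}:\BA_n\to\LB_n$ is injective, and
\item[(b)] $\zeta(\BA_n)\cap\langle p\rangle=\{1\}$,
\end{enumerate}
which together force $\zeta(\beta)=1$, hence $\beta=1$, and $n\mid k$, so $w\in\{1,\tau^n\}$.

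The substantive obstacle is (a). My preferred approach is topological: $\BA_n$ is the fundamental group of the configuration space of $n$ points on an annulus, and there is a natural map of this space into the configuration space of $n$ unlinked oriented circles in $\R^3$ arranged cyclically (the $b$-arrangement of~\eqref{eq:b mode}), obtained by thickening each point to a small circle in a plane perpendicular to the annulus. Checking on the generating loops $\sigma_i$ shows that the induced map on $\pi_1$ is $\zeta|_{\BA_n}$, and a continuous retraction in the opposite direction (collapsing each circle back to its centre) yields the required injectivity on $\pi_1$. A purely algebraic alternative would be to exhibit a faithful linear representation of $\BA_n$ that factors through $\LB_n$. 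For (b), the abelianisation $\LB_n^{\mathrm{ab}}=\Z\times\Z_2$ almost separates the two subgroups ($\zeta(\BA_n)^{\mathrm{ab}}\subseteq\Z\times\{0\}$ versus $\langle p\rangle^{\mathrm{ab}}\subseteq\{0\}\times\Z_2$); combining this with the writhe homomorphism $\LB_n\to\Z$, $g_i\mapsto 1$, $s_i\mapsto 0$ (which annihilates $\langle p\rangle$ and records signed $\sigma$-length on $\zeta(\BA_n)$), together with the natural surjection $\LB_n\twoheadrightarrow S_n$, should force $p^{-k}=1$ and thus complete the proof.
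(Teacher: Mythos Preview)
Your easy containment is correct, and the reduction of the hard direction to (a) injectivity of $\zeta|_{\BA_n}$ and (b) $\zeta(\BA_n)\cap\langle p\rangle=\{1\}$ is a clean and valid manoeuvre. But both substeps, as you sketch them, have real gaps.

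For (a), the retraction you describe does not exist. Collapsing each unlinked circle in $\R^3$ to its centre lands in the unordered configuration space of $n$ points in $\R^3$, whose fundamental group is $S_n$, not in the configuration space of $n$ points on an annulus; there is no continuous way to force the centres back onto a fixed annulus, so you do not obtain a left inverse on $\pi_1$ to the thickening map. Injectivity of $\BA_n\hookrightarrow\LB_n$ is a genuine theorem: one route is through the faithful action~\eqref{eq:free1} of $\LB_n$ on $F_n$ combined with a faithfulness statement for the induced Artin-type action of $\BA_n$, but that is exactly the content of the Bellingeri--Bodin result being cited. Your proposed ``algebraic alternative'' (a faithful linear representation of $\BA_n$ factoring through $\LB_n$) would also work in principle, but producing one is no easier than the theorem itself.

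For (b), the three invariants you list do not combine to force $p^{-k}=1$. The writhe map is simply the $\Z$-factor of the abelianisation, so adds nothing new; the $\Z_2$-factor sends $p$ to $n{-}1\bmod 2$, hence is blind when $n$ is odd; and under the surjection $\LB_n\twoheadrightarrow S_n$ sending both $g_i$ and $s_i$ to $(i\;i{+}1)$, the image of $\zeta(\BA_n)$ is all of $S_n$, so certainly meets $\langle p\rangle$ nontrivially. No contradiction emerges.

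The paper takes a completely different route: it does not argue directly at all, but observes that $\zeta$ is precisely the map on motion groups induced by deleting the auxiliary linking circle from $\LL_n$, and then invokes \cite[Lemma~3.1(2)]{BB16}, where the kernel of that forgetting map is computed.
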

\begin{proof} This is a direct consequence of \cite[Lemma 3.1(2)]{BB16} after observing that $\zeta$ is induced by the map from the necklace $\LL_n$ to $n$ free loops that forgets the auxiliary linking circle.
\end{proof}

Note that for any representation 
 $\rho :\LB_n \rightarrow GL(V)$ 
 then  $\rho \circ 
 \zeta  
: \NB_n \rightarrow GL(V)$
is a representation of $\NB_n$.

\medskip 

Let us briefly review $\LB_n$
and see what this gets us in terms of representations. 
Present knowledge of $\LB_n$ representation theory 
is limited, but not zero (see  e.g. \cite{kmrw} for a review). 
(NB 
the motion $s_i$ is not possible in $\NB_n$).

Firstly there is a realisation 
of $\LB_n$ as the group of conjugating automorphisms of the 
the free group 
$F_n = \langle x_1, x_2,...,x_n \rangle$  generated by braid and permutation automorphisms
\cite{frr}:

\begin{equation} \label{eq:free1}
g_i : \begin{cases} 
x_i    & \mapsto x_{i+1} \\
x_{i+1} & \mapsto x_{i+1}^{-1} x_i x_{i+1} \\
x_j    & \mapsto x_j \;\;\; \mbox{otherwise} 
\end{cases}
\qquad\qquad
s_i : \begin{cases} 
x_i   & \mapsto x_{i+1} \\
x_{i+1} & \mapsto  x_i  \\
x_j    & \mapsto x_j \;\;\;\; \mbox{otherwise} 
\end{cases}
\end{equation}
While this is a faithful action for $\LB_n$, the induced action of $\NB_n$ is of course not faithful.  

Local representations of $\LB_n$ are constructed in \cite{kmrw} which give rise to $\NB_n$ representations.  However, there are many local representations of $\NB_n$ that do not extend to $\LB_n$ (cf. \S\ref{ss:LR}).

\section{Extensions of Familiar $\B_n$ Representations}\label{s:extensions}
Having laid out the general theory in \S\ref{ss:B2N}, 
in this section we provide concrete examples of $\NB_n$ representations obtained by extending some well-known representations of $\B_n$.
\subsection{Extensions of Standard Representations}
Recall 
(see e.g. \cite{sysoeva})
that a \emph{standard representations} of $\B_n$ is $(\beta,V)$ such that $$\beta(\sigma_i)=I_{i-1}\oplus\begin{pmatrix}0&z\\1&0\end{pmatrix}\oplus I_{n-i-1}.$$ Where $I_k$ is the $k$ dimensional identity, and $z\in\C\backslash\{0,1\}$.  Notice that the image of a standard representation is a group of monomial matrices, and is therefore virtually finite.   First we will state a theorem that deals with $n\geq3$, and then we will state results for $n=2$.  We caution the reader that the established nomenclature is unfortunate as we must now discuss standard extensions of standard representations of $\B_n$.

\begin{prop}\label{std ext std rep}
For $n\geq3$, any standard extension of the standard representation from $\B_n$ to $\NB_n$ is of the form $\rho(\tau)=\lambda\beta(\gamma)$, where $\lambda\in\C$ such that $\lambda^{2n}=z^{-2(n-1)}$.
\end{prop}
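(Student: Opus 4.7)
The plan is to combine irreducibility of the standard representation with Schur's lemma to force $\rho(\tau)$ to be a scalar multiple of $\beta(\gamma)$, then compute $\beta(\gamma)^{2n}$ explicitly and extract the constraint on the scalar from relation (N2).

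First, I would invoke the classical fact that for $n\geq 3$ and $z\neq 0,1$ the standard representation $\beta$ of $\B_n$ is irreducible (cf.\ \cite{sysoeva}). By the definition in \S\ref{ss:B2N}, a standard extension satisfies $\rho(\tau)=A\beta(\gamma)$ with $A\in \End_{\B_n}(V)$. Schur's lemma then forces $A=\lambda I$ for some $\lambda\in\C^*$, and hence $\rho(\tau)=\lambda\beta(\gamma)$.

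Next, I would compute $\beta(\gamma)^n$ directly in the standard basis $\{e_1,\ldots,e_n\}$. Since $\beta(\sigma_i)$ acts by $e_i\mapsto e_{i+1}$, $e_{i+1}\mapsto ze_i$ and fixes every other basis vector, applying $\gamma=\sigma_1\cdots\sigma_{n-1}$ to each $e_j$ in turn gives
\[
\beta(\gamma)e_i=e_{i+1}\quad(i<n),\qquad \beta(\gamma)e_n=z^{n-1}e_1,
\]
the power $z^{n-1}$ accumulating because each of $\sigma_{n-1},\sigma_{n-2},\ldots,\sigma_1$ in turn contributes a factor of $z$ while ``sliding'' $e_n$ back down to $e_1$. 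Iterating yields $\beta(\gamma)^n=z^{n-1}I$, hence $\beta(\gamma)^{2n}=z^{2(n-1)}I$.

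Finally, relation (N2) demands $\rho(\tau)^{2n}=I$, i.e.\ $\lambda^{2n}z^{2(n-1)}=1$, which rearranges to $\lambda^{2n}=z^{-2(n-1)}$. Conversely, any such $\lambda$ actually does produce a valid extension: this is immediate from Lemma~\ref{lem:1} applied to the irreducible $\beta$, since $c_\beta=z^{2(n-1)}$ by the computation above and the remaining relations (B1), (B2), (N1) are automatic for any choice of $\lambda$. The main obstacle here is rather mild: irreducibility of the standard representation is a well-known result that does require the hypothesis $z\neq 1$ (otherwise the all-ones vector spans an invariant subspace), and the only genuine computation is the $z$-bookkeeping in the evaluation of $\beta(\gamma)$ on $e_n$.
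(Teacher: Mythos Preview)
Your argument is correct and follows the same route as the paper: invoke irreducibility of the standard representation (for $n\geq 3$, $z\neq 0,1$) to reduce $A$ to a scalar via Schur's lemma, then impose (N2) using $\beta(\gamma)^{2n}=z^{2(n-1)}I_n$. The paper simply asserts the value of $\beta(\gamma)^{2n}$ without justification, whereas you supply the explicit computation of $\beta(\gamma)$ on the basis vectors (which is the correct one, including the $z^{n-1}$ factor on $e_n$); you also add the converse direction via Lemma~\ref{lem:1}, which the paper leaves implicit.
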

\begin{proof}
From the fact that the standard representation 
is irreducible for $n\geq 3$ \cite[Lemmas 5.3 and 5.4]{sysoeva}, we have that $\rho(\tau)=\lambda\beta(\gamma)$ for $\lambda\in\C\backslash\{0\}$. The fact that $\rho(\tau)^{2n}=\lambda^{2n}\beta(\gamma)^{2n}=I_{n}$, and $\beta(\gamma)^{2n}=z^{2(n-1)}I_n$ gives us that $\lambda^{2n}=\displaystyle z^{-2(n-1)}$.
\end{proof}

Now considering $n=2$, we want $A=\begin{pmatrix}a&b\\c&d\end{pmatrix}$ such that $AZ=ZA$ (where $\beta(\sigma_1)=Z=\begin{pmatrix}0&z\\1&0\end{pmatrix}$), $(AZ)^{4}=I_2$, and $(AZ)^2Z(AZ)^{-2}=Z$. The last equation comes from the fact that we want $\tau\sigma_2\tau^{-1}=\sigma_1$, and we are defining the image of $\sigma_2$ to be the image of $\tau\sigma_1\tau^{-1}$. From $AZ=ZA$ we get that $a=d$ and $b=zc$. Using this along with the other two equations, we get the following possibilities for $\rho(\tau)=AZ$:
$$\left\{\pm I_2,\pm\begin{pmatrix}1&0\\0&{i}(z)^{-1}\end{pmatrix},\xi_4\begin{pmatrix}0&\sqrt{z}\\(\sqrt z)^{-1}&0\end{pmatrix},\pm\frac{1}{2}\begin{pmatrix}1\pm i&(1\mp i)\sqrt z\\ (1\mp i)\sqrt z\iv&1\pm i\end{pmatrix}\right\},$$ where $\xi_4$ is a choice of 4th root of unity.

Next let us consider non-standard extensions of the standard representation. This means we want a representation $(\phi,V)$, and $T\in \End(V)$ such that $\phi(\tau)=T$ and $T\neq \lambda\beta(\gamma)$ (where $\lambda$ is as defined in proposition \ref{std ext std rep}). We know that we need $T^{2n}=I_V$, $T\beta(\sigma_i)=\beta(\sigma_{i+1})T$ for all $i=1,\dots,n-2$, and that $T^2\beta(\sigma_{n-1})=\beta(\sigma_1)T^2$. Let $T=(t_{i,j})_{i,j=1}^n$. The later two relations give us that $t_{i,j}=0$ if $j\not\equiv i-1\mod n$ and $t_{i,i-1}=t_{2,1}$ for all $i=2,\dots,n-1$. Hence $T$ has the following block form: $\begin{pmatrix}0& a\\ tI_{n-1} & 0\end{pmatrix}.$ Then $T^{2n}=I_V$ gives us that $a=t^{-(n-1)}$, and therefore $T=\begin{pmatrix}0&t^{-n+1}\\tI_{n-1}&0\end{pmatrix}$. If $t^{2n}=z^{-2(n-1)}$, we would have a standard extension. Hence, $t^{2n}\neq z^{-2(n-1)}$ would give us a non-standard extension. This gives us the following:

\begin{theorem}
For $n\geq3$, a representation, $\phi$, of $\NB_n$ is an extension of the standard representation, $\beta$, of $\B_n$ if $\phi(\sigma_i)=\beta(\sigma_i)$ for $i=1,\dots,n-1$, $\phi(\tau)=\begin{pmatrix}0&t^{-n+1}\\tI_{n-1}&0\end{pmatrix}$ (for $t\neq0$), and $\phi(\sigma_n)=\phi(\tau\sigma_{n-1}\tau\iv)$.\\ It should be noted that if $t^{2n}\neq z^{-2(n-1)}$, then the representation $\phi$ is not a standard extension of $\beta$, i.e.\ the image of $\tau$ is not a rescaling of that of the single twist $\gamma$.
\end{theorem}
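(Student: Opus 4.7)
The plan is to verify that the assignments $\phi(\sigma_i) = \beta(\sigma_i)$ for $i \le n-1$, $\phi(\tau)$ as displayed, and $\phi(\sigma_n) = \phi(\tau)\beta(\sigma_{n-1})\phi(\tau)^{-1}$ extend $\beta$ to a genuine representation of $\NB_n$. The converse direction --- that every such extension must have $\phi(\tau)$ of this shape --- is essentially the content of the derivation in the paragraph preceding the statement, and I would simply reference that.

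By Lemma \ref{relationslemma}, checking that $\phi$ is a representation reduces to verifying (N2), (N1), (B1) for $i=1$, and (B2) for $i=1$ with $3 \le j \le n-1$. The last two involve only $\beta(\sigma_1),\ldots,\beta(\sigma_{n-1})$ and hold automatically since $\beta$ is a $\B_n$ representation. For (N2), writing $T = \phi(\tau)$, I would note that $Te_k = te_{k+1}$ for $1 \le k \le n-1$ and $Te_n = t^{-(n-1)}e_1$; tracing each basis vector through $n$ iterations gives $T^n e_k = t^{n-k}\cdot t^{-(n-1)}\cdot t^{k-1}\, e_k = e_k$, so $T^n = I_n$ and in particular $T^{2n} = I_n$.

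For (N1), the case $i = n-1$ holds by the very definition of $\phi(\sigma_n)$. The cases $1 \le i \le n-2$ reduce to the matrix identity $T\beta(\sigma_i) = \beta(\sigma_{i+1})T$, which is a short entry-wise check against the explicit block shape of $\beta(\sigma_i)$: both sides act on the standard basis as sparse weighted cyclic operators that are readily seen to coincide. The remaining case $i = n$ closes up by iteration: successive conjugations by $T$ send $\beta(\sigma_1) \to \beta(\sigma_2) \to \cdots \to \beta(\sigma_{n-1}) \to \phi(\sigma_n)$, where the last step is the definition, and since $T^n = I_n$ one further conjugation must return to $\beta(\sigma_1)$, giving $T\phi(\sigma_n)T^{-1} = \beta(\sigma_1)$, which is precisely (N1) at $i = n$.

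Finally, the concluding remark about standard versus non-standard extensions follows by comparison with Proposition \ref{std ext std rep}. Since $\beta(\gamma)$ itself has exactly the displayed block shape but with lower-diagonal entries equal to $1$ and top-right entry equal to $z^{n-1}$, the equality $T = \lambda\beta(\gamma)$ forces $\lambda = t$ and $\lambda z^{n-1} = t^{-(n-1)}$, i.e., $t^n = z^{-(n-1)}$, and in particular $t^{2n} = z^{-2(n-1)}$. Hence the hypothesis $t^{2n} \neq z^{-2(n-1)}$ precludes $T$ from being a scalar multiple of $\beta(\gamma)$, so $\phi$ cannot be a standard extension. The only obstacle worth flagging is the entry-wise verification of $T\beta(\sigma_i) = \beta(\sigma_{i+1})T$ in (N1), but since both sides are sparse weighted permutations this is routine bookkeeping rather than anything substantive.
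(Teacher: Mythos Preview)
Your proposal is correct and proceeds along the same lines as the paper. The paper's argument is the paragraph immediately preceding the theorem: it derives the block form of $T$ by imposing the relations $T\beta(\sigma_i)=\beta(\sigma_{i+1})T$ for $1\le i\le n-2$, $T^2\beta(\sigma_{n-1})=\beta(\sigma_1)T^2$, and $T^{2n}=I$ on an arbitrary matrix $T=(t_{i,j})$, and then states the theorem without a separate proof environment, leaving the sufficiency verification implicit. You supply that verification explicitly via Lemma~\ref{relationslemma}, which is exactly the right tool and is more careful than the paper at this point; your treatment of (N1) at $i=n$ by iterating the conjugation chain and using $T^n=I_n$ is the clean way to close the cycle. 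Your comparison with $\beta(\gamma)=\begin{pmatrix}0&z^{n-1}\\ I_{n-1}&0\end{pmatrix}$ to handle the standard/non-standard dichotomy also matches the paper's reasoning and Proposition~\ref{std ext std rep}.
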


\subsection{Extensions of the Reduced Burau representation}
As a reminder, the reduced Burau representation $\rho$ of $\B_n$ is a $n-1$ dimensional representation defined as follows:
$$\rho(\sigma_1)=\begin{pmatrix}-t&0& \\ -1&1& \\ & & I_{n-3}\end{pmatrix},\rho(\sigma_i)=\begin{pmatrix}I_{i-2} & & & \\ & 1 & -t & 0 & \\ & 0 & -t & 0 & \\ & 0 & -1 & 1 & \\ & & & & I_{n-i-2} \end{pmatrix},\rho(\sigma_{n-1})=\begin{pmatrix}I_{n-3} & & \\ & 1 & -t \\ & 0 & -t\end{pmatrix}$$ where $t$ is a nonzero complex number. The reduced Burau representation is irreducible if $1+t+t^2+\cdots+t^{n-1}\neq0$. This means (as stated in remark 2.3)  that any standard extension has $\rho(\tau)=\lambda\rho(\gamma)$.

\begin{prop}
Any standard extension $\rho$ of $\NB_n$ of the reduced Burau representation (with $1+t+\cdots+t^{n-1}\neq0$) has the form $\rho(\tau)=\lambda\rho(\gamma)$ where $\lambda^{2n}=t^{-2n}$
\end{prop}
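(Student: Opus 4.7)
The plan is to mirror the argument used for Proposition~3.1. The statement really has two parts: first, that in an irreducible setting a standard extension must take the form $\rho(\tau)=\lambda\rho(\gamma)$ with $\lambda$ a scalar; and second, the numerical constraint $\lambda^{2n}=t^{-2n}$. The first part is an immediate application of Schur's lemma once irreducibility is invoked, so the real content of the proposition is computing the scalar by which $\rho(\gamma^{2n})$ acts.

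First I would record that under the hypothesis $1+t+\cdots+t^{n-1}\neq 0$ the reduced Burau representation $\rho$ is irreducible. By definition of a standard extension we have $\rho(\tau)=A\rho(\gamma)$ with $A\in\End_{\B_n}(V)$, and irreducibility plus Schur forces $A=\lambda\,\one_V$ for some $\lambda\in\C^\times$. This gives the claimed form $\rho(\tau)=\lambda\rho(\gamma)$, reducing the relation (N2) to $\lambda^{2n}\rho(\gamma^{2n})=\one_V$. Since $\gamma^n$ is central in $\B_n$, Schur also gives $\rho(\gamma^n)=c\,\one_V$ for some $c\in\C^\times$, and everything comes down to identifying $c=t^n$ (so that $\rho(\gamma^{2n})=t^{2n}\one_V$ and hence $\lambda^{2n}=t^{-2n}$).

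To compute $c$, I would first take determinants: reading the explicit matrix for $\rho(\sigma_i)$ given above, its eigenvalues are $-t$ (with multiplicity one) and $1$ (with multiplicity $n-2$), so $\det\rho(\sigma_i)=-t$, whence $\det\rho(\gamma)=(-t)^{n-1}$ and $c^{n-1}=\det\rho(\gamma^n)=(-t)^{n(n-1)}=t^{n(n-1)}$. This already pins $c$ down to an $(n{-}1)$st root of $t^{n(n-1)}$, i.e.\ $c=\zeta t^n$ with $\zeta^{n-1}=1$. The remaining ambiguity in $\zeta$ is the one genuine obstacle: the determinant is insensitive to it. I would remove it by computing the eigenvalues of $\rho(\gamma)$ explicitly—either by direct diagonalization of the matrix $\rho(\sigma_1)\cdots\rho(\sigma_{n-1})$ using the block structure in the displayed formulas for $\rho(\sigma_i)$, or by appealing to the classical fact (via the Hecke-algebra quotient and the unreduced Burau representation) that the eigenvalues of $\rho(\gamma)$ on reduced Burau are $\zeta_n^k t$ for $k=1,\dots,n-1$, where $\zeta_n=e^{2\pi i/n}$. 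Either route yields $\rho(\gamma^n)=t^n\one_V$, which combined with the previous step gives $\lambda^{2n}t^{2n}=1$ and completes the proof.
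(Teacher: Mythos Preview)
Your proposal is correct and follows the same skeleton as the paper's proof: invoke irreducibility (from $1+t+\cdots+t^{n-1}\neq 0$) and Schur's lemma to get $\rho(\tau)=\lambda\rho(\gamma)$, then impose (N2) to obtain $\lambda^{2n}\rho(\gamma^{2n})=I_V$, and finish by identifying the scalar $\rho(\gamma^{2n})=t^{2n}I_V$. The paper's proof simply \emph{asserts} this last identity without justification, whereas you supply an argument for it---first narrowing $c=\rho(\gamma^n)$ via determinants to $c=\zeta t^n$ with $\zeta^{n-1}=1$, then resolving the root-of-unity ambiguity by the eigenvalue description $\mathrm{Spec}\,\rho(\gamma)=\{\zeta_n^k t:1\le k\le n-1\}$. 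So your route is not different from the paper's, just more fully worked out at the one nontrivial point.
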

\begin{proof}
From the above, we have that $\rho(\tau)=\lambda\rho(\gamma)$ for some scalar $\lambda$. From the fact that $\rho(\tau)^{2n}=\lambda^{2n}\rho(\gamma)^{2n}=I_V$, and $\rho(\gamma)^{2n}=t^{2n}I_V$. We are given that $\lambda^{2n}=t^{-2n}$
\end{proof}


\subsection{Extensions of the Lawrence-Krammer-Bigelow Representation} Let $V$ be a $\binom{n}{2}$ dimensional vector space with basis $v_{i,j}$ ($1\leq i,j\leq n$). Assuming that the order of the indices do not matter, and that $t,q$ are nonzero complex numbers, the Lawrence-Krammer-Bigelow (LKB) representation is defined as:
\begin{align*}
\sigma_iv_{i,i+1} &= tq^2v_{i,i+1} & \\
\sigma_iv_{j,k} &= v_{j,k} & \text{for }\{i,i+1\}\cap\{j,k\}=\emptyset\\
\sigma_iv_{i+1,j}&=v_{i,j} & \text{ for } j\neq i,i+1\\
\sigma_iv_{i,j}&=tq(q-1)v_{i,i+1}+(1-q)v_{i,j}+qv_{i+1,j} & \text{ if }i+1<j\\
\sigma_iv_{j,i}&=(1-q)v_{j,i}+qv_{j,i+1}+q(q-1)v_{i,i+1} & \text{ if }j<i
\end{align*}
It can be computed that $\gamma v_{i,j}=\left\{\begin{array}{cc}
tq^2{v}_{i,i+1} & \text{if }j=n\\ q^2v_{i+1,j+1} & \text{if }j<n
\end{array}\right..$ Repeating this gives that $\gamma^nv_{i,j}=tq^{2n}v_{i,j}$. Therefore, a standard extension to $\NB_n$ given by $\tau\mapsto\kappa\gamma$ where $\kappa$ is a scalar, we find $\kappa=\omega_{2n}(t^{-1/n}q^{-2})$, with $\omega_{2n}$ a $2n$-th root of unity. For $n=3$ and $n=4$, these are the only standard extensions. Notice that if $\omega_{2n}$ is an $n$th root of unity, $\tau^n$ would be in the kernel. Thus the standard extension would not be faithful.
\subsubsection{Nonstandard Extensions}
For $n=2$, since any LKB representation is 1 dimensional, any extension is standard.\\ For $n=3$, with the additional assumption that $q\neq1$ and $\alpha$ a choice of cube root of $\pm t^{-1}$, the following give a nonstandard extension of the LKB representation: $$\tau\mapsto\alpha\begin{pmatrix}0&(q^2-q+1)q^{-2}&-(q-1)q^{-2}\\0&-(q-1)q^{-1} &  q^{-1}\\  tq^2&(q-1)(tq^2-q+1)q^{-1} & (q-1)q^{-1}\end{pmatrix}.$$ If we choose $\alpha$ to be the cube root of $t^{-1}$, then the extension is not faithful (as $\tau^3$ would be in the kernel).\\ For $n=4$, again, let $q\neq1$ and $\beta\in\{\pm\sqrt{\pm t},(-t^2)^{\frac{1}{4}}\}$. Then we obtain a nonstandard extension of the LKB representation by having the image of $\tau$ be
{\small$$\beta\begin{pmatrix}
0&0&(q^4t)^{-1}(q^3-q+1)&0&-(q^4t)^{-1}p & -(q^4t)^{-1}p\\
0&0&-(q^3t)^{-1}p & 0 & (q^3t)^{-1}(q^2-q+1) & -(q^3t)^{-1}p\\
0&0&-(q^2t)^{-1}p & 0 & -(q^2t)^{-1}p & (q^2t)^{-1}\\
q^2 & 0 & (q^3t)^{-1}p(q^3t-q+1) & 0 & (q^3t)^{-1}(q^3-2q^2+2q-1) & -(q^3t)^{-1}p^2\\
0 & q^2 & (q^2t)^{-1}p(q^3t-q+1) & 0 & -(q^2t)^{-1}p^2 & (q^2t)^{-1}p\\
0&0& -(qt)^{-1}p^2 & q^2 & (qt)^{-1}(q^3t-q^2(t+1)+2q-1) & (qt)^{-1}p
\end{pmatrix} ,$$} where $p=q-1$. Similar to the case with $n=3,$ if we choose $\beta=\pm\sqrt{\pm t}$, then $\tau^4$ is in the kernel of the extension. Hence it would not be faithful.

\section{Local Representations} \label{ss:LR}

One source of matrix representations of $\B_n$ is through braided vector spaces (BVS): these are pairs $(R,V)$ where $V$ is a vector spaces and $R\in\Aut(V^{\ot 2})$ satisfies the Yang-Baxter equation (on $V^{\ot 3}$)
$$(R\ot I_V)(I_V\ot R)(R\ot I_V)=(I_V\ot R)(R\ot I_V)(I_V\ot R).$$
The assignment $\rho^R(\sigma_i)=I_V^{\ot (i-1)}\ot R\ot I_V^{\ot (n-i-1)}$ then gives a representation of $\B_n$ on $V^{\ot n}$.  This is an example of a \emph{local} representation: each generator has non-trivial action only on two (adjacent) copies of $V$.  
\begin{remark}Note that in general $\rho^R$ may not lift to a representation of $\LB_n$: in \cite[Proposition 3.3]{kmrw} BVSs of \emph{group-type} are shown to lift to a loop braided vector space but the general case is open.
\end{remark}

By Theorem \ref{stdexn}, $\rho^R$ has a standard extension (as long as it is completely reducible).  As above, the image of the standard extension of $\rho^R$ to $\NB_n$ does not carry much more information than $\rho^R$ itself.  However, there is another extension of $\rho^R$, using the BVS obtained from the symmetric group.  Namely, define the flip operator $P(x\ot y)=y\ot x$ on $V\ot V$.  Then we have:

\begin{theorem}\label{localextension} Suppose $(R,V)$ is a BVS and $\rho^R$ the corresponding $\B_n$ representation.
 For $n\geq 3$, setting $\rho^R(\tau)=(P\ot I_V^{\ot n-2})\cdots(I_V^{\ot n-2}\ot P)$ defines an extension of $\rho^R$ to $\NB_n$. 
\end{theorem}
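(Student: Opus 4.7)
The plan is to recognise $T := \rho^R(\tau) = (P\otimes I_V^{\otimes n-2})\cdots(I_V^{\otimes n-2}\otimes P)$ as the operator implementing the cyclic shift of tensor factors in $V^{\otimes n}$. Each factor $P_{i,i+1} := I_V^{\otimes(i-1)}\otimes P\otimes I_V^{\otimes(n-i-1)}$ is the image of the transposition $(i,i+1)\in\Sn_n$ under the tautological action of $\Sn_n$ on $V^{\otimes n}$; these satisfy the symmetric group relations, and $T$ is the image of the single twist $s_1 s_2\cdots s_{n-1}\in \Sn_n$ (the $P$-analogue of $\gamma$). A direct computation then gives $T(v_1\otimes\cdots\otimes v_n) = v_n\otimes v_1\otimes\cdots\otimes v_{n-1}$, so $T^n = I_{V^{\otimes n}}$ and in particular $T^{2n} = I_{V^{\otimes n}}$, giving (N2).

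Next, since $T$ shifts tensor positions cyclically by one, conjugation by $T$ shifts the support of any local operator accordingly. As $\rho^R(\sigma_i) = I_V^{\otimes(i-1)}\otimes R\otimes I_V^{\otimes(n-i-1)}$ acts non-trivially only on positions $i$ and $i+1$, this yields $T\rho^R(\sigma_i)T^{-1} = \rho^R(\sigma_{i+1})$ for $1\leq i\leq n-2$. Relation (N1) then forces the definition $\rho^R(\sigma_n) := T\rho^R(\sigma_{n-1})T^{-1}$, which handles the case $i = n-1$ by construction. For $i = n$, iterating (N1) yields $\rho^R(\sigma_n) = T^{n-1}\rho^R(\sigma_1)T^{-(n-1)}$, whence $T\rho^R(\sigma_n)T^{-1} = T^n\rho^R(\sigma_1)T^{-n} = \rho^R(\sigma_1)$ by (N2). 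Thus (N1) holds for all $i$.

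To conclude, I invoke Lemma \ref{relationslemma}, which reduces all defining relations of $\NB_n$ to (N1), (N2), the case $i=1$ of (B1), and the cases $i=1$, $3\leq j\leq n-1$ of (B2). The first of these remaining relations is exactly the Yang-Baxter equation satisfied by $R$, and the second is immediate because $\rho^R(\sigma_1)$ and $\rho^R(\sigma_j)$ act on disjoint pairs of tensor factors whenever $j\geq 3$, hence commute. I do not anticipate any serious obstacle; the only mildly delicate point is identifying $T$ with the cyclic-shift operator and then tracking indices modulo $n$ when bootstrapping (N1) from $i\leq n-1$ to $i=n$, both of which are routine.
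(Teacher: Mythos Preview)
Your proposal is correct and follows essentially the same strategy as the paper: both invoke Lemma~\ref{relationslemma} to reduce to (N1), (N2), and the base cases of (B1), (B2), note that $\rho^R(\tau)$ has order $n$, and verify (N1) via the action of the flip operators. The only cosmetic difference is that the paper carries out the key conjugation identity by a direct computation on a pure tensor in $V^{\otimes 3}$, whereas you phrase it (equivalently, and arguably more cleanly) in terms of $T$ being the global cyclic-shift operator on $V^{\otimes n}$.
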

\begin{proof}

Again using Lemma \ref{relationslemma} it is enough to check (N1) and (N2): the fact that $(R,V)$ is a BVS gives (B1) for $i=1$ and $(B2)$ for $i=1$ and $3\leq j\leq n-1$ immediately.

 From our definition of $\rho^R(\tau)$, we have that $(N2)$ is satisfied (in fact, $\rho^R(\tau)$ has order $n$). 
 
 The key computation is to show that (N1) holds.  For this it is sufficient to show that 
 $$(P\ot I)(I\ot P)(R\ot I)(P\ot I)(I\ot P)=(I\ot R).$$
 Comparing the two operators on a pure tensor of basis elements $v_1\ot v_2\ot v_3$ we obtain $$(P\ot I)(I\ot P)[R(v_2\ot v_3)\ot v_1]\quad \text{and} \quad v_1\ot R(v_2\ot v_3)$$ for the left- and right-hand sides respectively, which are clearly equal.  This completes the proof.

\end{proof}
\begin{remarks}\begin{itemize}
    \item Obviously the operator $\rho^R(\tau)$ is not local in the strict sense: it acts non-trivially on all tensor factors.  However, its action does not mix vectors within the tensor factors, it only permutes them globally. 
    \item  We think the following is an interesting question: given $R$, how much bigger is the image $\rho^R(\NB_n)$ than $\rho^R(\B_n)$?  Note that while the subgroup generated by $\sigma_1,\ldots,\sigma_n$ has index $2n$ in $\NB_n$ \cite{BB16} $\B_n=\langle\sigma_1,\ldots,\sigma_{n-1}\rangle$ has infinite index.  If $|\rho^R(\B_n)|<\infty$ is  $|\rho^R(\NB_n)|<\infty$?
    \item In the degenerate $n=2$ case we have $\B_2\cong\Z$, so that \emph{any} $R \in\Aut(V^{\ot 2})$ gives a representation of $\B_2$.  
Setting $\rho^R(\tau)=P$ defines an extension to $\NB_2$ if $R$ is symmetric in the standard product basis of $V\ot V$: we have that $\rho^R(\sigma_1)=R$, $\rho^R(\tau)=P$, and $\rho^R(\sigma_2)=P R P$. The only relation that needs checking is $\rho^R(\sigma_1\sigma_2\sigma_1)=\rho^R(\sigma_2\sigma_1\sigma_2)$ i.e.
$RPRPR=PRPRPRP$ which is satisfied if $R$ is symmetric i.e.$PRP=R$. Note that $(B1)$ does not hold for every $R$ satisfying the Yang-Baxter equation, as the following example illustrates.
\end{itemize}
\end{remarks}
 \begin{example}\label{isingexample}
 Consider $\dim V=2$ and $R=\displaystyle\frac{1}{\sqrt2}\begin{pmatrix}1&0&0&1\\0&1&-1&0\\0&1&1&0\\-1&0&0&1\end{pmatrix}$. This gives us that $$\rho^R(\sigma_1\sigma_2\sigma_1)(e_2\ot e_1)\neq\rho^R(\sigma_2\sigma_1\sigma_2)(e_2\ot e_1).$$ Thus mapping $\rho^R(\tau)=P$ does not define an extension of $\rho^R$ from $\B_2$ to $\NB_2$.
\end{example}

For the matrix $R$ of this example we have verified the following conjecture for $n\leq 5$, showing that the image of $\NB_n$ can be significantly larger than that of $\B_n$.
\begin{conj}
Given the above $R$ and $n\geq3$, $|\rho^R(\NB_n)|=n|\rho^R(B\tilde{A}_n)|=n2^n|\rho^R(\B_n)|$.
\end{conj}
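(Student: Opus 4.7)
The plan is to recognise $\rho^R$ as a representation into the Clifford group $C_n$ on $n$ qubits (the Ising $R$-matrix is a Clifford gate) and to compute the three group orders via the short exact sequence
\[ 1 \longrightarrow P_n \longrightarrow C_n \longrightarrow \mathrm{Sp}_{2n}(\F_2) \longrightarrow 1,\]
where $P_n$ is the $n$-qubit Pauli group modulo phases. A direct matrix calculation gives $R^2 = X\otimes iY$, hence $\rho^R(\sigma_i)^2 = iX_iY_{i+1}$ for $1\le i<n$ and $\rho^R(\sigma_n)^2 = iX_nY_1$ (by conjugation with $\rho^R(\tau)$). Since $R$ normalises the two-qubit Pauli group and $\rho^R(\tau)$ merely permutes the tensor basis of $\Vn$, all three images $\rho^R(\B_n)\subseteq\rho^R(\BA_n)\subseteq\rho^R(\NB_n)$ lie in the finite group $C_n$.

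Setting $K_\bullet := \rho^R(\bullet)\cap P_n$ and $J_\bullet$ for the image in $\mathrm{Sp}_{2n}(\F_2)$, one has $|\rho^R(\bullet)| = |K_\bullet|\cdot|J_\bullet|$. Each $\rho^R(\sigma_i)$ projects onto the symplectic transvection $T_{v_i}$ with $v_i = e_{X_i}+e_{X_{i+1}}+e_{Z_{i+1}}$ (indices mod $n$), and $\rho^R(\tau)$ projects onto the qubit cyclic shift. A routine pairing check gives $\langle v_i,v_j\rangle = 1$ iff $i-j\equiv\pm 1\pmod n$, so the transvections satisfy the Coxeter relations of affine type $\widetilde A_{n-1}$. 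Consequently $J_{\B_n}\cong S_n$ (the faithful $A_{n-1}$-reflection representation), $J_{\BA_n}$ is the characteristic-two reduction of the affine Weyl group, namely $S_n\ltimes(\F_2)^{n-1}$ of order $n!\cdot 2^{n-1}$, and $J_{\NB_n}$ further adjoins the qubit cyclic shift, an element of order $n$ not in $J_{\BA_n}$ (the $T_{v_i}$ each fix the $X_j$-positions individually, whereas the qubit shift permutes them), giving $|J_{\NB_n}| = n\cdot |J_{\BA_n}|$.

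For the Pauli parts, I would show that $K_{\B_n}$ is the $\F_2$-span of the $J_{\B_n}$-orbit of $\{v_1,\ldots,v_{n-1}\}$, while $K_{\BA_n}$ is enlarged by the single additional element $\prod_{i=1}^n\rho^R(\sigma_i)^2$, which a short Pauli-algebra computation identifies (up to a $\pm i$ scalar) with the global $Z$-parity $Z_1Z_2\cdots Z_n$. This element lies in $K_{\BA_n}\setminus K_{\B_n}$ and is the unique new Pauli generator, giving $|K_{\BA_n}|/|K_{\B_n}| = 2$ and $K_{\NB_n}=K_{\BA_n}$ (since $\rho^R(\tau)$ only contributes to the symplectic image). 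Combining,
\[ [\rho^R(\BA_n):\rho^R(\B_n)] = 2\cdot 2^{n-1} = 2^n, \qquad [\rho^R(\NB_n):\rho^R(\BA_n)] = n,\]
which is exactly the conjecture.

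The main obstacle is the precise identification of $J_{\BA_n}$ and $J_{\NB_n}$ inside $\mathrm{Sp}_{2n}(\F_2)$. In characteristic two the usual reflection theory of affine Weyl groups can degenerate, so one must verify directly --- using the explicit root-system data carried by the $v_i$ --- that the image is exactly $S_n\ltimes(\F_2)^{n-1}$ and not a proper quotient, and similarly that no power $\rho^R(\tau)^k$ with $0<k<n$ hides inside $J_{\BA_n}$. The computer verification for $n\le 5$ pins these indices down and suggests a uniform proof via an explicit extension structure realising $\rho^R(\NB_n)$ as a central-product-like extension of $\rho^R(\BA_n)$ by the cyclic group $\langle\rho^R(\tau)\rangle$ of order $n$.
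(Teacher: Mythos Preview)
The paper does not contain a proof of this statement: it is explicitly labelled a \emph{conjecture}, with the surrounding text saying only that it has been verified by direct computation for $n\leq 5$. So there is no ``paper's proof'' to compare your attempt against; you are proposing a strategy for an open problem.

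Your Clifford--group framework is a natural and promising one. The computation $R^2=iX_iY_{i+1}$ is correct, so each $\rho^R(\sigma_i)$ does project to the symplectic transvection $T_{v_i}$ along $v_i=e_{X_i}+e_{X_{i+1}}+e_{Z_{i+1}}$, and your pairing calculation $\langle v_i,v_j\rangle=1\iff i-j\equiv\pm 1\pmod n$ is right. This is exactly the intersection pattern of the affine $\widetilde A_{n-1}$ diagram, so the outline $J_{\B_n}\cong S_n$, $J_{\BA_n}\cong S_n\ltimes(\F_2)^{n-1}$ is plausible. Two points deserve more care than you give them. First, the short exact sequence you quote is for the Clifford group \emph{modulo scalars}; since $-I=\rho^R(\sigma_i)^4$ lies in all three images, you must track the scalar subgroups in each of $\rho^R(\B_n)$, $\rho^R(\BA_n)$, $\rho^R(\NB_n)$ and confirm they coincide (otherwise the index count via $|K_\bullet|\cdot|J_\bullet|$ is off). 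Second, your argument that the qubit cyclic shift lies outside $J_{\BA_n}$ (``the $T_{v_i}$ each fix the $X_j$-positions individually'') is not correct as stated: $T_{v_i}$ sends $e_{X_{i+1}}\mapsto e_{X_{i+1}}+v_i$, which does disturb the $X$-block. A cleaner invariant is probably the action on the isotropic subspace $\mathrm{span}\{e_{X_1}+\cdots+e_{X_n}\}$ or a suitable quotient. You already flag the characteristic-two identification of $J_{\BA_n}$ as the main obstacle; I agree that this, together with the scalar bookkeeping, is where the real work lies.
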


\subsection{Gaussian Braided Vector Spaces}  From the above discussion, any unitary BVS provides a local representation of $\B_n$, which can be extended to a representation of $\NB_n$ in two ways: (1) the standard extension (Lemma \ref{lem:1}) and (2) the $n$-cycle extension of Theorem \ref{localextension}.  In this subsection we consider extensions of the Gaussian representations first studied by Jones \cite{jones89} and analyzed in \cite{Gaussian}.  As the matrix representations are somewhat unwieldy, we take a more algebraic approach.

As in \cite{Gaussian}, we define $ES(m,n-1)$ as the algebra generated by $u_1,\dots, u_{n-1}$ with the relations $u_i^m=1$, $u_{i}u_{i+1}=q^2u_{i+1}u_i$, and $u_iu_j=u_ju_i$ if $|i-j|>1$ where $q=\begin{cases} e^{2\pi i/m}, & \text{ if }m\text{ odd}\\ e^{\pi i/m}, & \text{ if }m\text{ even}\end{cases}.$ Setting $\varphi_n(\sigma_i)=\sum_{j=0}^{m-1}q^{j^2}u_i^j$ defines a homomorphism $\varphi_n:\B_n\to ES(m,n-1)$. To get a braided vector space from $ES(m,n-1)$ it is enough to find a vector space $V$ and $U\in \Aut(V^{\otimes2})$ such that the map $u_i\mapsto I_V^{\otimes i-1}\otimes U\otimes I_V^{\otimes n-i-1}$ defines a representation of $ES(m,n-1)$ on $V^{\otimes n}$. Let $V\cong\C^m,$ with standard basis $\{e_i|0\leq i\leq m-1\}$. Define $e_{i+m}=e_i$, and $U\in\End(V^{\otimes 2})$ by $U(e_i\otimes e_j)=q^{j-i}e_{i+1}\otimes e_{j+1}$. In \cite{Gaussian} it was shown that $u_i\mapsto U_i:=I^{\otimes i-1}\otimes U\otimes I^{\otimes n-i-1}$ gives a $*$-algebra homomorphism (where $u_i^*=u_i^{-1}$) from $ES(m,n-1)$ to $\End(V^{\otimes n})$. It was also shown that $R:=\frac{1}{\sqrt m}\sum_{j=0}^{m-1}q^{j^2}U^j$ is a unitary operator. Composing with $\varphi_n$ we obtain a unitary representation $\Phi_n:\B_n\to\Aut(V^{\otimes n})$, where $\Phi_n(\sigma_i)=\frac{1}{\sqrt m}\sum_{j=0}^{m-1}q^{j^2}U_i^j$.

To extend this idea to $\NB_n$ first we extend $ES(m,n-1)$ to another algebra, ${NES}(m,n)$. We define ${NES}(m,n)$ to be the algebra generated by $u_1,\dots,u_{n-1},t$ subject to the following relations:
\begin{enumerate}
\item $u_i^m=1=t^{n}$
\item $u_iu_{i+1}=q^2u_{i+1}u_i$ for all $1\leq i\leq n-2$
\item $u_iu_j=u_ju_i$ if $|i-j|\not= 1$,
\item $tu_it^{-1}=u_{i+1}$ for all $1\leq i\leq n-2$
\end{enumerate} where $q$ is either an $m$th or $2m$ root of unity as above. 

Notice that $NES(m,n)$ is \emph{nearly} a semidirect product of $ES(m,n-1)$ with $\Z_n$, except that $tu_{n-1}t^{-1}$ is not in $NES(m,n-1)$.  To make the connection to $\NB_n$ clearer, and to remedy this defect we introduce a useful auxiliary generator to obtain a presentation with more familiar modulo $n$ relations: 
\begin{lemma}\label{modnlemma}
If we define $u_n:=tu_{n-1}t^{-1}$ then $u_n$ satisfies (1) above, relations (2) and (4) above hold modulo $n$ and the condition in (3) can be replaced with $|i-j|\not\equiv 1\mod n$.
\end{lemma}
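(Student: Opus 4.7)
The plan is to verify each of the three claims by exploiting the simple observation that, once we set $u_n := tu_{n-1}t^{-1}$, conjugation by $t$ cyclically permutes $u_1,\ldots,u_n$, and $t^n=1$ provides the wrap-around.

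First I would dispose of the claim $u_n^m=1$ by a one-line computation: $u_n^m = (tu_{n-1}t^{-1})^m = tu_{n-1}^mt^{-1} = 1$ using (1) for $u_{n-1}$. Next, I would iterate (4) to obtain the normal form $u_i = t^{i-1}u_1 t^{-(i-1)}$ for all $1\le i\le n-1$, and observe that by definition $u_n = tu_{n-1}t^{-1} = t^{n-1}u_1 t^{-(n-1)}$, so the same formula $u_i = t^{i-1}u_1 t^{-(i-1)}$ holds for $i=n$ as well. This is the crucial bookkeeping: every $u_i$ is a $t$-conjugate of $u_1$ with exponent matching its index (modulo $n$).

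With that in hand, (4) mod $n$ is immediate. The cases $1\le i\le n-2$ are given and $i=n-1$ is the definition of $u_n$; the only new case is $i=n$, which reads $tu_nt^{-1}=u_1$, and this follows from the normal form together with $t^n=1$:
\[
tu_nt^{-1} = t\cdot t^{n-1}u_1 t^{-(n-1)}\cdot t^{-1} = t^n u_1 t^{-n} = u_1.
\]

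Then I would verify (2) and (3) mod $n$ by the same rotational device. For (2), the new cases are $i=n-1$ and $i=n$. Conjugating the given relation $u_{n-2}u_{n-1}=q^2 u_{n-1}u_{n-2}$ by $t$ and using (4) mod $n$ produces the $i=n-1$ case; conjugating the $i=1$ relation by $t^{n-1}$ and using $t u_n t^{-1}=u_1$ produces the $i=n$ case. For (3), the new commutation relations are of the form $u_n u_j = u_j u_n$ with $2\le j\le n-2$. I would conjugate each given relation $u_1 u_k = u_k u_1$ (valid for $3\le k\le n-1$) by $t^{n-1}$; the first factor becomes $u_n$ and, since conjugation by $t^{n-1}$ sends $u_k\mapsto u_{k-1}$ (interpreting indices mod $n$), the second factor runs over $u_2,\ldots,u_{n-2}$, which is exactly what is needed.

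The steps are essentially mechanical; the only point that requires a moment's care is the wrap-around $tu_nt^{-1}=u_1$, so the main (mild) obstacle is being consistent about the index shift under conjugation by $t$, which is handled cleanly by the uniform normal form $u_i = t^{i-1}u_1 t^{-(i-1)}$ combined with $t^n=1$.
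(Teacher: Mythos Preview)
Your proof is correct and follows essentially the same approach as the paper: both arguments exploit that conjugation by $t$ cyclically permutes $u_1,\ldots,u_n$ (once $t^n=1$ provides the wrap-around $tu_nt^{-1}=u_1$), and then transport the known relations to obtain the new ones. Your presentation is slightly more systematic---you explicitly set up the uniform normal form $u_i=t^{i-1}u_1t^{-(i-1)}$ and carry out the conjugation for every missing case of (3), whereas the paper checks only the representative case $[u_n,u_{n-2}]$---but there is no substantive difference in method.
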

\begin{proof}
Since $t^{n-2}u_1t^{-n+2}=u_{n-1}$ and $t^n=1$ we have $t^{n-1}u_1t^{1-n}=u_{n}$ proving that (4) holds modulo $n$.  Next we see that $$u_{n-1}u_n=tu_{n-2}u_{n-1}t^{-1}=q^2tu_{n-1}t^{-1}tu_{n-2}t^{-1}=q^2u_{n-1}u_n$$ as desired, with $u_nu_{1}=q^2u_1u_n$ verified similarly.  For (3) it is enough to check that $u_n$ commutes with $u_{n-2}$ (with $n\geq 4$).  This is also straightforward: 
$$u_{n}u_{n-2}=tu_{n-1}t^{-1}tu_{n-3}t^{-1}=tu_{n-3}u_{n-1}t^{-1}=u_{n-2}u_n.$$
\end{proof}
Observe that the algebra $NES(m,n)$ is a finite dimensional semisimple algebra over $\Q(q)$.  Indeed, $NES(m,n)$ is essentially a group algebra.  Next we show that $\NB_n$ admits a representation in $NES(m,n)$.
\begin{theorem}
The map $\hat\varphi_n:\NB_n\to NES(m,n)^*$ by $\sigma_i\mapsto R_i(m)=\frac{1}{\sqrt m}\sum_{j=0}^{m-1}q^{j^2}u_i^j$ and $\tau\mapsto t$ is a group homomorphism.
\end{theorem}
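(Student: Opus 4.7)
The plan is to verify the defining relations of $\NB_n$ from Theorem~\ref{thm:defining relations} under $\hat\varphi_n$. By Lemma~\ref{relationslemma} it suffices to check (N2), (N1), (B1) for $i=1$, and (B2) for $i=1$ with $3\leq j\leq n-1$, so there are really only two essentially different computations to carry out.

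Relation (N2) is immediate: $\hat\varphi_n(\tau^{2n})=t^{2n}=(t^n)^2=1$ by condition (1) of $NES(m,n)$. The braid-type relations (B1) for $i=1$ and (B2) for $i=1,\ 3\leq j\leq n-1$ involve only $R_1(m),\ldots,R_{n-1}(m)$, which are built from $u_1,\ldots,u_{n-1}$; by conditions (2) and (3) of $NES(m,n)$ these generators satisfy exactly the defining relations of $ES(m,n-1)$. Hence the classical Jones--Gaussian calculation \cite{jones89, Gaussian} showing that $\sigma_i\mapsto R_i(m)$ defines a homomorphism $\varphi_n:\B_n\to ES(m,n-1)^*$ applies verbatim inside $NES(m,n)^*$; in particular the Gaussian-sum identity $R_1(m)R_2(m)R_1(m)=R_2(m)R_1(m)R_2(m)$ and the far-commutation relations are inherited without any new calculation.

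The substantive content is (N1), namely $tR_i(m)t^{-1}=R_{i+1}(m)$ for $1\leq i\leq n$ with indices mod $n$. For $1\leq i\leq n-2$, condition (4) of $NES(m,n)$ gives $tu_i^jt^{-1}=u_{i+1}^j$ for every $j$, so summing the Gaussian series over $j$ yields the claim termwise. For $i=n-1$ the identity holds by the very \emph{definition} $u_n:=tu_{n-1}t^{-1}$ made in Lemma~\ref{modnlemma}. For $i=n$, Lemma~\ref{modnlemma} (using $t^n=1$) establishes the mod-$n$ analogue $tu_nt^{-1}=u_1$, from which $tR_n(m)t^{-1}=R_1(m)$ follows by the same termwise summation. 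The $i=n$ case is the only one that genuinely exercises the cyclic structure of $NES(m,n)$, and it is the only point where the proof could get stuck, but Lemma~\ref{modnlemma} was engineered precisely for this purpose, so no additional computation is required.
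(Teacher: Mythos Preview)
Your proof is correct and follows essentially the same route as the paper: both invoke Lemma~\ref{relationslemma} to reduce to (N2), (N1), (B1) at $i=1$, and (B2) at $i=1$, cite \cite{Gaussian} for the braid-type relations among $R_1(m),\ldots,R_{n-1}(m)$, and use the defining relation $tu_it^{-1}=u_{i+1}$ together with $t^n=1$ for (N1) and (N2). Your treatment is slightly more explicit in breaking (N1) into the cases $i\leq n-2$, $i=n-1$, and $i=n$ (invoking Lemma~\ref{modnlemma} for the cyclic step), whereas the paper compresses this to ``from the definition of $t$''; the content is the same.
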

\begin{proof}
As shown in \cite[Proposition 3.1]{Gaussian}, the relation $\hat\varphi_n(\sigma_1\sigma_2\sigma_1)=\hat\varphi_n(\sigma_2\sigma_1\sigma_2)$, and $\hat\varphi_n(\sigma_1\sigma_j)=\hat\varphi_n(\sigma_j\sigma_1)$ for $2<j<n$ are true. From the definition of $t$, $\hat\varphi_n(\tau\sigma_i\tau^{-1})=\hat\varphi_n(\sigma_{i+1})$ and $\hat\varphi_n(\tau)^{2n}=1.$ Thus we have $\hat\varphi_n$ is a representation of $\NB_n$ into $NES(m,n)$.
\end{proof}

To obtain a representation of $\NB_n$ again let $V\cong\C^m$, with standard basis $\{e_0,\dots,e_{m-1}\}$ with $e_{i+m}=e_i$. Now define $U,T\in\text{End}(V^{\otimes2})$ by $U(e_i\otimes e_j)=q^{j-i}e_{i+1}\otimes e_{j+1}$ and $T(e_i\otimes e_j)=e_j\otimes e_i$.  Further define, for any $n\geq 2$, elements of $\Aut(V^{\otimes n})$:
$$X:=(T\otimes I^{\otimes n-2})(I\otimes T\otimes I^{n-3})\cdots(I^{\otimes n-2}\otimes T)$$ and $U_i:=I^{\otimes i-1}\otimes U\otimes I^{\otimes n-i-1}$ for each $1\leq i\leq n-1$.  Notice that $$X(e_{i_1}\otimes\cdots\otimes e_{i_n})=e_{i_n}\otimes e_{i_1}\otimes\cdots\otimes e_{i_{n-1}}.$$
\begin{prop}
The map $\Psi$ given on generators by $u_i\mapsto U_i$ and $t\mapsto X$ defines a representation of $NES(m,n)$ on $V^{\otimes n}$.  

\end{prop}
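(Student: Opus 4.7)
The plan is to verify the four defining relations of $NES(m,n)$ listed above (with Lemma~\ref{modnlemma} in mind but only the generators $u_1,\ldots,u_{n-1},t$ to check). Three of them are essentially free: the assignment $u_i \mapsto U_i$ for $1 \le i \le n-1$ is exactly the $*$-representation of $ES(m,n-1)$ constructed in \cite{Gaussian}, so
$$U_i^m = I_{V^{\otimes n}}, \qquad U_i U_{i+1} = q^2 U_{i+1} U_i, \qquad U_i U_j = U_j U_i \text{ for } |i-j|>1,$$
hold automatically. Thus the only new relations to check are $X^n = I_{V^{\otimes n}}$ and the shift relation (4).

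For $X^n = I$, I would simply unwind the definition. On a basis vector one computes
$$X(e_{i_1} \otimes \cdots \otimes e_{i_n}) = e_{i_n} \otimes e_{i_1} \otimes \cdots \otimes e_{i_{n-1}},$$
i.e., $X$ realises the cyclic shift of the tensor factors. Hence $X^n$ is the identity. (The product-of-transpositions formula for $X$ given before the Proposition is literally the standard expression for the long cycle in $\Sn_n$.)

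The key new computation is $X U_i X^{-1} = U_{i+1}$ for $1 \le i \le n-2$. Here $X^{-1}$ shifts in the reverse direction, $X^{-1}(e_{j_1} \otimes \cdots \otimes e_{j_n}) = e_{j_2} \otimes \cdots \otimes e_{j_n} \otimes e_{j_1}$. Applying $X^{-1}$ moves the entry that was in position $k$ (for $1<k\le n$) to position $k-1$, and the first entry to position $n$. Consequently $U_i$, which acts only on positions $i$ and $i+1$, now sees the pair that originally sat in positions $i+1$ and $i+2$. Applying $X$ then returns all entries (including the modified ones) to positions that are one higher, so the net effect on $e_{i_1}\otimes\cdots\otimes e_{i_n}$ is
$$X U_i X^{-1}(e_{i_1}\otimes\cdots\otimes e_{i_n}) = q^{i_{i+2}-i_{i+1}}\, e_{i_1}\otimes\cdots\otimes e_{i_{i+1}+1}\otimes e_{i_{i+2}+1}\otimes\cdots\otimes e_{i_n},$$
which is precisely the action of $U_{i+1}$. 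Thus (4) holds for $1 \le i \le n-2$, and since the algebra $NES(m,n)$ has no further defining relations among $u_1,\ldots,u_{n-1},t$, the map $\Psi$ extends to a well-defined algebra homomorphism.

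The main obstacle is nothing more than the index bookkeeping in the conjugation computation; everything else reduces to the already-established Gaussian representation. One small thing to note is that we never need to check a relation involving $u_n$ directly: $u_n$ is introduced in Lemma~\ref{modnlemma} as the abbreviation $t u_{n-1} t^{-1}$, and the compatibility of the auxiliary operator $U_n := X U_{n-1} X^{-1}$ with the expected modulo-$n$ versions of (2)--(3) then follows automatically from the $NES(m,n)$ relations we have already verified.
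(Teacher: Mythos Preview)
Your proposal is correct and follows essentially the same approach as the paper: verify the four defining relations of $NES(m,n)$ on the images $U_i$ and $X$. The only cosmetic differences are that the paper re-verifies $U^m=I$ by an explicit case analysis (rather than citing \cite{Gaussian} as you do) and reduces relation~(2) via (4) to the single case $U_1U_2=q^2U_2U_1$ checked in \cite{RW}; your appeal to the already-established $ES(m,n-1)$ representation covers the same ground.
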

Observe that we only need to verify relations (1)-(4) above.
\begin{proof}  
It is clear that $U_i$ commutes with $U_j$ if $|i-j|\neq 1$ proving (3). Also, $$U^m(e_i\otimes e_j)=\begin{cases} q^{(m-j)(j-i)}q^{(j-i-m)(j-i)}q^{(j-i)i}e_i\otimes e_j=e_i\otimes e_j &i\leq j\\
q^{(m-i)(j-i)}q^{(i-j-m)(j-i)}q^{(j-i)j}e_i\otimes e_j=e_i\otimes e_j&i>j.\end{cases}$$
Notice that $X$ has order $n$, hence (1) is satisfied. A straightforward calculation shows that $XU_iX\iv=U_{i+1}$, i.e. (4). From this we see that for (2) it is sufficient to check the last relation on $NES(m,n)$ for $U_1,U_2$ with $n=3$, which has been verified in \cite{RW}: $U_1U_2(e_i\otimes e_j\otimes e_k)
=q^2U_2U_1(e_i\otimes e_j\otimes e_k).$
\end{proof}

Observe that $\Psi\circ\hat{\varphi}:\NB_n\rightarrow \Aut(V^{\ot n})$ gives a local representation of $\NB_n$.  The case $m=2$ has the realisation given in Example \ref{isingexample}, and conjecturally has finite image.
More generally, in \cite{Gaussian} it is shown that the restricted image $\hat{\varphi}(\B_n)$ in $ES(m,n-1)$ is finite. We wish to follow a similar approach to show that the image of $\hat\varphi_n(\NB_n)$ is also finite. 
Notice that the monomials in $NES(m,n)$ have the following normal form: $t^\alpha u_1^{\alpha_1}\cdots u_n^{\alpha_n}$ where $0\leq \alpha<n$ and  $0\leq\alpha_i<m$. In fact, we see that these $n(m)^n$ monomials form a basis for $NES(m,n)$ over $\Q(q)$.   
The structure of $NES(m,n)$ is more complicated than $ES(m,n-1)$, which is actually simple for $n$ odd and has exactly $m$ simple components for $n$ even \cite{RW}.
We  let $\hat\varphi_n(\NB_n)\subset NES(m,n)$ act on the span of $\hat U=\{u_1^{\alpha_1}\cdots u_n^{\alpha_n}\}$ by conjugation.  Since conjugation by $t$ obviously permutes this spanning set, 
we first show that the conjugation action of $R_i(m)$ also permutes this set.  The same approach as in \cite{Gaussian} works here: (note we may omit the scalar $\frac{1}{\sqrt m}$ in $R_i(m)$ in these calculations):

\begin{eqnarray*}
qu_i\iv u_{i+1}R_i(m) & = & qu_i\iv u_{i+1}\sum_{j=0}^{m-1}q^{j^2}u_i^j  =  q\iv u_{i+1}u_i\iv\sum_{j=0}^{m-1}q^{j^2}u_i^j\\
 & = & q\iv\sum_{j=0}^{m-1}q^{j^2}u_{i+1}u_i^{j-1}  =  q\iv\sum_{j=0}^{m-1} q^{j^2}(q^{-2(j-1)})u_i^{j-1}u_{i+1}\\
 & = & \sum_{j=0}^{m-1}q\iv q^{j^2}q^{-2j+2}u^{j-1}_iu_{i+1}\\
 & = & \left(\sum_{j=0}^{m-1}q^{(j-1)^2}u_i^{j-1}\right)u_{i+1}=R_i(m)u_{i+1}\end{eqnarray*} and 
 \begin{eqnarray*}
 q\iv u_{i-1}u_iR_i(m) & = & qu_iu_{i-1}\sum_{j=0}^{m-1}q^{j^2}u_i^j\\
 & = & qu_i\sum_{j=0}^{m-1}q^{j^2}u_{i-1}u_i^j=qu_i\sum_{j=0}^{m-1}q^{j^2}q^{2j}u_i^ju_{i-1}\\
 & = & \sum_{j=0}^{m-1}qq^{j^2}q^{2j}u_i^{j+1}u_{i-1}\\
 & = & \left(\sum_{j=0}^{m-1}q^{(j+1)^2}u_i^{j+1}\right)u_{i-1}=R_i(m)u_{i-1}.
\end{eqnarray*}
This shows that $R_i(m)u_{i+1}R_i(m)\iv=qu_i\iv u_{i+1}$ and $R_i(m)u_{i-1}R_i(m)\iv=q\iv u_{i-1}u_i$. Thus conjugation by $R_i(m)$ permutes the spanning set $\hat U$ up to scalars that are roots of unity (i.e. powers of $q$).  Thus $\hat\varphi_n(\NB_n)$ is finite modulo the center.
The subalgebra of $NES(m,n)$ generated by $\hat\varphi_n(\NB_n)$ is semisimple, so that the faithful representation of $\hat\varphi_n(\NB_n)$ on $NES(m,n)$ decomposes into full matrix algebras.  Thus any element $x$ of the center of $\hat\varphi_n(\NB_n)$ acts via a scalar matrix on each irreducible subrepresentation.  But since the generators of $\hat\varphi_n(\NB_n)$ have determinant a root of unity (of degree $m$ or $n$), the scalar $x$ is also a root of unity of degree only depending on $m$ and $n$ (indeed the degree of each irreducible representation depends only on $m,n$).  Thus the center of $\hat\varphi_n(\NB_n)$ is a finite group and has finite index, so $\hat\varphi_n(\NB_n)$ is a finite group.

\subsection{Quaternionic Representation} Similar to the Gaussian Braided Vector Space, the idea is to take a finite group (in this case $Q_8$) and consider the group algebra with $n$ copies of the group, where the generators will interact with `close' neighbours in a specific way, but commute with`far' neighbours. Let $q=e^{2i\pi/6}$ and $[,]$ denote the group commutator. Similar to the Gaussian case, we take the algebra $Q_n$ defined in \cite{quat}, and define $\mathcal Q_n$ to be (almost) a semi-direct product of $Q_n$ with $\Z_n$ the algebra generated by $t,u_1,\dots,u_{n-1},v_1,\dots,v_{n-1}$ with the following relations:
\begin{enumerate}
\item $u_i^2=v_i^2=-1$ for all $i$,
\item $[u_i,v_j]=-1$ if $|i-j|<2$,
\item $[u_i,v_j]=1$ if $|i-j|\geq2$,
\item $[u_i,u_j]=[v_i,v_j]=1=t^{n}$,
\item $tu_it\iv=u_{i+1}$, and $tv_it\iv=v_{i+1}$ for all $i$.
\end{enumerate} As in the Gaussian case, we have the following lemma.
\begin{lemma} Defining, in $\mathcal Q_n$, $u_n:=tu_{n-1}t^{-1}$ and $v_n:=tv_{n-1}t^{-1}$, $v_n,u_n$ satisfy relations $(1)-(5)$ with indices $\mod n$ (defining $v_0=v_n$ and $v_{n+1}=v_1$).
\end{lemma}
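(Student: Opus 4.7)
The plan is to mimic Lemma~\ref{modnlemma}: first upgrade the given conjugation relation (5) to hold modulo $n$, then systematically reduce every new relation involving $u_n$ or $v_n$ to a relation among $u_1,\ldots,u_{n-1},v_1,\ldots,v_{n-1}$ by conjugating by a suitable power of $t$.

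First I would iterate (5) to obtain $t^k u_1 t^{-k}=u_{k+1}$ and $t^k v_1 t^{-k}=v_{k+1}$ for $1\leq k\leq n-2$. Combined with the definitions $u_n:=tu_{n-1}t^{-1}$ and $v_n:=tv_{n-1}t^{-1}$, this yields $u_n=t^{n-1}u_1 t^{-(n-1)}$ and $v_n=t^{n-1}v_1 t^{-(n-1)}$. Since $t^n=1$, one more conjugation by $t$ gives $tu_nt^{-1}=u_1$ and $tv_nt^{-1}=v_1$, so (5) holds modulo $n$. Relation (1) for $u_n$ and $v_n$ is then immediate: $u_n^2=tu_{n-1}^2t^{-1}=-1$, and likewise $v_n^2=-1$.

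For (2), (3), (4) modulo $n$, the general device is as follows: given any new commutation relation $[x,y]=\pm 1$ with $x$ or $y$ in $\{u_n,v_n\}$, choose an integer $s\in\{0,\ldots,n-1\}$ such that conjugation by $t^s$ sends the pair $(x,y)$ to a pair of generators whose indices both lie in $\{1,\ldots,n-1\}$, and then invoke the known relation there. For instance, $t[u_n,v_1]t^{-1}=[u_1,v_2]=-1$ gives $[u_n,v_1]=-1$; for $[u_n,v_j]$ with $2\leq j\leq n-2$, conjugation by $t$ produces $[u_1,v_{j+1}]$ with $|1-(j+1)|\geq 2$, which equals $1$ by (3); and $[u_n,v_n]=t[u_{n-1},v_{n-1}]t^{-1}=-1$ by (2). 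The remaining wraparound commutators, including $[u_1,v_n]$, $[u_n,v_{n-1}]$, and the $[u_n,u_j]$, $[v_n,v_j]$ commutators required for (4), are handled identically by shifting with $t^{\pm 1}$; relation (4) is in fact easier since no signs appear.

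The only real work is bookkeeping: one must confirm that for each pair of mod-$n$ indices appearing in a new relation there is an $s$ for which both shifted indices avoid $0\pmod n$, which is automatic for $n\geq 3$ (and the $n=2$ case is degenerate since (3) becomes vacuous). I expect this step---enumerating the finite list of wraparound cases and choosing the right $s$ for each---to be the most tedious part, but no conceptual obstacle is hidden there: once (5) is established mod $n$, conjugation by $t$ implements a genuine $\Z_n$-automorphism of both the $u$- and $v$-subalgebras, so every cyclic translate of an original relation holds by fiat.
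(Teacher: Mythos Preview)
Your proposal is correct and follows essentially the same approach as the paper's proof: both establish (1) and (5) modulo $n$ first, then reduce every new commutator involving $u_n$ or $v_n$ to a known one among $u_1,\ldots,u_{n-1},v_1,\ldots,v_{n-1}$ by conjugating with a suitable power of $t$. The paper is slightly more explicit in writing out the individual wraparound cases $[u_n,v_1]$, $[u_n,v_{n-1}]$, $[u_n,v_n]$, whereas you phrase the same verifications as instances of a general conjugation device, but the content is identical.
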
 
\begin{proof} We must check that $u_n$ and $v_n$ also satisfy the relations $(1)-(5)$. For $(1)$, note that $u_n^2=(tu_{n-1}t^{-1})^2=tu_{n-1}^2t^{-1}=-1=tv_{n-1}^2t^{-1}=(tv_{n-1}t^{-1})^2=v_n^2.$ The relation $(5)$ follows from the definition of $u_n$,$v_n$ and $t^n=1$; $tu_nt^{-1}=t(tu_{n-1}t^{-1})t^{-1}=t(t^{n-1}u_1t^{-n+1})t^{-1}=t^nu_1t^{-n}=u_1$ and similarly $tv_nt^{-1}=v_1.$ For $(2)$ and $(3)$, we will first consider $[u_n,v_j]$. Doing so gives the following:
\begin{eqnarray*}
[u_n,v_j] = [tu_{n-1}t^{-1},v_j] & = & tu_{n-1}t^{-1}v_jtu_{n-1}^{-1}t^{-1}v_j^{-1}\\
 & = & tu_{n-1}v_{j+1}u_{n-1}^{-1}v_{j+1}^{-1}t^{-1}\\
 & = & t[u_{n-1},v_{j+1}]t^{-1}.
\end{eqnarray*}
Similarly $[u_j,v_n]=t[u_{j+1},v_{n-1}]t^{-1}$. Thus the relation $(3)$ holds for $|i-j|\mod n\geq2$. For $(2)$ we need to check the above equation, $j=1,n-1,n$.
\begin{align*} [u_n,v_1] & = [t^{-1}u_1t,v_1]=t^{-1}[u_1,v_2]t=-1\\
[u_n,v_{n-1}] & =  [tu_{n-1}t^{-1},tv_{n-2}t^{-1}]=t[u_{n-1},v_{n-2}]t^{-1}=-1\\
[u_n,v_n] & =  [tu_{n-1}t^{-1},tv_{n-1}t^{-1}]=t[u_{n-1},v_{n-1}]t^{-1}=-1.
\end{align*}
Hence $(2)$ holds for $|i-j|\mod n<2$. Lastly $(4)$ holds from $[u_n,u_j]=t[u_{n-1},u_j]t^{-1}$ and $[v_n,v_j]=t[v_{n-1},v_j]t^{-1}$.
\end{proof}
\begin{theorem}The map $\xi_n:\NB_n\to\mathcal Q_n^\times$ given by 
$\xi_n(\sigma_i)=\frac{-1}{2q}(1+u_i+v_i+u_iv_i)$ and $\xi_n(\tau)=t$ defines a group homomorphism.
\end{theorem}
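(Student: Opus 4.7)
The plan is to invoke Lemma~\ref{relationslemma} so that only the relations (N2), (N1), (B1) for $i=1$, and (B2) for $i=1$, $3\le j\le n-1$ need verification; the preceding lemma has already packaged the compatibility of the extended generators $u_n,v_n$ with these relations modulo~$n$, which is what makes this reduction legitimate.

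First I would dispose of the two purely cyclic relations. For (N2), $\xi_n(\tau)^{2n}=t^{2n}=(t^n)^2=1$ from relation~(4) of $\mathcal{Q}_n$. For (N1), I would use relation~(5) (together with its extension to all $i$ mod $n$ established in the previous lemma): $t$ conjugates $u_i\mapsto u_{i+1}$ and $v_i\mapsto v_{i+1}$, hence by linearity and the fact that $\xi_n(\sigma_i)=\frac{-1}{2q}(1+u_i+v_i+u_iv_i)$ is a polynomial in $u_i,v_i$ alone, we get $t\,\xi_n(\sigma_i)\,t^{-1}=\xi_n(\sigma_{i+1})$.

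Next I would check (B2) for the specified range. If $3\le j\le n-1$ then $|1-j|\ge 2$, so relations~(3) and~(4) give that each of $u_1,v_1$ commutes with each of $u_j,v_j$. Writing out $\xi_n(\sigma_1)\xi_n(\sigma_j)$ and commuting the individual monomial factors past each other shows $\xi_n(\sigma_1)\xi_n(\sigma_j)=\xi_n(\sigma_j)\xi_n(\sigma_1)$.

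The main obstacle is (B1), namely $\xi_n(\sigma_1)\xi_n(\sigma_2)\xi_n(\sigma_1)=\xi_n(\sigma_2)\xi_n(\sigma_1)\xi_n(\sigma_2)$. My plan is \emph{not} to expand this by brute force in $\mathcal{Q}_n$, but rather to observe that the subalgebra of $\mathcal{Q}_n$ generated by $u_1,v_1,u_2,v_2$ satisfies exactly the defining relations~(1)--(3) of the algebra $Q_n$ of \cite{quat} restricted to these four generators (the generator $t$ plays no role in (B1)). Since the braid identity for the element $\frac{-1}{2q}(1+u_i+v_i+u_iv_i)$ was established in \cite{quat} inside $Q_n$, transporting that identical calculation into the subalgebra $\langle u_1,v_1,u_2,v_2\rangle\subset\mathcal{Q}_n$ yields (B1) here. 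Combined with the three easy verifications above, all defining relations of $\NB_n$ are satisfied, so $\xi_n$ is a group homomorphism.
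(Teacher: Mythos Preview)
Your proposal is correct and follows essentially the same route as the paper's proof: both invoke Lemma~\ref{relationslemma} to reduce to (N1), (N2), (B1) for $i=1$, and (B2) for $i=1$, $3\le j\le n-1$; both cite \cite{quat} for the braid relation (B1), and both dispatch (N1) and (N2) directly from the defining relations (4) and (5) of $\mathcal{Q}_n$. You give a bit more detail on (B2) than the paper, which simply folds it into the reference to \cite{quat}, but the overall structure is the same.
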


\begin{proof} Similarly as in the Gaussian case above, we use Lemma \ref{relationslemma} and \cite{quat} to reduce to checking $\xi_n(\tau\sigma_i\tau^{-1})=\xi_n(\sigma_{i+1})$ and $[\xi_n(\tau)]^{2n}=1$.  These are both immediate from the (last two) relations in $\mathcal{Q}_n$.
\end{proof}

Interestingly, $\QQ_n$ does not have an obvious local representation.  Instead, we obtain a $3$-local representation,  (see \cite[Theorem 5.28]{GHR}).  On the other hand, we can easily show that the image $\xi_n(\NB_n)$ is a finite group. First we show that,the conjugation action on the subalgebra $\hat{\mathcal{Q}}_n$ generated by $u_1,\ldots,u_n,v_1,\ldots,v_n$ is finite as follows.   Observe that $\hat{\QQ}_n$ is spanned by monomials of the form $$u_1^{\epsilon_1}\cdots u_n^{\epsilon_n}v_1^{\nu_1}\cdots v_n^{\nu_n}$$ where nonzero $\epsilon_i,\nu_i\in\{0,\pm 1\}$. The action of $\xi_n(\tau)=t$ obviously permutes this generating set. We can now compute, with $k=i\pm1$:
\begin{eqnarray*}
u_{i}\xi_n(\sigma_i) & = & \frac{-1}{2q}u_i(1+u_i+v_i+u_iv_i) = \frac{-1}{2q}(u_i-1+u_iv_i-v_i)\\
 & = & \frac{-1}{2q}(v_iu_iv_i+u_iv_iu_iv_i+u_iv_i+u_iu_iv_i) = \frac{-1}{2q}(v_i+u_iv_i+1+u_i)u_iv_i)\\ & = & \xi_n(\sigma_i)u_iv_i\end{eqnarray*} and
 \begin{eqnarray*}
v_i\xi_n(\sigma_i) & = & \frac{-1}{2q}v_i(1+u_i+v_i+u_iv_i)=\xi_n(\sigma_i)u_i\\
 & &\\
u_k\xi_n(\sigma_i) & = & \frac{-1}{2q}u_k(1+u_i+v_i+u_iv_i)= \xi_n(\sigma_i)u_kv_i\\
 & & \\
v_k\xi_n(\sigma_i) & = & \frac{-1}{2q}v_k(1+u_i+v_i+u_iv_i)=\xi_n(\sigma_i)(-u_iv_iv_k).
 \end{eqnarray*}

 Thus the conjugation action of $\xi_n(\NB_n)$ permutes a spanning set up to roots of unity so that $\xi_n(\NB_n)$ is finite modulo its center.

Now again, as in the Gaussian case we can see that the $\QQ_n$ is a finite dimensional semisimple algebra and the restriction to the center of $\xi_n(\NB_n)$ on any irreducible subrepresentation of the faithful regular representation gives a scalar of finite order, hence $\xi_n(\NB_n)$ has finite center and is thus a finite group.
\section{Extending low-dimensional $\B_n$ representations}
\label{ss:low-d}

There are gaps in the irreducible representation degrees of $\B_n$ (see \cite{LRagt} and references therein): for example $\B_n$ has no irreducible representations of dimension $2\leq d\leq n-3$, for $n\geq 5$.

\begin{lemma}
Let $(\rho,V)$ to be an irreducible $\NB_n$ representation, $w\in V$, and $0<\alpha$ be minimal such that $\tau^{-\alpha}w\in span\{\tau^{-\gamma}w\space|\space0\leq\gamma<\alpha\}$. Then $\tau^{-(\alpha+1)}w\in span\{\tau^{-\gamma}w\space|\space0\leq\gamma<\alpha\}$.
\end{lemma}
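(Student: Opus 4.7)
The plan is short: the conclusion should follow from a one-step manipulation of the given linear dependence. By the minimality of $\alpha$ we can fix scalars $c_0, c_1, \ldots, c_{\alpha-1}$ such that
\[ \rho(\tau^{-\alpha})w \;=\; \sum_{\gamma=0}^{\alpha-1} c_\gamma\, \rho(\tau^{-\gamma})w. \]
The idea is to apply $\rho(\tau^{-1})$ to this identity to shift every index by one, and then use the identity itself to eliminate the single ``bad'' term that appears on the right.

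Carrying this out, applying $\rho(\tau^{-1})$ to both sides yields
\[ \rho(\tau^{-(\alpha+1)})w \;=\; \sum_{\gamma=0}^{\alpha-1} c_\gamma\, \rho(\tau^{-(\gamma+1)})w, \]
which expresses $\tau^{-(\alpha+1)}w$ as a linear combination of $\tau^{-1}w, \tau^{-2}w, \ldots, \tau^{-\alpha}w$. The only term not already in $\mathrm{span}\{\tau^{-\gamma}w \mid 0\le\gamma<\alpha\}$ is $c_{\alpha-1}\rho(\tau^{-\alpha})w$; substituting the hypothesised relation for $\rho(\tau^{-\alpha})w$ replaces it by a combination of $w,\tau^{-1}w,\ldots,\tau^{-(\alpha-1)}w$, which completes the argument.

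I do not see a genuine obstacle here. In particular the irreducibility hypothesis on $\rho$ plays no role in the proof itself and neither do the defining relations of $\NB_n$; the argument is purely linear-algebraic and uses only that $\tau$ acts as an invertible operator. One should expect irreducibility (and the relation $\tau^{2n}=1$, which forces $\rho(\tau)$ to be diagonalisable with eigenvalues that are $2n$-th roots of unity) to be exploited downstream: iterating the lemma shows by induction that $\rho(\tau^{-k})w$ lies in $\mathrm{span}\{\tau^{-\gamma}w \mid 0\le\gamma<\alpha\}$ for every $k\ge 0$, so the $\tau$-cyclic subspace generated by $w$ has dimension exactly $\alpha$. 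That conclusion is the natural input to any low-dimensional classification arguing that a carefully chosen generalised eigenvector of $\rho(\tau)$ controls the whole representation.
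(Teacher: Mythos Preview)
Your proof is correct and follows exactly the same route as the paper: write $\tau^{-\alpha}w$ as a combination of the earlier $\tau^{-\gamma}w$, apply $\tau^{-1}$, and observe that the only term not already in the desired span is the $\tau^{-\alpha}w$ term, which the hypothesis handles. Your remark that irreducibility plays no role in this lemma is also accurate.
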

\begin{proof}
We have that $\tau^{-\alpha}w=\displaystyle\sum_{i=0}^{-\alpha+1}a_i\tau^{-i}w$ for $a_i$ scalars. Then
\begin{eqnarray*}
\tau^{-(\alpha+1)}w=\tau^{-1}(\tau^{-\alpha}w) & = & \tau^{-1}\left(\sum_{i=0}^{\alpha-1}a_i\tau^{-i}w\right) \\
 & = & \sum_{i=0}^{\alpha-1}a_i\tau^{-i-1}w\\
 & = & \left(\sum_{i=1}^{\alpha-1}a_{i-1}\tau^{-i}w\right)+\tau^{-\alpha}w
\end{eqnarray*}
which is in the span of $\{\tau^{-\gamma}w\space|\space0\leq\gamma<\alpha\}$.
\end{proof}\mbox{}

\begin{theorem}\label{thm:irrep-gaps}
Let $n\geq5$ and $(\rho,V)$ be an irreducible $\NB_n$ representation. If $\dim V= n-2$, then $\phi=\rho\vert_{\B_n}$ is also irreducible.
\end{theorem}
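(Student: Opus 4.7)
The plan is a proof by contradiction: assume $\phi := \rho|_{\B_n}$ is reducible. The dimension gap for $\B_n$ (no irreducibles of dimension $d$ with $2 \le d \le n-3$ for $n \ge 5$), combined with $\dim V = n-2$, forces every composition factor of $\phi$ to be one-dimensional. Choose a composition series $0 = V_0 \subset V_1 \subset \cdots \subset V_{n-2} = V$ and a compatible basis $e_1, \ldots, e_{n-2}$ in which each $\phi(\sigma_i)$ is upper triangular. Because every one-dimensional representation of $\B_n$ factors through the abelianization $\B_n^{\mathrm{ab}} \cong \Z$ (so the character is determined by its value on $\sigma_1$), and the $\sigma_i$ are pairwise conjugate in $\B_n$, the $(k,k)$-entry $\chi_k(\sigma_i)$ of $\phi(\sigma_i)$ is independent of $i$. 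Setting $\mu := \chi_1(\sigma_1)$, the vector $w := e_1$ satisfies $\phi(\sigma_i) w = \mu w$ for every $i = 1, \ldots, n-1$.

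Next, apply the preceding Lemma to $w$: there is a minimal $\alpha$ with $1 \le \alpha \le \dim V = n-2$ such that $W := \mathrm{span}\{\rho(\tau)^{-k} w : k \ge 0\}$ has basis $\{w, \rho(\tau)^{-1}w, \ldots, \rho(\tau)^{-(\alpha-1)}w\}$ and is $\rho(\tau)$-invariant. Iterating (N1) gives $\sigma_1 \tau^{-k} = \tau^{-k} \sigma_{1+k \pmod n}$ in $\NB_n$. For every $k \in \{0, \ldots, \alpha-1\} \subseteq \{0, \ldots, n-3\}$ we have $1+k \in \{1, \ldots, n-2\} \subseteq \{1, \ldots, n-1\}$, so $\rho(\sigma_{1+k}) w = \mu w$ and hence $\rho(\sigma_1) \rho(\tau)^{-k} w = \mu \rho(\tau)^{-k} w \in W$. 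Thus $\rho(\sigma_1)$ acts as $\mu I$ on $W$; in particular $W$ is $\rho(\sigma_1)$-invariant. Since the identities $\sigma_{1+j} = \tau^j \sigma_1 \tau^{-j}$ show that $\NB_n$ is generated by $\sigma_1$ and $\tau$, the subspace $W$ is $\NB_n$-invariant.

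Irreducibility of $\rho$ together with $W \ne 0$ forces $W = V$, so $\alpha = n-2$. The same calculation applied to $\sigma_2$ (whose shifted indices $2+k \in \{2, \ldots, n-1\}$ still avoid $n$ for $k \le n-3$) gives $\rho(\sigma_2) = \mu I_V$, so $\rho(\sigma_1) = \rho(\sigma_2) = \mu I_V$. Since $\sigma_n = \tau \sigma_{n-1} \tau^{-1}$ and all $\sigma_i$ ($i < n$) are pairwise conjugate in $\B_n$, all $\sigma_j$ are pairwise conjugate in $\NB_n$, so $\rho(\sigma_j) = \mu I_V$ for every $j = 1, \ldots, n$. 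Finally, $\rho(\tau)^{2n} = I$ makes $\rho(\tau)$ diagonalizable with eigenvalues among the $2n$-th roots of unity, so $V$ splits as a direct sum of one-dimensional $\rho(\tau)$-eigenspaces; each such line is automatically $\NB_n$-invariant because every $\rho(\sigma_j)$ is scalar. This contradicts the irreducibility of $\rho$ since $\dim V = n-2 \ge 3$.

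The main obstacle is choosing the right seed vector to feed into the preceding Lemma. The decisive observation is that, once the composition factors of $\phi$ are forced to be one-dimensional and one notes that characters of $\B_n$ evaluate equally on every $\sigma_i$, the top-of-the-flag eigenvector $w$ is a common $\mu$-eigenvector for $\phi(\sigma_1), \ldots, \phi(\sigma_{n-1})$. The rewriting $\rho(\sigma_i) \rho(\tau)^{-k} w = \rho(\tau)^{-k} \rho(\sigma_{i+k \pmod n}) w$ then keeps the $\tau$-orbit of $w$ closed under $\rho(\sigma_1)$ precisely because $1 + k \pmod n$ never equals $n$ when $k \le n-3$—exactly the bound forced by $\dim V = n-2$. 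This triggers the $\NB_n$-invariance of the $\tau$-cyclic subspace $W$ and drives the remainder of the argument.
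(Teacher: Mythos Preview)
Your proof is correct and follows essentially the same route as the paper's: produce a common $\B_n$-eigenvector $w$ from a one-dimensional subrepresentation (forced by the dimension gap), use the preceding Lemma to show the $\tau$-orbit of $w$ spans an $\NB_n$-invariant subspace on which $\rho(\sigma_1)$ is scalar because the shifted index $1+k$ stays in $\{1,\ldots,n-1\}$, and then derive a contradiction from irreducibility. Your step for $\sigma_2$ is in fact redundant---once $W=V$ you already have $\rho(\sigma_1)=\mu I_V$, and conjugation by $\rho(\tau)$ immediately makes every $\rho(\sigma_j)$ scalar---but this does not affect correctness.
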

\begin{proof}
Assume that $(V,\rho)$ is an irreducible $\NB_n$ representation and to the contrary that $\rho\vert_{\B_n}$ is not irreducible. So we have that there exists a proper nonempty subspace $W$ (of minimal dimension, $1\leq q<n-2$) of $V$ such that $(\phi\vert_{W},W)$ is a $\B_n$ representation. Note that $W$ being minimal dimension guarantees that $(\phi\vert_{W},W)$ is irreducible. Since $n\geq5$, we have that the only irreducible representations of $\B_n$ of dimension strictly less than $n-2$ are 1-dimensional. Let $W$ be spanned by the vector $w\in V$. Since $W$ is a $\B_n$ invariant space, we have that $\sigma_iw=\lambda_iw$ for all $1\leq i\leq n-1$ (i.e. $w$ is an eigenvector for all $\rho(\sigma_i)$). Since the $\sigma_i$ are conjugate to each other, we have that $w$ is an eigenvector of the same eigenvalue. Since $\tau^{2n}=1$ and $V$ is irreducible, we also have that $\rho(\tau)^{\pm n}=\pm1$. Note that $M=\{w,\tau^{-1}w,\cdots,\tau^{-n}w\}$ is linearly dependent and that the span of $M$ is $\tau$ invariant. We may extract a basis $\beta=\{\tau^{-\alpha_1}w,\tau^{-\alpha_2}w,\dots,\tau^{-\alpha_k}w|0\leq\alpha_1<\alpha_2<\cdots<\alpha_k\leq n\}$ for $Q=span(M)$. Since $Q$ is $\tau$ invariant, we may instead use the basis $\beta^\prime=\tau^{\alpha_1}\beta$. This gives us that $w\in\beta^\prime$. From the above lemma, we obtain $\beta^\prime=\{w,\tau^{-1}w,\dots,\tau^{-\alpha}w\}$ (where $\alpha< n-2$) is a basis for $Q$. The restriction that $\alpha<n-2$ is from the fact that $Q$ is a subspace of $V$, and therefore $\dim Q\leq n-2$.\\ Let $0\leq\gamma\leq\alpha<n-2$. This means that $\sigma_{\gamma+1}\in \B_n$. From the relation\linebreak $ \tau^{\gamma}\sigma_1\tau^{-\gamma}=\sigma_{\gamma+1}$, we get that $\sigma_1\tau^{-\gamma}w=\tau^{-\gamma}\sigma_{\gamma+1}w=\lambda(\tau^\gamma w)$. This gives us that $Q$ is also $\sigma_1$ invariant. Hence $Q$ is both $\tau$ and $\sigma_1$ invariant, and therefore $\NB_n$ invariant. Since $V$ was irreducible, we have that $Q=V$. This means that $\beta^\prime$ is also a basis for $V$, and in this basis, $\rho(\sigma_1)=\lambda\cdot id_V$. Hence $$\rho(\sigma_2)=\rho(\tau)\rho(\sigma_1)\rho(\tau\iv)=\lambda\rho(\tau)\rho(\tau\iv)=\lambda\cdot id_V=\rho(\sigma_1).$$ This gives us that $\rho(\tau)\rho(\sigma_1)=\rho(\sigma_1)\rho(\tau)$. Which would give us a contradiction that $(\rho,V)$ is not an irreducible $\NB_n$ representation.
\end{proof}\mbox{}

We can now show that $\NB_n$ also has gaps in its irreducible representation degrees:
\begin{cor}
For $n\geq5$, the only irreducible representations of $\NB_n$ of dimension at most $n-3$ are 1-dimensional.
\end{cor}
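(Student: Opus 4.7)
The plan is to adapt the argument of Theorem~\ref{thm:irrep-gaps} so that it covers all dimensions strictly less than $n-2$. Suppose, for contradiction, that $(\rho,V)$ is an irreducible $\NB_n$-representation with $2\le\dim V\le n-3$. First consider $\rho|_{\B_n}$. If it were irreducible, it would be an irreducible $\B_n$-representation of dimension in the forbidden interval $[2,n-3]$, contradicting the gap result recalled at the start of Section~\ref{ss:low-d}. Hence $\rho|_{\B_n}$ is reducible; pick a nonzero proper $\B_n$-subrepresentation $W\subset V$ of minimal dimension. Then $\rho|_W$ is irreducible with $\dim W<\dim V\le n-3$, so the same $\B_n$ gap forces $\dim W=1$.

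From here I would repeat the proof of Theorem~\ref{thm:irrep-gaps} almost verbatim. Write $W=\operatorname{span}(w)$. Because the generators $\sigma_1,\ldots,\sigma_{n-1}$ are mutually conjugate inside $\B_n$, they must act on the one-dimensional space $W$ through a common scalar $\lambda$. Let $Q$ be the span of the $\tau$-orbit of $w$; this is $\tau$-invariant since $\rho(\tau)^{2n}=I_V$. The lemma preceding Theorem~\ref{thm:irrep-gaps} then gives $Q$ a basis of the form $\{w,\tau^{-1}w,\ldots,\tau^{-\alpha}w\}$ with $\alpha+1=\dim Q\le\dim V\le n-3$, hence $\alpha\le n-4$. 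For every $0\le\gamma\le\alpha$ the index $\gamma+1$ is at most $n-3$, so $\sigma_{\gamma+1}\in\B_n$ and relation (N1) yields
$\sigma_1\,\tau^{-\gamma}w=\tau^{-\gamma}\sigma_{\gamma+1}w=\lambda\,\tau^{-\gamma}w$; thus $Q$ is also $\sigma_1$-invariant.

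Since $\NB_n$ is generated by $\sigma_1$ and $\tau$ (the remaining $\sigma_i$ being $\tau$-conjugates of $\sigma_1$ by (N1)), the subspace $Q$ is $\NB_n$-invariant, and irreducibility forces $Q=V$. In the basis above $\rho(\sigma_1)=\lambda I_V$, and conjugation by powers of $\tau$ gives $\rho(\sigma_i)=\lambda I_V$ for every $i$. So $\rho(\B_n)$ acts by scalars; every $\rho(\tau)$-eigenspace is then $\NB_n$-stable, and since $\rho(\tau)^{2n}=I_V$ the operator $\rho(\tau)$ is diagonalizable. Therefore $V$ decomposes as a direct sum of one-dimensional $\NB_n$-subrepresentations, forcing $\dim V=1$ and contradicting $\dim V\ge 2$.

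The main (and essentially the only) subtlety is the bookkeeping on indices: one must ensure that the conjugation identity $\tau^{\gamma}\sigma_1\tau^{-\gamma}=\sigma_{\gamma+1}$ produces a generator $\sigma_{\gamma+1}$ that actually lies in the subgroup $\B_n=\langle\sigma_1,\ldots,\sigma_{n-1}\rangle$ rather than in the larger generating set that includes $\sigma_n$. The sharper bound $\alpha+1\le n-3$ (coming from $\dim V\le n-3$ rather than $n-2$) makes this automatic, so the proof of Theorem~\ref{thm:irrep-gaps} transplants directly with no additional work.
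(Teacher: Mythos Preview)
Your argument is correct and follows essentially the same route as the paper's proof: both reduce $\rho|_{\B_n}$ using the $\B_n$ dimension gap, find a one-dimensional $\B_n$-subrepresentation, and then rerun the proof of Theorem~\ref{thm:irrep-gaps} with the sharper bound $\alpha+1\le n-3$. The only cosmetic difference is in the endgame: the paper simply observes that $\rho(\sigma_1)=\lambda I_V$ forces $\rho(\NB_n)$ to be abelian and hence reducible, while you extract the contradiction by diagonalizing $\rho(\tau)$ and exhibiting one-dimensional invariant subspaces; these are equivalent.
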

\begin{proof}
Assume to the contrary that $(\rho,V)$ is an irreducible representation of $\NB_n$ with $2\leq\dim V<n-2$. Note that $\rho|_{\B_n}$ can not be irreducible, since $\B_n$ has no irreducible representations of dimension between 2 and $n-3$. Hence there exists a 1 dimensional subrepresentaion of $\B_n$. Following the proof of Theorem \ref{thm:irrep-gaps}, we would get that $\rho(\NB_n)$ is abelian, and therefore not irreducible.
\end{proof}\mbox{}

The following theorem is similar to the one above it, but it has the additional assumption that $\rho\vert_{\B_n}$ is completely reducible.
\begin{theorem}
Let $n\geq 5$ and $(\rho,V)$ be an irreducible $\NB_n$ representation. If $\dim V=n-1$ and $\rho|_{\B_n}$ is completely reducible, then $\rho|_{\B_n}$ is also irreducible.
\end{theorem}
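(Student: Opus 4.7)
The plan is to mimic closely the proof of Theorem~\ref{thm:irrep-gaps}, but with one extra combinatorial observation at the start to locate a $1$-dimensional $\B_n$-subrepresentation inside $V$. So suppose for contradiction that $\rho|_{\B_n}$ is reducible. By the hypothesis of complete reducibility, $V$ decomposes as a direct sum of irreducible $\B_n$-summands. Since $n\geq 5$, the recalled dimension gap says every irreducible $\B_n$-summand has dimension $1$, $n-2$, or $\geq n-1$; so with $\dim V = n-1$ the only admissible decomposition types are either $V\cong (n-1)\cdot\mathbf{1}$ (a sum of one-dimensional summands) or $V\cong W\oplus U$ with $\dim W=n-2$ irreducible and $\dim U=1$. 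In both cases $V$ contains a $1$-dimensional $\B_n$-invariant line, say $\C w$.

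Because all $\sigma_i$ ($1\leq i\leq n-1$) are pairwise conjugate in $\B_n$, the scalar eigenvalues $\lambda_i$ they produce on the invariant line $\C w$ all coincide: if $\sigma_{i+1}=\beta\sigma_i\beta^{-1}$ with $\beta\in \B_n$, then $\beta$ acts on $\C w$ by some scalar $\mu$, whence $\rho(\sigma_{i+1})w=\mu\rho(\sigma_i)\mu^{-1}w=\lambda_i w$. Write $\lambda$ for this common eigenvalue. Now I apply the preceding lemma to the vector $w$: there exists a minimal $\alpha\geq 1$ so that $Q:=\mathrm{span}\{w,\tau^{-1}w,\ldots,\tau^{-(\alpha-1)}w\}$ is a $\tau$-invariant subspace (and $\alpha\leq \dim V=n-1$).

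The key step, as in the proof of Theorem~\ref{thm:irrep-gaps}, is to show $Q$ is also $\sigma_1$-invariant. For $0\leq\gamma\leq\alpha-1\leq n-2$ the relation (N1) yields $\sigma_1\tau^{-\gamma}w=\tau^{-\gamma}\sigma_{\gamma+1}w$, and since $\gamma+1\leq n-1$ we have $\sigma_{\gamma+1}\in\B_n$, so $\sigma_{\gamma+1}w=\lambda w$; hence $\rho(\sigma_1)\tau^{-\gamma}w=\lambda\tau^{-\gamma}w\in Q$. Using $\rho(\sigma_k)=\rho(\tau)^{k-1}\rho(\sigma_1)\rho(\tau)^{-(k-1)}$ this then makes $Q$ invariant under all of $\rho(\NB_n)$. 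Since $V$ is irreducible and $Q\neq 0$, we conclude $Q=V$, so $\alpha=n-1$ and in the basis $\{w,\tau^{-1}w,\ldots,\tau^{-(n-2)}w\}$ the operator $\rho(\sigma_1)$ equals $\lambda\cdot\mathrm{id}_V$.

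To finish, I conjugate by powers of $\tau$ to deduce $\rho(\sigma_i)=\lambda\cdot\mathrm{id}_V$ for every $i$ (including $i=n$). Then $\rho(\tau)$ commutes with every $\rho(\sigma_i)$, so $\rho(\NB_n)$ is abelian. An abelian representation over $\C$ that is irreducible must be $1$-dimensional, contradicting $\dim V=n-1\geq 4$. Thus the initial assumption fails and $\rho|_{\B_n}$ is irreducible. The only genuinely new ingredient beyond Theorem~\ref{thm:irrep-gaps} is the dimension-counting observation in the first paragraph that rules out the decomposition being something like $W\oplus W'$ with both summands of dimension $n-2$, and which guarantees the existence of a $1$-dimensional summand even in the mixed case $W\oplus U$; this is where complete reducibility of $\rho|_{\B_n}$ is essential, since without it one would only get an $(n-2)$-dimensional subrepresentation and no eigenvector $w$ of the $\sigma_i$ to feed into the lemma.
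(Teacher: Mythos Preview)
Your proof is correct and follows essentially the same approach as the paper: both first use the dimension gap for $\B_n$ together with complete reducibility to produce a $1$-dimensional $\B_n$-invariant line, and then run the argument of Theorem~\ref{thm:irrep-gaps} verbatim, the only change being that the bound on $\alpha$ loosens from $\alpha<n-2$ to $\alpha\leq n-2$, which still keeps $\sigma_{\gamma+1}\in\B_n$ for all relevant $\gamma$. Your write-up simply spells out a few details (the common eigenvalue argument, the abelian-image contradiction) that the paper leaves implicit.
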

\begin{proof}
Assume to the contrary that $\rho|_{\B_n}$ is completely reducible and not irreducible. Then we have two possibilities, $V=\bigoplus_{i=1}^{n-1}W_i$ or $V=W\oplus U$ where $W,W_i$ are all 1-dimensional subrepresentations, and $U$ is an n-2 irreducible subrepresentation of $V$ for $\B_n$. In either case, we have the existence of a 1-dimensional subrepresentation. From here, we follow the proof of Theorem \ref{thm:irrep-gaps}, and note that the inequality that $\alpha<n-2$ becomes $\alpha\leq n-2$. However, this still ensures that $Q$ is $\sigma_1$ invariant, because again, $\sigma_{n-1}=\tau^{n-2}\sigma_1\tau^{-n+2}$. So again, we would get the contradiction that $(\rho,V)$ is not an irreducible representation of $\NB_n$.
\end{proof}

\begin{remark}Consider 
%
the unreduced Burau representation, which is defined as follows: $\sigma_i\mapsto\left(\begin{array}{ccc}I_{i-1}&  & \\  &\begin{matrix}1-t&t\\1&0\end{matrix}& \\  & & I_{n-i-1}\end{array}\right)$.  It can easily be verified that the $(n\times 1)$ vector of all 1's is fixed by all of the $\sigma_i'$s. Consider the mapping $\tau\to\begin{pmatrix}0 & a^{n-1}\\ \frac{1}{a}\cdot I_{n-1}&0\end{pmatrix}$ where $I_{n-1}$ is the $n-1\times n-1$ identity. It can be checked that $\rho(\tau^{-1}\sigma_i\tau)=\rho(\sigma_{i+1})$ and $\rho(\tau)^{2n}=I_n$. Thus it gives us a representation of $\NB_n$. Note that any invariant subspace of $\rho(\NB_n)$ will also be invariant under $\rho(\B_n)$. It is known that the Burau representation is reducible with invariant subspaces of dimension $1$ and $n-1$. The 1 dimensional subrepresentation is spanned by the vector of 1's. The other is the subspace of $\C^n$ of all vectors whose entries add up to 0. If $a\neq1$, then we get that the vector of $1'$s is not fixed by $\tau$. If $a^n\neq1$, then we get that $\tau$ does not fix the $n-1$-dimensional subspace. This means that if $\NB_n$ has no invariant subspaces. Therefore if $a^n\neq1$, then the extension of the Burau representation described above is an irreducible representation of $\NB_n$ whose restriction to $\B_n$ is reducible.
As this shows, there exist irreducible representations of $\NB_n$ of dimension $n$ whose restriction to $\B_n$ is no longer irreducible.
\end{remark}

\subsection{Irreducible Representations of dimension 2}

From the fact that $\tau$ has order $~2n$, we may assume that we have chosen a basis for $V$ such that $\rho(\tau)=\begin{pmatrix}t_1&0\\0&t_2\end{pmatrix}$ where $~t_1,t_2$ are $2n^{th}$ roots of unity. As stated before, for all $n\geq5$, there are no irreducible 2 dimensional representations. This means we need only consider $n=2,3,$ and $4$. Since we are wanting irreducible reps, we have that $t_1\neq t_2$. Similarly we have that $\rho(\sigma_1)$ is not upper or lower triangular, as otherwise $(1,0)$ or $(0,1)$ would generate an invariant subspace. Due to rescaling, we may assume that $\rho(\sigma_1)=\begin{pmatrix}a&1\\c&d\end{pmatrix}$. Since we do not want diagonal or triangular matrices, this means that $c\neq0$. Wanting our representations to be irreducible, we also have at least one of $a$ or $d$ are nonzero.
\begin{prop} Any irreducible dimension 2 representation of $\NB_2,\NB_3$ or $\NB_4$ is isomorphic to one of those forms in Table \ref{reptable}.\end{prop}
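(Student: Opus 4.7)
The plan is to convert the setup into a small system of polynomial equations and then enumerate solutions case by case in $n$.

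First I would nail down the shape of the data. Since $\tau^{2n}=1$, $\rho(\tau)$ is diagonalisable, so the already-chosen basis is legitimate, and irreducibility forces $t_1\neq t_2$ (else $\rho(\tau)$ is a scalar and any eigenline of $\rho(\sigma_1)$ would be invariant). Rescaling $\rho(\sigma_1)$ by a nonzero scalar does not affect the relations (B1), (B2), (N1), (N2), so the normalisation $\rho(\sigma_1)=\begin{pmatrix}a & 1\\ c & d\end{pmatrix}$ with $c\neq 0$ is without loss of generality. Next, using (N1) I would compute
\[
\rho(\sigma_i)=\rho(\tau)^{i-1}\rho(\sigma_1)\rho(\tau)^{-(i-1)}=\begin{pmatrix}a & (t_1/t_2)^{i-1}\\ c\,(t_2/t_1)^{i-1} & d\end{pmatrix},
\]
so that every $\rho(\sigma_i)$ is determined by $a,c,d$ and the single ratio $q:=t_1/t_2$.

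Then I would extract the ``cyclic'' constraint. Since the indices in (N1) are taken modulo $n$, we need $\rho(\tau)\rho(\sigma_n)\rho(\tau)^{-1}=\rho(\sigma_1)$; comparing off-diagonal entries gives $q^n=1$. Together with $t_i^{2n}=1$ coming from (N2), this pins down a small finite list of $(t_1,t_2)$ for each $n\in\{2,3,4\}$: for $n=2$ one gets $q=-1$; for $n=3$ one gets $q$ a primitive cube root of unity; for $n=4$ one gets $q\in\{-1,\pm i\}$. In each case there are only finitely many pairs $(t_1,t_2)$ to consider (up to the global scaling of $\tau$ by a root of unity $\mu$ with $\mu^{2n}=1$, which I would record as a parameter in the final table).

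With $(t_1,t_2)$ parametrised, I would impose (B1), i.e.\ the single braid equation $\rho(\sigma_1)\rho(\sigma_2)\rho(\sigma_1)=\rho(\sigma_2)\rho(\sigma_1)\rho(\sigma_2)$. Written out, this is four polynomial equations in $a,c,d$ with coefficients that are monomials in $q$; using $q^n=1$ they reduce to a very small system. This step is essentially the same calculation already used to classify $2$-dimensional $\B_n$ representations (cf.\ \cite{sysoeva,LRagt}); the novelty is only that $q$ is now one of the specific roots of unity listed above, which tends to eliminate whole families of solutions and leave only those compatible with the necklace structure. For $n=4$ I additionally have to impose (B2) in the form $\rho(\sigma_1)\rho(\sigma_3)=\rho(\sigma_3)\rho(\sigma_1)$, which with $q^2=\pm 1$ becomes a single quadratic relation on $a,c,d$; Lemma \ref{relationslemma} guarantees that no further relations need to be checked.

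Finally I would discard the solutions whose corresponding representation is reducible (these will appear where $\rho(\sigma_1)$ becomes simultaneously triangularisable with $\rho(\tau)$, forcing $c=0$ or $a=d$ with a further eigenvector condition) and collect the remaining tuples $(a,c,d,t_1,t_2)$ into the table. The main obstacle is purely bookkeeping: for $n=4$, the combination of (B1) and (B2) with $q\in\{-1,\pm i\}$ produces several branches that need to be simplified consistently, and one must verify that distinct parameter choices in the table are genuinely nonisomorphic (most easily by tracking $\tr\rho(\sigma_1)$, $\det\rho(\sigma_1)$ and the spectrum of $\rho(\tau)$, which together form a complete invariant in this low-dimensional setting).
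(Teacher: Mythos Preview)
Your approach is essentially the paper's: the paper supplies only the setup paragraph preceding the proposition (diagonalise $\rho(\tau)$, exclude $t_1=t_2$ and triangular $\rho(\sigma_1)$ via irreducibility, normalise the $(1,2)$-entry of $\rho(\sigma_1)$ to $1$) and then records the table without further argument, so your outline is exactly the implied computational enumeration via (N1), (B1) and, for $n=4$, (B2).

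One small correction: the normalisation $b=1$ should be obtained by conjugating by a diagonal matrix (which preserves the diagonal form of $\rho(\tau)$ and the isomorphism class), not by multiplying $\rho(\sigma_1)$ by a scalar. The latter produces a genuinely non-isomorphic representation in general (it is a twist by a one-dimensional character on the $\sigma_i$), so it is not ``without loss of generality'' for classification up to isomorphism---though it happens to lead to the same normal form here. Your constraint $q^n=1$ is correct and can be phrased more cleanly: $\tau^n$ is central in $\NB_n$, so Schur forces $\rho(\tau)^n=\pm I$. Finally, the proposition does not assert pairwise non-isomorphism of table entries, so your last verification step is unnecessary.
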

\begin{table}
$\begin{array}{|c|c|c|}\hline
\rho(\sigma_1) & \rho(\tau) & \text{restrictions} \\
 & & \\\hline\hline
 & & \\
\begin{pmatrix}a&1\\a^2-ad+d^2&d\end{pmatrix} & \begin{pmatrix}-t_2&0\\0&t_2\end{pmatrix} & a\neq d \\
 & &\\\hline
 & & \\
\begin{pmatrix}a&1\\-a^2+ad-d^2&d\end{pmatrix} & \pm\begin{pmatrix}1&0\\0&i\end{pmatrix},  \pm\begin{pmatrix}i&0\\0&1\end{pmatrix}& a\neq d, n=2\\
 
 & & \\\hline
 & & \\
\begin{pmatrix}a&1\\\frac{-1}{2}(a^2-ad+d^2)&d\end{pmatrix} & \pm\begin{pmatrix}e^{\pm i \frac{\pi}{3}}&0\\0&1\end{pmatrix}, \pm\begin{pmatrix}1&0\\0&e^{\pm i\frac{\pi}{3}}\end{pmatrix}, & a\neq d, n=3 \\
 & \pm\begin{pmatrix}e^{i\frac{2\pi}{3}}&0\\0&e^{i\frac{\pi}{3}}\end{pmatrix}, \pm\begin{pmatrix}e^{i\frac{\pi}{3}}&0\\0&e^{i\frac{2\pi}{3}}\end{pmatrix}, & \\
 & & \\\hline
 & &  \\
\begin{pmatrix}\omega d&1\\ c &d\end{pmatrix} & \pm\begin{pmatrix}1&0\\0&e^{\pm i\frac{2\pi}{3}}\end{pmatrix}, \pm\begin{pmatrix}e^{\pm i\frac{2\pi}{3}}&0\\0&e^{\mp i\frac{2\pi}{3}}\end{pmatrix} & d\neq0, c\neq\omega d^2, n=3\\
&&\\
\hline\end{array}$ \caption{Dimension $2$ Representations of $\NB_n$, $2\leq n\leq 4$.  Here $\omega$ is a primitive $3$rd root of unity. \label{reptable}}
\end{table}

\section{Representations from Topological Physics}
The category-theoretic approach to TQFTs as well as its statistical-mechanical predecessor suggest that similar techniques should yield physically relevant necklace braid group representations.  Here we discuss some related ideas from braided fusion categories and spin chain models that to illustrate this direction.

\subsection{Representations from Braided Fusion Categories}
\label{ss:BFC}

From any (unitary) braided fusion category $\CC$ Walker and Wang \cite{ww12} construct a $(3+1)$-TQFT.  In the extreme case of modular $\CC$ their construction is degenerate.  In the other extreme case of a symmetric braided fusion category of the form $\Rep(G)$ for a finite group $G$ one recovers the Dijkgraaf-Witten theory \cite{DijkgraafPasquierRoche}.  The most interesting case is neither symmetric nor modular.  As we expect a $(3+1)$TQFT to provide representations of motion groups of links in $\R^3$ or $S^3$, we briefly explore this possibility directly from the categorical perspective.

Given a braided fusion category $\CC$ we obtain 
from any object $X$ 
a representation of $\B_n$ on $\End(X^{\otimes n})$, via the braiding $c_{X,X}$
(we use the notation of \cite{RW}, and assume $\CC$ is strictly associative for notational convenience).  More specifically, the map $\sigma_i\mapsto \one_X^{\otimes i-1}\otimes c_{X,X}\otimes \one^{\otimes n-i-1}$ defines a group homomorphism $\B_n\rightarrow \Aut(X^{\otimes n})$, which we may regard as a \emph{categorical} representation of $\B_n$.  If $\{X_i\}$ is a complete set of representatives of the isomorphism classes then $\Aut(X^{\otimes n})$ acts faithfully on $\mathcal{H}_{X,n}:=\bigoplus_i\Hom(X_i,X^{\otimes n})$ by composition, which then gives us an honest linear representation $(\rho_X,\mathcal{H}_{X,n})$ of $\B_n$.  Are there categorical representations of $\NB_n$?  That is, group homomorphisms $\NB_n\rightarrow\Aut(Y(n))$ for some objects $Y(n)$ depending on $n$?
As the representations $(\rho_X,\mathcal{H}_{X,n})$ are completely reducible we may extend them to $\NB_n$ in the standard way as in Theorem \ref{stdexn} by rescaling the image of $\gamma$.    
It would be interesting to find categorical non-standard extensions. 

One cannot expect categorical $\B_n$ representations to admit extensions to the loop braid group $\LB_n$.  Of course if the braiding $c_{X,X}$ is symmetric: $c_{X,X}^2=\one_{X\otimes X}$ then we can extend $(\rho_X,\mathcal{H}_{X,n})$ to $\LB_n$ by $s_i\rightarrow \sigma_i$.  However, the map $\sigma_i\mapsto s_i$ savagely reduces $\LB_n$ to the symmetric group $S_n$, so this is not an interesting choice.  
 The annular braid group $\CB_n$ does admit such representations as follows (c.f. \cite{OW}). Let $X,Y$ be objects in a braided fusion category, and set $$R_i=I_Y\ot I_X^{\ot i-1}\ot R_{X,X}\ot I_X^{\ot n-i-1}\in\End(Y\ot X^{\ot n})$$ for $1\leq i\leq n-1$, $\alpha_{Y,X}=(R_{X,Y}R_{Y,X}\ot I_X^{\ot n-1})$ and $T_{Y,X}=\alpha_{Y,X}R_1\cdots R_{n-1}.$ Then $\sigma_i\rightarrow R_i$ and $\tau\rightarrow T_{Y,X}$ satisfy (B1), (B2) and (N1) after defining $R_n=T_{Y,X}R_{n-1}T_{Y,X}^{-1}$.  Notice that (B1) and (B2) are immediate for small $i$ but we must check that $R_{n-1}R_nR_{n-1}=R_nR_{n-1}R_n$ and $R_1R_nR_1=R_nR_1R_n$ and the far-commutation for $R_n$ with $R_2,\ldots,R_{n-2}$.  Observing that $\alpha_{Y,X}$ commutes with $R_j$ for $2\leq j\leq n$, relation (N1) can be seen in the following way:
\begin{eqnarray*} T_{Y,X}R_i&=&\\\alpha_{Y,X} R_1\cdots R_{i-1}R_iR_{i+1}R_i\cdots R_{n-1}&=&
\alpha_{Y,X} R_1\cdots R_{i-1}R_{i+1}R_{i}R_{i+1}\cdots R_{n-1}\\ =\alpha_{Y,X}R_{i+1} R_1\cdots R_{i-1}R_{i}R_{i+1}\cdots R_{n-1}&=&
R_{i+1}\alpha_{Y,X}R_1\cdots R_{n-1}\\&=&R_{i+1}T_{Y,X}.
\end{eqnarray*}
For such a representation to factor over $\NB_n$ one needs $(T_{Y,X})^{2n}=\one_{Y\ot X^{\ot n}}$.  Of course if this is required for all $n$ then $T_{Y,X}^2=\one_{Y,X}$--which is precisely the condition that $X$ and $Y$ centralise each other in the sense of M\"uger \cite{Mug}.  If $Y$ is transparent \cite{Brug} then this condition is satisfied for all $X$.  Indeed, in \cite{CTW} a 3 loop configuration with fermion interactions is considered, which would be a particular case of this set up.

\begin{example} Consider the Ising braided fusion category with simple objects $\unit$, $\psi$ and $\sigma$.  Setting $Y=\psi$ (the Majorana fermion) and $X=\sigma$ (the Ising anyon) we find that $c_{X,Y}c_{Y,X}=-\one_{Y\ot X}$ where $Y\ot X\cong\sigma$ so that $\one_{Y\ot X}\in \End(\sigma)\cong\C$.  In particular $\alpha_{Y,X}=-\one_{Y\ot X^{\ot n}}$, and we can easily adjust the scalar to ensure $(T_{Y,X})^{2n}=\one$. Since $Y\ot X^{\ot n}\cong X^{\ot n}$ this does not provide a very interesting example.
\end{example}

\medskip

\subsection{Necklaces and spin chains}\label{ss:XXZ}

Beautiful classes of representations of $\B_n$ are constructed in statistical mechanics, both from open spin chains and from transfer matrices. 
Drawn suitably, a necklace has some similarity with a `thickened' 
{\em periodic} 
spin chain.
\[
\includegraphics[width=4in]{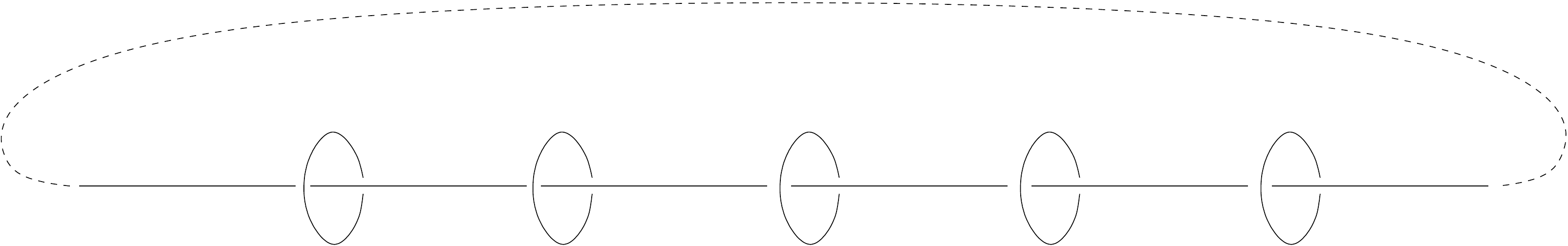}
\]
The main superficial difference with $\B_n$ 
is that there is no 
algebraic way to make this chain open. 
This in turn corresponds to the fact that, unlike the natural inclusion of $\B_{n-1}$ in $\B_n$, and similarly for $\LB_n$, 
$\NB_{n-1}$ does not include in an analogous way in $\NB_n$.

Consider for example the $n$-site XXZ spin chain \cite{LiebMattis}. 
Here we associate the leapfrog motion $\sigma_i$ to the local spin-spin interaction and, formally, the motion $\tau$ to the periodic symmetry of the chain. Naively this gives $\tau^n=1$. But the physical system may be endowed with what is sometimes called a cohomology seam,
generally localised in a boundary condition, (cf. e.g. \cite{BaxterTemperleyAshley,MartinSaleur}) so that the image of $\tau$ may generate groups of various orders.

Let us start by using this construction to discuss Theorem~\ref{stdexn} a little more. 
We proceed as follows. 
Firstly, a neat way to construct the spin chain itself is from 
an XXZ TQFT (cf. e.g. 
\cite{LiebMattis,Jimbo,Drinfeld,Martin91,wang10}
--- the `bordisms' here are plane-embedded 1-manifolds) 
with parameter $q=t^2$. 
This system is monoidally generated by 
\newcommand{\uu}{{\mathsf u}}
\[
\uu = 
\raisebox{-.0651in}{
\includegraphics[width=1.2991cm]{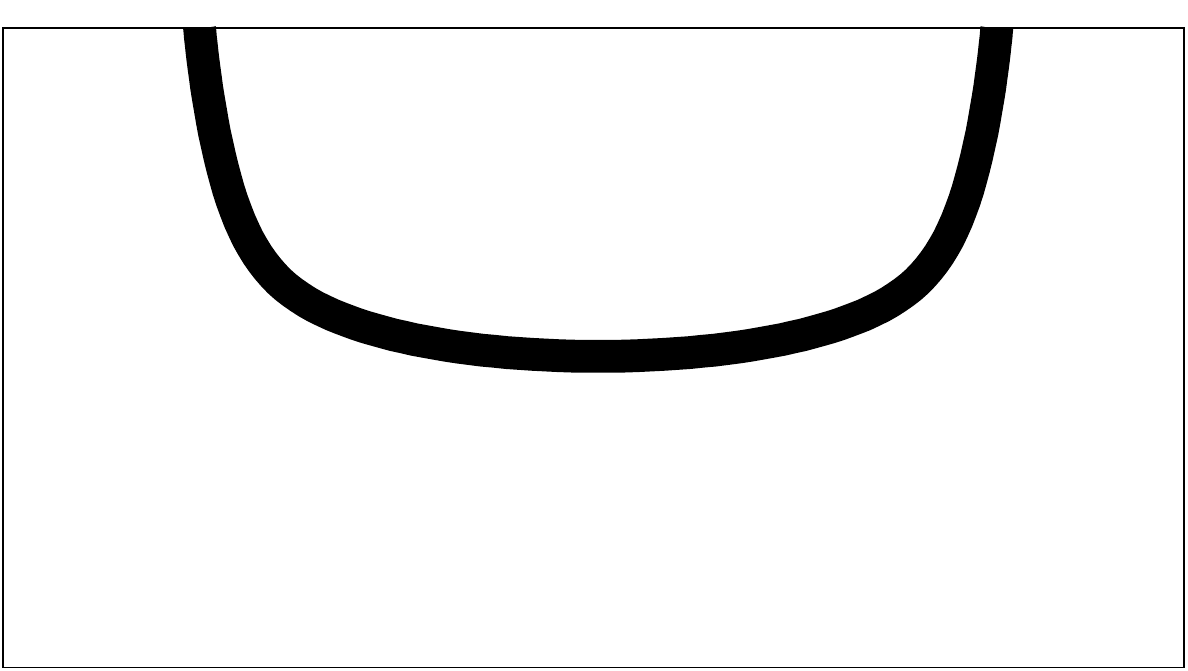}
}
\mapsto (0,-t,t^{-1}, 0)
\hspace{.4321in}
\mbox{and}
\hspace{.4321in}
1_1 = 
\raisebox{-.07651in}{
\includegraphics[width=.76591cm]{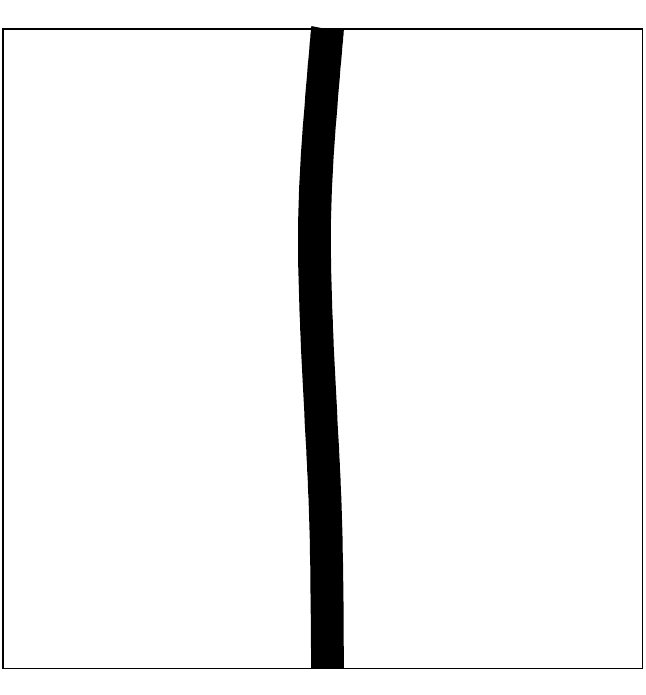}
}
\mapsto \mat{cc} 1&0 \\ 0&1 \tam
\]
Setting $U = \uu^t \uu$ (hereafter we will simply identify elements with their images in the category $Vect$ as above,
so that $\uu^t$ means transpose); 
and, for $n$ given, 
setting $U_i$ to be $U$ localised in the $i$-th position in an $n$-fold  tensor product in the usual way, we recall 
firstly that 
\[
H = \sum_{i=1}^{n-1} U_i
\]
is the `open' XXZ Hamiltonian,
and also that 
\beq \label{eq:TLbraid}
g_i = 1 - q U_i
\eq
obeys the braid relation. 
Here we write $\rho$ for this representation
(notationally suppressing the dependencies on $q$ and $n$). 

For $n=2$ there is nothing to check here, and yet it is not quite trivial viewed in the context of Theorem~\ref{stdexn}.
Factoring through \eqref{eq:TLbraid} and $\rho$,
the image of 
$\gamma^{n=2}$  
is simply 
\[
\rho( g_1^2 ) = \mat{cccc} 
1 \\
& 1-q^2 +q^4 & -q(1-q^2 ) \\
&-q(1-q^2)&q^2& \\
&&&1 \tam
\]
with eigenvalues $1,q^4$.
If $q^4 \neq 1$ or $q^2 =1$ then the Theorem applies directly
(and gives a very uninteresting representation).
If $q =i$, however, the Theorem does not apply directly
(all eigenvalues are 1 but the matrix is not the identity matrix, so the representation 
of $\B_n$
is not completely reducible
--- indeed it contains a representation isomorphic to that in Remark~\ref{rem:indec}, for $n=2$, as a direct summand).
(The case of $q=i$ exhibits non-complete-reducibility
for general even $n$. We start with $n=2$ to postpone
unhelpfully non-trivial algebra.)
We note for future reference that the `uninteresting' representation with $q^4 = -1$ (to which the Theorem formally does apply) obeys 
$\rho(\tau^{2n}  )  = g_1^4 = 1$ 
{\em without need of any renormalisation}. 
Let us call this eventuality 
($D=1$ in  Theorem~\ref{stdexn}) a 
{\em flat} standard extension.

In general the problem of computing the spectrum of $\gamma^n$ here is a kind of elementary integrable system 
(see e.g. \cite{Martin91}). By centrality it acts by a scalar on each indecomposable summand of tensor space.
These summands are indexed by the integers $l$ congruent to $n$ mod.2 (the XXZ charge or  number of propagating lines
as 
for example in \cite{Baxter}), so the complete spectrum is given by
%
\[
\xymatrix@R=2pt{
n\setminus l
   & 0    & 1    & 2     & 3      & 4      & 5    & 6  \\
 0 & 1                                      \\
 1 &      & 1                               \\
 2 & q^4  &        & 1                       \\
 3 &       & q^6    &      & 1                   \\
 4 & q^{12} &        & q^8   &        & 1          \\
 5 &       & q^{16}  &       & q^{10}  &       &  1  \\
 6 & q^{24} &        & q^{20} &        & q^{12} &   & 1 \\
 7 &       & q^{30}  &       & q^{24}  &       &q^{14} &&1
}
\]
(the general pattern for larger $n$ will be clear). 
A necessary condition for $\rho( \gamma^{2n} ) =1$ is,
of course, that 
the only eigenvalue is 1.
We see that for this to be true for all $n$ 
(indeed for it to be true for $n=5$) we require 
$q^4 = 1$. 
(And to be true for all even $n$ we require $q^8 =1$.)
The case $q^2 =1$ factors through the symmetric group, so the more interesting case is $q^2 = -1$. 

\begin{prop}
Setting $q^2 =-1$ then 
(a) for all odd $n$,
$\rho$ gives a representation of $\NB_n$ via $\rho(\tau)=\rho(\gamma)$ (i.e. with trivial $D$). 
Furthermore $\rho(\tau^n) \neq 1$.
(b) for even $n$ we never get a representation this way,
i.e. we do not get a flat standard extension.
\end{prop}
\begin{proof}
(a) For odd $n$ these representations of $\B_n$ are completely reducible by
\cite[\S7.3 Th.2]{Martin91}, and every eigenvalue of $\rho(\gamma^{2n})$ takes the form $q^{4m}$ for some $m$.
Meanwhile the spectrum of $\rho(\gamma^n)$ always contains 
1 and $q^{2m}$ with $m$ odd.
\\
(b) For even $n$ these representations are not completely reducible (they are faithful on a non-semisimple quotient algebra). It remains to show in particular that $\rho(\gamma^{2n})$ has a non-trivial Jordan form.
To see this note that $j=U^{\otimes n/2}$ is both central 
and radical in the quotient algebra; that $\gamma$
expressed in the bordism basis contains  $j$ with non-vanishing coefficient; and that this holds also for any power of $\gamma$.
\end{proof}

%

The difference between odd and even cases is well-known in the XXZ setting, but intriguing here. It again suggests to use a cohomology seam  in the manner of
\cite{BaxterTemperleyAshley,MartinSaleur}.

Closed boundary conditions have been studied extensively
in the XXZ and indeed the wider spin chain setting
(see e.g. \cite{McKay} for recent references). 
The seam approach extends the 
TQFT by an operator that acts only on the first position
--- represented in `bordisms' by a blob
\cite{MartinSaleur,Doikou}.
A simple example of this is
\newcommand{\ff}{{\mathsf f}} 
\[
\ff = 
\raisebox{-.1in}{
\includegraphics[width=.3in]{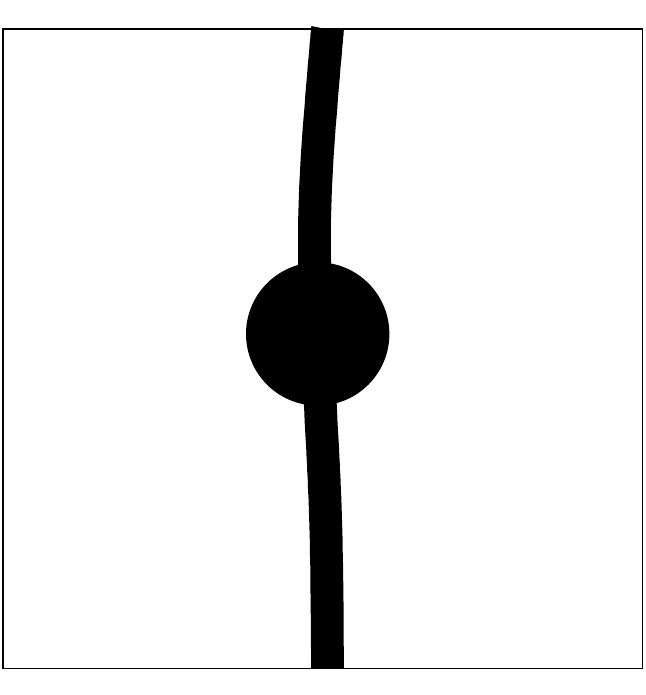}}
\mapsto 
\frac{1}{a+a^{-1}} \mat{cc} a & 1 \\ 1&a^{-1} \tam
\]
(always localised in the first position in the tensor product).
Such an extension introduces a new `boundary' parameter into the chain, normally given \cite{MartinSaleur} 
by the value of the 
`topological loop' scalar
\beq
y_\ff = \uu (\ff \otimes 1_1 ) \uu^t
=
\raisebox{-.2in}{
\includegraphics[width=.5in]{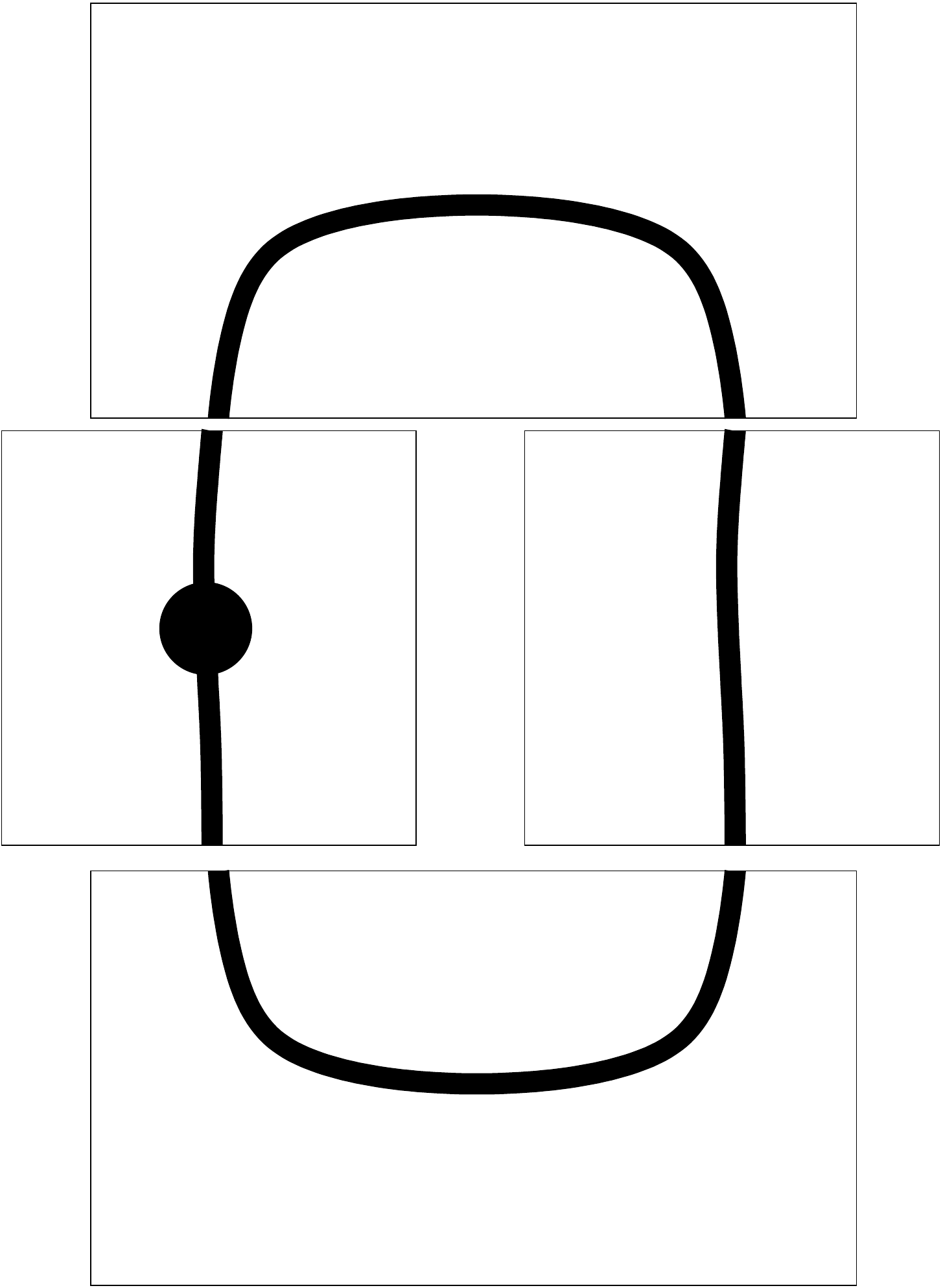}}={\frac {{a}{t}^{2}+(at^2)^{-1}}{ \left( {a}+a^{-1} \right) }}
\eq
(N.B. bottom-to-top stacking convention).
Such an extension then allows the construction of a 
`seamed' braid translation operator
\newcommand{\BB}{\beta}
\beq \label{eq:seam}
\BB = (1+x\ff ) \gamma 
\eq
where $x=\frac{q - q^{-1}}{q^{-1} - y_\ff}$.
Just as with $\gamma$ itself in \eqref{eq:sigman} 
we define 
$ g_0 =  \BB g_{n-1} \BB^{-1}$
whereupon
\beq \label{eq:bgb}
\BB g_i \BB^{-1} = g_{i+1}
\eq
with indices understood periodically
\cite{MartinSaleur}.
Indeed this holds not only in $\rho$ but in the setting
of the algebra $b_n(q,a)$ of abstract generators
$g_i $ and $\ff$.
Thus we have the following.

\begin{theorem}
Applying 
the $D$-matrix method of 
Theorem~\ref{stdexn} to the generators $g_i$ and the braid translator $\BB$ we obtain a two-parameter representation of $\NB_n$ whenever the 
corresponding complete reducibility condition is satisfied. The reducibility condition is satisfied, for example, when the parameters $q,a$ are indeterminate --- i.e.  
on a Zariski open subset of parameter space.
\end{theorem}

\begin{proof}
By \eqref{eq:bgb} $\zeta\mapsto\BB$ gives a representation of $\CB_n$.
Since $\zeta^n$ is central in $\CB_n$ the proof of
Theorem~\ref{stdexn} generalises.
The Zariski open property follows from \cite{MartinSaleur}.
\end{proof}

The system required to determine the eigenvalues of $\BB^n$,
and hence determine the $D$ matrix explicitly,
is substantially more involved than the $\gamma$ case above.
We will discuss this elsewhere.



\end{document}